% ------------------------------------------------------------------------
% SampleBirk.tex for Birkart.cls*************************************************
% ------------------------------------------------------------------------
%%%%%%%%%%%%%%%%%%%%%%%%%%%%%%%%%%%%%%%%%%%%%%%%%%%%%%%%%%%%%%%%%%%%%%%%%
\documentclass{amsart}
\usepackage{amsmath}
\usepackage{amssymb}
\usepackage{amsfonts}
\usepackage{graphicx}
\usepackage{texdraw}
\usepackage{graphpap}
\usepackage{wasysym}
\usepackage{pxfonts}
\usepackage[OT2,T1]{fontenc}

\DeclareSymbolFont{cyrletters}{OT2}{wncyr}{m}{n}
\DeclareMathSymbol{\berd}{\beta}{cyrletters}{"42}

\setlength{\textwidth}{148.67truemm} \linespread{1.05}
\setlength{\textheight}{200.0truemm}
\setlength{\evensidemargin}{4.5truemm}
\setlength{\oddsidemargin}{4.5truemm} \setlength{\topmargin}{0pt}

\input txdtools

% THEOREM Environments (Examples)------------------------------------------
 \newtheorem{thm}{Theorem}[section]
 \newtheorem{cor}[thm]{Corollary}
 \newtheorem{lem}[thm]{Lemma}
 \newtheorem{prop}[thm]{Proposition}
 \theoremstyle{definition}
 \newtheorem{defn}[thm]{Definition}

 \theoremstyle{remark}
 
 \newtheorem{rem}[thm]{Remark}

\numberwithin{equation}{section} \numberwithin{figure}{section}

% NEW COMMANDS-------------------------------------------------------------

\newcommand{\proj}{{P}}

\renewcommand{\d}{{\partial}}
\newcommand{\dbar}{{\overline{\partial}}}
\newcommand{\e}{\mathrm e}
\newcommand{\wh}{\widehat}
\newcommand{\dprime}{{\prime\prime}}
\newcommand{\1}{{\mathbf 1}}

\newcommand{\C}{{\mathbb C}}
\newcommand{\D}{{\mathbb D}}

\newcommand{\parT}{\rho}
\newcommand{\T}{{\mathbb T}}
\newcommand{\R}{{\mathbb R}}

\newcommand{\Z}{{\mathbb Z}}

\newcommand\coity{{\mathcal C \sb 0 \sp \infty}}

\newcommand{\esssup}{\operatorname{\text{\rm ess\,sup}\,}}
\newcommand{\essinf}{\operatorname{\text{\rm ess\,inf}\,}}

\newcommand{\calH}{{\mathcal H}}

\newcommand{\calC}{{\mathcal C}}

\newcommand{\calO}{{\mathcal O}}
\newcommand{\dA}{{\diff A}}

\newcommand{\calP}{{\mathcal P}}

\newcommand{\setS}{{\mathcal S}}
\newcommand{\setSp}{{\Sigma}}
\newcommand{\setX}{{X}}

\newcommand{\adiag}{{\overline{\text{\rm diag}}}}
\newcommand{\bfun}{{b}}
\newcommand{\cauchy}{C}
\newcommand{\berez}{\mathfrak B}
\newcommand{\diffP}{\diff P}
\newcommand{\Iop}{I}
\newcommand{\Pop}{P}
\newcommand{\Dop}{\nabla}

\newcommand\bpar{{M_1}}

\newcommand{\re}{\operatorname{Re}}

\newcommand{\wt}{\widetilde}

\newcommand{\pspace}{{\mathcal P}}

\newcommand{\Hspace}{{H}}

\newcommand{\diff}{{\mathrm d}}
\newcommand{\imag}{{\mathrm i}}

\newcommand{\dist}{\operatorname{dist\,}}
\newcommand{\supp}{\operatorname{supp}}

\newcommand{\pv}{\operatorname{p.v.}}

\newcommand{\SH}{{\rm SH}}
\newcommand{\eps}{\varepsilon}

\newcommand{\Tfun}{{\varrho}}
\newcommand{\Qext}{\psi}
\newcommand{\apar}{{M_0}}

\def\lpar{\left (}
\def\rpar{\right )}
\def\labs{\left |}
\def\rabs{\right |}
\def\babs#1{\labs {#1} \rabs}

\begin{document}
%---------------------------------------------------------------------------
%Insert here the title, affiliations and abstract:
%
\title[Polynomial Berezin transform]
{Berezin transform in polynomial Bergman spaces}

%----------Author 1
\author[Ameur]
{Yacin Ameur}
\address{Yacin Ameur\\ Department of Mathematics\\
Uppsala University\\
\\
Box 480\\
751 06 Uppsala\\
Sweden}

\email{yacin.ameur@gmail.com}

%----------Author 2
\author[Hedenmalm]
{H\aa{}kan Hedenmalm}

\address{H{\aa}kan Hedenmalm\\ Department of Mathematics\\
The Royal Institute of Technology\\
S -- 100 44 Stockholm\\
SWEDEN}

\email{haakanh@math.kth.se}

\thanks{Research supported by the G\"oran Gustafsson Foundation. The third author is supported by N.S.F. Grant No. 0201893.}

%# Added author
\author[Makarov]
{Nikolai Makarov}

\address{Nikolai Makarov\\ Department of Mathematics\\ California Institute of
Technology\\ Pasadena\\ CA 91125\\ USA}

\email{makarov@caltech.edu}

%----------classification, keywords, date
%\subjclass{Primary 35B65, 30C35; Secondary 30C55,30C85}

%\keywords{Berezin quantization, polynomial Bergman kernels}

\begin{abstract} We consider fairly general weight functions
$Q:\C\to \R$, and let
$K_{m,n}$ denote the reproducing kernel for the space
$\Hspace_{m,n}$ of analytic polynomials $u$ of degree at most $n-1$
of finite norm $\|u\|_{mQ}^2=\int_\C\babs{u(z)}^2\e^{-mQ(z)}\dA(z),$
$\dA$ denoting suitably normalized area measure in $\C$. For a
continuous bounded function $f$ on $\C$, we consider its
(polynomial) Berezin transform
\begin{equation*}\berez_{m,n}f(z)=\int_\C f(\zeta)\diff
B^{\langle z\rangle}_{m,n}(\zeta) \qquad\text{where}\qquad \diff
B^{\langle z\rangle}_{m,n}(\zeta)=\frac {\babs{K_{m,n}(z,\zeta)}^2}
{K_{m,n}(z,z)}\e^{-mQ(\zeta)}\dA(\zeta).
\end{equation*}
Let $\setX=\{\Delta Q>0\}.$ For a parameter $\tau>0$ we prove that
there exists a compact subset $\setS_\tau$ of $\C$ such that
\begin{equation}\label{berconv}\berez_{m,n}f(z)\to f(z)\qquad \text{as}
\quad m\to\infty\quad \text{and}\quad n- m\tau\to 0,\end{equation}
for all continuous bounded $f$ if $z$ is in the interior of
$\setS_\tau\cap \setX$. Equivalently, the measures $B_{m,n}^{\langle
z\rangle}$ converge to the Dirac measure at $z$. The set
$\setS_\tau$ is the coincidence set for an associated obstacle
problem.

We also prove that the convergence in \eqref{berconv} is
\textit{Gaussian} when $z$ is in the interior of
$\setS_\tau\cap\setX$, in the sense that with $F(\zeta)=f\lpar
\sqrt{m\Delta Q(z)}(\zeta-z)\rpar$, we have
\begin{equation*}
\berez_{m,n}F(z)\to \int_\C f(\zeta)\diffP(\zeta),
\end{equation*}
$\diffP(\zeta)=\e^{-\babs{\zeta}^2}\diff A(\zeta)$ denoting the
standard Gaussian.

In the "model case'' $Q(z)=\babs{z}^2$, $\setS_\tau$ is the closed
disk with centre $0$ and radius $\sqrt{\tau}$. We prove that if $z$
is fixed with $\babs{z}>\sqrt{\tau}$, the corresponding measures
$B_{m,n}^{\langle z\rangle}$ converge to harmonic measure for $z$
relative to the domain $\C^*\setminus\setS_\tau$, $\C^*$ denoting
the extended plane.

Our auxiliary results include $L^2$ estimates for the
$\dbar$-equation $\dbar u=f$ when $f$ is a suitable test function
and the solution $u$ is restricted by a growth constraint near
$\infty$.

Our results have applications e.g. to the study of weighted
distributions of eigenvalues of random normal matrices. In the companion paper \cite{AHM} we consider such applications, e.g. a proof of Gaussian field convergence
for fluctuations of linear statistics of eigenvalues of random normal matrices from the ensemble associated with $Q$.
\end{abstract}

%%% ----------------------------------------------------------------------
\maketitle
%%% ----------------------------------------------------------------------

\addtolength{\textheight}{2.2cm}

\section{Introduction} \label{intro}

\subsection{General introduction to Berezin quantization.}
\footnote{In this version we have but slightly modified the formulation of Theorem 2.8 and made some minor corrections.}

In a version of quantum theory, a Bargmann-Fock type space of
polynomials plays the role of the quantized system, while the
corresponding weighted $L^2$ space is the classical analogue. It is
therefore natural to study the asymptotics of the quantized system
as we approach the semiclassical limit. A particularly useful object
is the {\em reproducing kernel} of the Bargmann-Fock type space. To
make matters more concrete, let $\mu$ be a finite positive Borel
measure on $\C$, and $L^2(\C;\mu)$ the usual $L^2$ space with inner
product
$$\big\langle f,g\big\rangle_{L^2(\C;\mu)}=\int_\C f(z)\overline{g(z)}
\diff\mu(z).$$ The subspace of $L^2(\C;\mu)$ of entire functions is
the Bergman space $A^2(\C;\mu)$.

We assume that the support of $\mu$ has infinitely many points so
that any given polynomial corresponds to a unique element of
$L^2(\C;\mu)$, and write
$$J_\mu=\sup\big\{j\in\Z;\,
\int_\C\babs{z}^{2(j-1)}\diff\mu(z)<\infty\big\}.$$ Since $\mu$ is
finite, we are ensured that $1\le J_\mu\le \infty$.

Let $\pspace_n$ be the set of analytic polynomials of degree at most
$n-1$, and write
\begin{equation*}A^2_{\mu,n}=L^2(\C;\mu)\cap
\pspace_n\subset L^2(\C;\mu).\end{equation*} Putting
$n'=\min\{n,J_\mu\}$,
it is thus seen that $A^2_{\mu,n}$ equals $\pspace_{n'}$ in the
sense of sets, with the norm inherited from $L^2(\C;\mu)$. Hence
$A^2_{\mu,n}=A^2_{\mu,n'}$ for all $n$, and the reader who so
desires can without loss of generality assume that $n=n'$ in the
following.

Let $e_1,\ldots, e_{n'}$ be an orthonormal basis for $A^2_{\mu,n}$.
The reproducing kernel $K_{\mu,n}$ for the space $A^{2}_{\mu,n}$ is
the function
\begin{equation*}
K_{\mu,n}(z,w)=\sum_{j=1}^{n'}e_j(z)\overline{e_j(w)}.
\end{equation*}
Then $K_{\mu,n}$ is independent of the choice of
an orthonormal basis
for $A^2_{\mu,n}$, and it is characterized by the
properties that $z\mapsto K_{\mu,n}(z,z_0)$ is in $A^2_{\mu,n}$ and
\begin{equation}\label{reprod}u(z_0)=
\big\langle u,K_{\mu,n}(\cdot,z_0)\big\rangle_{L^2(\C;\mu)} =\int_\C
u(z) \overline{K_{\mu,n}(z,z_0)} \diff \mu(z), \qquad u\in
A^2_{\mu,n},\quad z_0\in\C.\end{equation}

For a given complex number $z_0$ we now consider the measure
\begin{equation}\label{bm}\diff
B^{\langle z_0\rangle}_{\mu,n}(z)=\frac {\babs{K_{\mu,n}(z,z_0)}^2}
{K_{\mu,n}(z_0,z_0)}\diff\mu(z),\end{equation} which we may call the
\textit{Berezin measure} associated with $\mu$, $n$ and $z_0$. The
reproducing property \eqref{reprod} applied to
$u=K_{\mu,n}(\cdot,z_0)$ implies that
$B_{\mu,n}^{\langle z_0\rangle}$ is a probability measure. In
classical physics,
$B^{\langle z_0\rangle}_{\mu,n}$ corresponds to a
point mass at $z_0$, so our interest focuses on how closely this
measure approximates the point mass.

\subsection{Weights.} By a \textit{weight} we mean a
measurable function $\phi:\C\to \R$ such that the measure
\begin{equation*}\diff\mu_\phi(z)=\e^{-\phi(z)}\diff A(z),\end{equation*}
is a finite measure on $\C$, where $\diff A(z)=\diff x\diff y/\pi$
with $z=x+\imag y$.

We write $L^2_\phi$ for the space $L^2(\C;\mu_\phi)$ and $A^2_\phi$
for $A^2(\C;\mu_\phi)$ and the norm of an element $u\in L^2_\phi$
will be denoted by
\begin{equation*}\|u\|_\phi^2=\int_\C\babs{u}^2\e^{-\phi}\dA.\end{equation*}
For a positive integer $n$, we frequently write
\begin{equation}\label{ospaces}L^2_{\phi,n}=\big\{u\in L^2_\phi;\,
u(z)=\calO\lpar\babs{z}^{n-1}\rpar\quad \text{when}\quad
z\to\infty\big\}\end{equation} and
\begin{equation}\label{os2}A^2_{\phi,n}=L^2_{\phi,n}\cap
A^2_\phi=L^2_{\phi,n}\cap\pspace_n.\end{equation} Observe that
$L^2_{\phi,n}$ usually is non-closed in $L^2_\phi$, whereas the
finite-dimensional $A^2_{\phi,n}$ is closed in $L^2_\phi$.

\subsection{The weights considered.} \label{weigh} Let $Q:\C\to \R$ be a given
measurable function, which satisfies a growth condition of the form
\begin{equation}\label{gro}Q(z)\ge \parT\log\babs{z}^2,
\qquad |z|\ge C,
\end{equation}
for some positive numbers $\parT$ and $C$.

For a positive number $m$, we now consider weights $\phi_m$ of the
form $\phi_m=mQ$. By abuse of notation, we will in this context
sometimes refer to $Q$ as the weight. To simplify notation, we set
\begin{equation*}\Hspace_{m,n}=A^2_{mQ,n}=L^2_{mQ}\cap\pspace_n\end{equation*}
and denote by $K_{m,n}$ the reproducing kernel for $\Hspace_{m,n}$.
For a given $z_0$, the corresponding $B_{m,n}^{\langle z_0\rangle}$
is defined accordingly,
cf. \eqref{bm}.

We think of $Q$ as being fixed while the parameters $m$ and $n$
vary, and also fix a number $\tau$ satisfying $0<\tau<\parT$.
To avoid bulky notation, it is customary to reduce the number of
parameters to one, by regarding $n=n(m)$ as a function of $m>0$.
We will
adopt this convention and study the behaviour of the measures
$B^{\langle z_0\rangle}_{m,n}$ when
$m\to\infty$ and $n-m\tau\to 0$.

Adding a real constant to $Q$ means that the inner product in
$\Hspace_{m,n}$ is only chanced by a positive multiplicative
constant, and $K_{m,n}$ gets multiplied by the inverse of that
number. Hence the corresponding Berezin measures are unchanged, and
we can for example w.l.o.g. assume that $Q\ge 1$ on $\C$ when this
is convenient.

\begin{rem} Replacing $Q$ by $\tau^{-1}Q$ one can assume that
$\tau=1$. The most important case (e.g. in random matrix theory)
occurs when $\tau=1$ and $m=n$. However, we shall take on the
slightly more general approach (with three parameters $m$, $n$ and
$\tau$, instead of just $n$) in this note.
\end{rem}

\subsection{A word on notation.} For real $x$, we write $]x[$ for
the largest integer
which is strictly smaller than $x$. We write frequently
$$\d_z=\frac{\d}{\d
z}=\frac1{2}\bigg(\frac{\d}{\d x}-\imag\frac{\d}{\d y}\bigg),\qquad
\dbar_z=\frac{\d}{\d \bar z}=\frac1{2}\bigg(\frac{\d}{\d
x}+\imag\frac{\d}{\d y}\bigg),\qquad z=x+\imag y,$$ and use
$\Delta=\Delta_z$ to denote the normalized Laplacian
$$\Delta_z=\d_z\dbar_z=\frac{1}{4}\bigg(
\frac{\d^2}{\d x^2}+\frac{\d^2}{\d y^2}\bigg).$$ We write
$\D(z_0;r)$ for the open disk $\{z\in\C;\,|z-z_0|<r\}$, and we
simplify the notation to $\D$ when $z_0=0$ and $r=1$. The boundary
of $\D(z_0,r)$ is denoted $\T(z_0,r)$. When $S$ is a subset of $\C$,
we write $S^\circ$ for the interior of $S$, and $\bar{S}$ for its
closure; the support of a continuous function $f$ is denoted by
$\supp f$. The symbol $A\subset B$ means that $A$ is a subset of
$B$, while $A\Subset B$ means that $A$ is a compact subset of $B$.
We write $\dist(A,B)$ for the Euclidean distance between $A$ and
$B$. The symbol \textit{a.e.} is short for "$\diff A$-almost
everywhere'', and \textit{p.m.} is short for "probability measure''.
Finally, we use the shorthand $L^2$ to denote the unweighted space
$L^2_0=L^2(\C;\dA)$.

\subsection{Random normal matrices and the Coulomb gas.} This paper is a slight reworking of a preprint from arxiv.org which was written
in 2007 and early 2008. During the work, it became convenient to collect, in a separate paper, some estimates needed for the paper \cite{AHM}, which was written simultaneously. More precisely, we wanted to make clear a variant of some results on asymptotic expansions of (polynomial) Bergman kernels which were known in a several variables context (\cite{BBS}, \cite{B}), as well as some uniform estimates in the off-diagonal case.
The relevant results for our
applications in \cite{AHM} are primarily theorems \ref{th3} and \ref{flock}. A discussion containing related estimates which hold near the boundary will appear in our later publication \cite{AHM2}.

\section{Main results}
\label{sec-main}

\subsection{Berezin quantization.} Fix a
non-negative
weight $Q$ and two positive numbers $\tau$ and $\parT$,
$\tau<\parT$, such that the growth condition \eqref{gro} is
satisfied, and form the
measures
\begin{equation*}\diff B^{\langle z_0\rangle}_{m,n}=
\berd^{\langle z_0\rangle}_{m,n}\dA\qquad \text{where}\qquad
\berd^{\langle z_0\rangle}_{m,n}(z)=\frac {\babs{K_{m,n}(z,z_0)}^2}
{K_{m,n}(z_0,z_0)}\e^{-mQ(z)}.\end{equation*} We shall refer to the
function $\berd_{m,n}^{\langle z_0\rangle}$ as the \textit{Berezin
kernel} associated with $m$, $n$ and $z_0$.

Let us now assume that $Q\in \calC^2(\C)$.
In the first instance, we ask for which $z_0$ we have convergence
\begin{equation*}B^{\langle z_0\rangle}_{m,n}\to \delta_{z_0}\qquad
\text{as}\quad
m\to\infty \quad \text{and}\quad n-m\tau\to0,\end{equation*} in the
sense of measures. In terms of the \textit{Berezin transform}
\begin{equation*}\berez_{m,n} f(z_0)=\int_\C f(z)\diff B^{\langle
z_0\rangle}_{m,n}(z),
\end{equation*}
we are asking whether
\begin{equation}\label{q2}
\berez_{m,n} f(z_0)\to f(z_0) \qquad \text{for all}\quad
f\in\calC_b(\C)\quad \text{as}\quad
m\to \infty \quad \text{and} \quad n-m\tau\to0.
\end{equation}

Let $\setX$ be the set of points where $Q$ is strictly subharmonic,
$$\setX=\{\Delta Q>0\}.$$
We shall find that there exists a compact set $\setS_\tau$ such that
\eqref{q2} holds for all $z_0$ in the interior of $\setS_\tau\cap
\setX$, while \eqref{q2} fails whenever $z_0\not\in \setS_\tau$.

To define $\setS_\tau$, we first need to introduce some notions from
weighted potential theory, cf. \cite{ST} and \cite{HM}. It is here
advantageous to slightly relax the regularity assumption on $Q$ and
assume that $Q$ is in the class $\calC^{1,1}(\C)$ consisting of all
$\calC^1$-smooth  functions with Lipschitzian gradients. We will
have frequent use of the simple fact that the distributional
Laplacian $\Delta F$ of a function $F\in \calC^{1,1}(\C)$ makes
sense almost everywhere and is of class $L^\infty_{{\rm loc}}(\C)$.

Let $\SH_\tau$ denote the set of all subharmonic functions $f:\C\to
\R$ which satisfy a growth bound of the form
\begin{equation*}f(z)\le\tau\log\babs{z}^2+\calO(1)\quad\text{as}
\quad z\to\infty.\end{equation*} The \textit{equilibrium potential}
corresponding to $Q$ and the parameter $\tau$ is defined by
\begin{equation*}\wh{Q}_\tau(z)=\sup\big\{f(z);\,\,f\in\SH_\tau
\quad\text{and}\quad f\le Q\quad\text{on}\quad
\C\big\}.\end{equation*} Clearly, $\wh{Q}_\tau$ is subharmonic on
$\C$. We now define $\setS_\tau$ as the coincidence set
\begin{equation}\label{coincidence}\setS_\tau=\{Q=\wh{Q}_\tau\}.
\end{equation}
Evidently, $\setS_\tau$ increases with $\tau$, and that $Q$ is
subharmonic on $\setS_\tau^\circ$.

We shall need the following lemma.

\begin{lem}
\label{drspock} The function $\widehat{Q}_\tau$ is of class
$\calC^{1,1}(\C)$, and $\setS_\tau$ is compact. Furthermore,
$\widehat{Q}_\tau$ is harmonic in $\C\setminus \setS_\tau$ and there
is a number $C$ such that $\widehat{Q}_\tau(z)\le
\tau\log_+\babs{z}^2+C$ for all $z\in\C.$
\end{lem}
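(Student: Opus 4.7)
The plan is to address the four assertions in sequence.

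\emph{Growth bound.} I would invoke a Phragm\'en--Lindel\"of argument uniform in the competitor. Choose $R>1$ with $Q(z)\ge \parT \log|z|^2$ on $\{|z|\ge R\}$, and set $M=\max_{|w|\le R}Q$ (finite by continuity of $Q$). For any admissible $f \in \SH_\tau$ with $f\le Q$, the function $f(z)-\tau\log|z|^2$ is subharmonic on $\{|z|>R\}$, bounded above at infinity by membership in $\SH_\tau$, and $\le M-\tau\log R^2$ on $\{|z|=R\}$. Subtracting $\eps\log(|z|/R)$, applying the maximum principle on $\{R<|z|<R'\}$ with $R'\to\infty$, and letting $\eps\to 0^+$ yields $f(z)\le \tau\log|z|^2+(M-\tau\log R^2)$ for $|z|\ge R$, uniformly in $f$. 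Together with $f\le M$ on $\{|z|\le R\}$ and passage to the supremum over $f$, this gives $\widehat Q_\tau(z)\le \tau\log_+|z|^2+C$.

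\emph{Compactness and subharmonicity.} The previous bound immediately yields $\widehat Q_\tau(z)<\parT\log|z|^2\le Q(z)$ for $|z|$ large, so $\setS_\tau$ is bounded. To promote $\widehat Q_\tau$ to a subharmonic function, I would appeal to Choquet's topological lemma: the upper semicontinuous regularization $\widehat Q_\tau^*$ is subharmonic, agrees with $\widehat Q_\tau$ off a polar set, satisfies $\widehat Q_\tau^*\le Q$ (since $Q$ is continuous), and inherits the growth bound, so it belongs to the admissible family. Maximality forces $\widehat Q_\tau=\widehat Q_\tau^*$, and in particular $\widehat Q_\tau$ is upper semicontinuous. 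Then $\setS_\tau=\{Q-\widehat Q_\tau\le 0\}$ is the sublevel set of a lower semicontinuous function, hence closed and therefore compact.

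\emph{Harmonicity off $\setS_\tau$.} Given $z_0\notin \setS_\tau$, I would choose $r>0$ small enough that both $Q-\widehat Q_\tau\ge \delta>0$ and $\operatorname{osc}(Q;\overline{\D(z_0;r)})<\delta$ hold on $\overline{\D(z_0;r)}$. Let $h$ denote the Poisson extension to $\D(z_0;r)$ of $\widehat Q_\tau|_{\partial \D(z_0;r)}$. The sub-mean-value inequality gives $h\ge \widehat Q_\tau$ on the disk, while the maximum principle combined with the oscillation bound forces $h\le Q$ there. Hence the function equal to $h$ inside the disk and to $\widehat Q_\tau$ outside is subharmonic on $\C$, of the right growth, and bounded above by $Q$, hence admissible; maximality of $\widehat Q_\tau$ forces $\widehat Q_\tau\ge h$, so $\widehat Q_\tau=h$ is harmonic on $\D(z_0;r)$.

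\emph{$\calC^{1,1}$-regularity.} This is the chief technical obstacle; the three previous items are essentially formal consequences of the envelope definition. Inside $\setS_\tau^\circ$ we have $\widehat Q_\tau=Q\in \calC^{1,1}$ and off $\setS_\tau$ the function is harmonic; the issue is uniform control of the Hessian near the free boundary $\partial\setS_\tau$. I would invoke the classical obstacle-problem regularity theorem of Frehse and Caffarelli--Kinderlehrer--Stampacchia for $\calC^{1,1}$ obstacles: pointwise testing of $\widehat Q_\tau$ against the quadratic comparison $Q(\zeta)-L|\zeta-z|^2$, where $L$ bounds $\Delta Q$ in $L^\infty_{\mathrm{loc}}$ near $z$, produces the one-sided distributional estimate $\Delta \widehat Q_\tau\le L$ in a neighbourhood of any free boundary point. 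Combined with $\Delta\widehat Q_\tau\ge 0$ (subharmonicity) and harmonicity off $\setS_\tau$, this gives $\widehat Q_\tau\in W^{2,\infty}_{\mathrm{loc}}(\C)=\calC^{1,1}_{\mathrm{loc}}(\C)$, which is the meaning of $\calC^{1,1}(\C)$ in the lemma. It is precisely at this step that the $\calC^{1,1}$ hypothesis on $Q$ is used in an essential way.
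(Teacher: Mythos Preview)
The paper's own proof is a bare citation to Hedenmalm--Makarov \cite{HM} and Saff--Totik \cite{ST}, so your approach is necessarily different: you are reconstructing what those references contain. Your arguments for the growth bound (Phragm\'en--Lindel\"of uniformly over competitors), for subharmonicity and compactness of $\setS_\tau$ (Choquet regularisation, then lower semicontinuity of $Q-\widehat Q_\tau$), and for harmonicity on $\C\setminus\setS_\tau$ (Poisson modification with an oscillation bound on $Q$) are all correct and are the standard proofs of these facts.

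For the $\calC^{1,1}$ regularity, your instinct to invoke obstacle-problem theory (Frehse, Caffarelli--Kinderlehrer) is exactly right, and this is in essence what the references cited by the paper do. However, the one-line mechanism you sketch does not work as written. The implication ``$0\le\Delta\widehat Q_\tau\le L$ in the distributional sense, hence $\widehat Q_\tau\in W^{2,\infty}_{\mathrm{loc}}$'' is false in general: a bounded Laplacian yields only $W^{2,p}_{\mathrm{loc}}$ for every finite $p$ by Calder\'on--Zygmund, not $W^{2,\infty}$, since the Riesz transforms are unbounded on $L^\infty$. The $\calC^{1,1}$ conclusion for obstacle problems is a genuinely deeper statement that exploits the free-boundary structure---roughly, at each contact point $\widehat Q_\tau$ touches the $\calC^{1,1}$ obstacle $Q$ from below, and the two-sided quadratic comparison this affords controls the full Hessian of $\widehat Q_\tau$, not merely its trace. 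So your citation is appropriate, but the heuristic you offer for it should be dropped or replaced; as a proof step it would not survive scrutiny.
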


\begin{proof} This is [\cite{HM}, Prop. 4.2, p. 10]
and [\cite{ST}, Th. I.4.7, p. 54].\end{proof}

\noindent It is easy to construct subharmonic minorants of critical
growth; for $C$ large enough, the function $z\mapsto \tau\log_+\lpar
\babs{z}^2/C\rpar$ is a minorant of $Q$ of class $\SH_\tau$. It
yields that
\begin{equation}\label{qtau}\widehat{Q}_\tau(z)=\tau\log\babs{z}^2+
\calO(1)\quad\text{as}\quad z\to \infty.\end{equation}

\begin{prop}\label{prop1} Let $Q\in \calC^{1,1}(\C)$ and $z_0\in\C$ an arbitrary point.
Then $B_{m,n}^{\langle z_0\rangle}(\Lambda) \to1$ as $m\to\infty$
and $n\le m\tau+ 1$ for every open neighbourhood $\Lambda$ of
$\setS_\tau$.
\end{prop}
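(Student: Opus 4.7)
The plan is to reduce everything to a bound on the diagonal of the kernel, then exploit the growth of $Q-\widehat{Q}_\tau$ off the droplet $\setS_\tau$. By Cauchy--Schwarz applied to the reproducing kernel,
\begin{equation*}
\babs{K_{m,n}(z,z_0)}^2\le K_{m,n}(z,z)\,K_{m,n}(z_0,z_0),
\end{equation*}
so that $\berd^{\langle z_0\rangle}_{m,n}(z)\le K_{m,n}(z,z)\,\e^{-mQ(z)}$, and the probability of $\C\setminus\Lambda$ under $B^{\langle z_0\rangle}_{m,n}$ is bounded, independently of $z_0$, by $\int_{\C\setminus\Lambda}K_{m,n}(z,z)\e^{-mQ(z)}\dA(z)$. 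Hence it is enough to show that this integral tends to $0$.

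The heart of the proof is the pointwise envelope estimate
\begin{equation*}
K_{m,n}(z,z)\le \e^{m\widehat{Q}_\tau(z)+o(m)}\qquad(z\in\C),
\end{equation*}
with $o(m)$ uniform in $z$. Since $K_{m,n}(z,z)=\sup\{\babs{P(z)}^2:P\in\Hspace_{m,n},\|P\|_{mQ}=1\}$, it suffices to control a single such $P$. For such $P$, the function $\tfrac{1}{m}\log\babs{P}^2$ is subharmonic with asymptotic growth at most $\frac{2(n-1)}{m}\log\babs{z}\le \tau\log\babs{z}^2+o(1)$ at $\infty$, thanks to the hypothesis $n\le m\tau+1$. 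Applying the sub-mean-value property on a disk $\D(z,r)$ with the Lipschitz bound on $Q$ yields
\begin{equation*}
\tfrac{1}{m}\log\babs{P(z)}^2\le Q(z)+Lr-\tfrac{2\log r}{m},
\end{equation*}
and the choice $r\sim m^{-1/2}$ gives $\tfrac{1}{m}\log\babs{P(z)}^2\le Q(z)+\eps_m$ with $\eps_m=O(\log m/m)$. After subtracting $\eps_m$, the function $\tfrac{1}{m}\log\babs{P}^2-\eps_m$ sits in the competing class $\SH_\tau$ used to define $\widehat{Q}_\tau$, and is pointwise $\le Q$; by definition of the envelope it is therefore $\le\widehat{Q}_\tau$, which yields the stated bound after taking the supremum over $P$.

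Combining the two estimates,
\begin{equation*}
K_{m,n}(z,z)\e^{-mQ(z)}\le \e^{-m\lpar Q(z)-\widehat{Q}_\tau(z)\rpar+o(m)}.
\end{equation*}
The exponent $Q-\widehat{Q}_\tau$ is continuous and non-negative, vanishes on $\setS_\tau$, and by \eqref{gro} together with \eqref{qtau} grows like $(\parT-\tau)\log\babs{z}^2$ at infinity. Since $\Lambda$ is an open neighbourhood of the compact set $\setS_\tau$, continuity and the growth at infinity give $Q-\widehat{Q}_\tau\ge c>0$ throughout $\C\setminus\Lambda$. Splitting the integral and comparing $\e^{-m(Q-\widehat{Q}_\tau)}$ with a fixed integrable power $\e^{-m_0(Q-\widehat{Q}_\tau)}$ (where $m_0$ is large enough to ensure polynomial decay at $\infty$), one gets a factor $\e^{-(m-m_0)c}$ which beats the subexponential $\e^{o(m)}$ prefactor. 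Hence $\int_{\C\setminus\Lambda}K_{m,n}(z,z)\e^{-mQ(z)}\dA\to 0$, proving the proposition.

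The main obstacle is the envelope estimate of the diagonal kernel: it hinges on converting a polynomial of controlled degree into a subharmonic function of class $\SH_\tau$, which requires both the sub-mean-value step (where Lipschitz regularity of $Q$ is used to convert the weighted norm into a pointwise bound with negligible loss) and then a pullback through the extremal definition of $\widehat{Q}_\tau$. Everything else is bookkeeping with Cauchy--Schwarz and the growth of $Q-\widehat{Q}_\tau$.
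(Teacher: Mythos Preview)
Your approach is essentially the same as the paper's: both arguments establish the pointwise envelope bound
\[
\babs{P(z)}^2\e^{-mQ(z)}\le C_m\,\e^{-m(Q(z)-\widehat Q_\tau(z))},\qquad P\in\Hspace_{m,n},\ \|P\|_{mQ}=1,
\]
with a subexponential prefactor $C_m$, and then use that $Q-\widehat Q_\tau$ is bounded below by a positive constant on $\C\setminus\Lambda$ together with its logarithmic growth at infinity to kill the integral. The paper applies this bound directly to $P=K_{m,n}(\cdot,z_0)/\sqrt{K_{m,n}(z_0,z_0)}$, so your Cauchy--Schwarz reduction to the diagonal is an extra (harmless) step.

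There is one imprecision worth flagging. To conclude that $\tfrac1m\log|P|^2-\eps_m\le Q$ \emph{everywhere} (which is what the envelope definition of $\widehat Q_\tau$ requires), you invoke ``the Lipschitz bound on $Q$''. But $Q\in\calC^{1,1}(\C)$ does not give a global Lipschitz constant; $\nabla Q$ may be unbounded, so your $L$---and hence your $\eps_m$---is a priori $z$-dependent. (Incidentally, with $r\sim m^{-1/2}$ the $Lr$ term is $O(m^{-1/2})$, not $O(\log m/m)$, though this does not matter.) The paper circumvents this by a two-step argument: first, a sub-mean-value inequality using an $\esssup$ of $\Delta Q$ on a $1$-neighbourhood of the compact set $\setS_\tau$ (Lemma~\ref{lemm2}) yields $\tfrac1m\log|P|^2\le Q+\eps_m$ on $\setS_{\tau(m,n)}\subset\setS_\tau$; second, a weak maximum principle (Lemma~\ref{wmax}) upgrades this to $\tfrac1m\log|P|^2\le \widehat Q_{\tau(m,n)}\le\widehat Q_\tau$ on all of $\C$, using only that $\tfrac1m\log|P|^2\in\SH_{\tau(m,n)}$ and the coincidence-set inequality. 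Your argument is easily repaired along these lines, or alternatively by handling large $|z|$ separately via the growth condition $n-1\le m\tau<m\parT$.
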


\begin{proof} See Sect. \ref{sec3}.\end{proof}

\noindent It follows from Prop. \ref{prop1} that $B_{m,n}^{\langle
z_0\rangle}(\C\setminus\Lambda)\to 0$. Hence if $z_0\not\in
\setS_\tau$, then \eqref{q2} fails in general, and
\begin{equation*}\berez_{m,n}f(z_0)\to 0\qquad \text{as}\quad
m\to\infty\quad \text{and} \quad n\le m\tau+1,\end{equation*} for
all $f\in\calC_b(\C)$ such that $\supp f\cap\setS_\tau=\emptyset$.
The situation is entirely different when the point $z_0$ is in the
interior of $\setS_\tau\cap\setX$.

\begin{thm}\label{th1} Assume that $Q\in\calC^2(\C)$ and let
$z_0\in \setS_\tau^\circ\cap \setX$. Then, for any
$f\in\calC_b(\C)$, and any real number $M\ge 0$, the measures
$B^{\langle z_0\rangle}_{m,n}$ converge to $\delta_{z_0}$ as
$m\to\infty$ and $n\ge m\tau-M$.
\end{thm}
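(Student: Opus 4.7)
The plan is to localize the Berezin measure near $z_0$ via sharp asymptotics of the reproducing kernel $K_{m,n}$. For any open neighbourhood $U$ of $z_0$ one has
\[1 - B^{\langle z_0\rangle}_{m,n}(U) = \frac{1}{K_{m,n}(z_0,z_0)} \int_{\C \setminus U} \babs{K_{m,n}(z,z_0)}^2 \e^{-mQ(z)}\dA(z),\]
so weak convergence to $\delta_{z_0}$ reduces to (i) a sharp lower bound for $K_{m,n}(z_0,z_0)$ and (ii) Gaussian off-diagonal decay for $\babs{K_{m,n}(z,z_0)}^2 \e^{-m(Q(z)+Q(z_0))}$. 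Since $z_0 \in \setS_\tau^\circ \cap \setX$, the weight $Q$ is strictly subharmonic near $z_0$, and the natural microscopic scale is $(m\Delta Q(z_0))^{-1/2}$; on that scale the Berezin kernel should be approximately the standard Gaussian.

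For (i), I would construct a peak polynomial in $\Hspace_{m,n}$ by a H\"ormander-type $\dbar$-scheme. On a small disk around $z_0$, the Taylor expansion of $Q$ reads
\[Q(z) = 2\re\phi(z) + \Delta Q(z_0)\babs{z-z_0}^2 + o\lpar\babs{z-z_0}^2\rpar,\]
where $\phi$ is a holomorphic polynomial of degree at most two. The cut-off $\chi(z)\e^{m\phi(z)-m\phi(z_0)}$ almost reproduces at $z_0$; correcting by the $L^2$-minimal solution of $\dbar v = \dbar\lpar\chi\,\e^{m\phi-m\phi(z_0)}\rpar$ with plurisubharmonic weight $mQ(z) + (n-1)\log(1+\babs{z}^2)$ produces a polynomial $p$ of degree at most $n-1$ with $p(z_0) \approx 1$ and $\|p\|_{mQ}^2 \lesssim (m\Delta Q(z_0))^{-1}\e^{mQ(z_0)}$. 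The growth bound \eqref{gro} combined with the logarithmic factor in the weight forces $v$ to have polynomial growth of admissible order. The extremal property of the reproducing kernel then delivers $K_{m,n}(z_0,z_0) \gtrsim m\Delta Q(z_0)\,\e^{mQ(z_0)}$.

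For (ii), a parallel $\dbar$-argument applied to $\chi_\delta K_{m,n}(\cdot,z_0)$, where $\chi_\delta$ is a cut-off vanishing on $\D(z_0,\delta)$, combined with an orthogonal decomposition in $\Hspace_{m,n}$ using a perturbed weight $mQ(z) + c\babs{z-z_0}^2$, should yield
\[\int_{\C \setminus \D(z_0,\delta)} \babs{K_{m,n}(z,z_0)}^2 \e^{-mQ(z)}\dA(z) \le C\e^{-cm\delta^2}\, K_{m,n}(z_0,z_0).\]
Together with (i) this produces the desired concentration. The main obstacle is to make the off-diagonal estimate uniform throughout the whole range $n \ge m\tau - M$, and in particular in the regime $n - m\tau \to \infty$ where Proposition \ref{prop1} does not apply. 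The auxiliary logarithmic weight must be tuned so that the $\dbar$-corrected functions automatically lie in $\Hspace_{m,n}$, while spurious contributions from high-degree monomials outside $\setS_\tau$ remain controlled. Handling this uniformity, likely through a global a priori pointwise bound for $\babs{K_{m,n}(z,z_0)}^2 \e^{-m(Q(z)+Q(z_0))}$ in terms of the diagonal values together with a weighted mean-value inequality, is the step I expect to require the most care.
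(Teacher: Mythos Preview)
Your overall strategy---lower bound on the diagonal via a peak polynomial, off-diagonal decay via a weighted $\dbar$-estimate---is sound and is in fact the paper's \emph{alternative} route to Theorem \ref{th1} through Theorem \ref{th1.5} (see Remark \ref{smick}). The paper's primary proof, however, is organized differently: it first establishes the near-diagonal asymptotic expansion $K_{m,n}(z,w)=K_m^1(z,w)+\calO(m^{-1})\e^{m(Q(z)+Q(w))/2}$ of Theorem \ref{th3}, from which both the diagonal asymptotic $K_{m,n}(z_0,z_0)\e^{-mQ(z_0)}\sim m\Delta Q(z_0)$ and the local Gaussian profile of the Berezin kernel fall out at once; one then simply integrates over a shrinking disk of radius $\delta_m\sim(\log m)/\sqrt{m}$ to obtain $B_{m,n}^{\langle z_0\rangle}(\D(z_0;\delta_m))\to 1$.

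There is a genuine gap in your implementation of the $\dbar$-scheme. You propose to run H\"ormander's estimate with the weight $mQ(z)+(n-1)\log(1+\babs{z}^2)$, but $Q$ is only assumed to satisfy $\Delta Q>0$ on $\setX$, not on all of $\C$; for large $m$ the logarithmic term cannot compensate $m\Delta Q<0$ outside $\setX$, so the weight need not be subharmonic. Even ignoring that, by \eqref{gro} this weight grows at least like $(m\parT+n-1)\log\babs{z}^2$ at infinity, hence its Bergman space is much larger than $\pspace_n$, and the $L^2$-minimal solution has no reason to produce a polynomial of degree $\le n-1$. The same defects apply to your perturbed weight $mQ+c\babs{z-z_0}^2$ in step (ii). You correctly flag that the auxiliary weight ``must be tuned,'' but the actual tuning is the crux: the paper's remedy (Section \ref{l2estimates}, especially Theorem \ref{boh} and Corollary \ref{bh}) is to replace $mQ$ by a weight built from the \emph{equilibrium potential}, $\wh{\phi}(z)=(m-\apar)\wh{Q}_\tau(z)+\bpar\log(1+\babs{z}^2)$. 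This is globally subharmonic by Lemma \ref{drspock}, agrees with $mQ$ on $\setS_\tau$ up to a bounded error, and grows exactly like $m\tau\log\babs{z}^2$ at infinity, so that $A^2_{\wh{\phi}}\subset\pspace_n$ whenever $n\ge m\tau-M$. Once this substitution is made, your scheme goes through and becomes essentially the content of Section \ref{point}.
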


\begin{proof} See Sect. \ref{p2}. See also Rem. \ref{smick}. \end{proof}

\subsection{A more elaborate estimate for the Berezin kernel.}
Th. \ref{th1} suggests that if $z_0$ is a given point in the
interior of $\setS_\tau\cap \setX$, $m$ is large, and $n\ge
m\tau-1$, then the density $\berd_{m,n}^{\langle z_0\rangle}(z)$
should attain its maximum for $z$ close to $z_0$. The following
theorem implies that this is the case, and gives a good control for
$\berd_{m,n}^{\langle z_0\rangle}$ in the critical case, when
$n-m\tau\to 0$.

\begin{thm} \label{th1.5} Assume that $Q\in\calC^2(\C)$. Let $K$ be a compact
subset of $\setS_\tau^\circ\cap\setX$ and fix a non-negative number
$M$. Put
\begin{equation*}d_K=\dist(K,\C\setminus(\setS_\tau\cap\setX))\qquad \text{and}\qquad
a_K=\inf\{\Delta Q(z);\, \dist(z,K)\le d_K/2\}.\end{equation*} Then
there exists positive numbers $C$ and $\epsilon$ such that
\begin{equation*}
\berd^{\langle z_0\rangle}_{m,n}(z)\le C m\e^{-\epsilon
\sqrt{m}\min\{d_K,\babs{z-z_0}\}}\e^{-m(Q(z)-\wh{Q}_\tau(z))},\qquad
z_0\in K,\,\, z\in \C,\,\, m\tau-M\le n\le m\tau+1.
\end{equation*}
Here $C$ depends only on $K$, $M$ and $\tau$, while $\epsilon$ only
depends on $M$, $a_K$ and $\tau$.
\end{thm}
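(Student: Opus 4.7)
The plan is to combine three complementary estimates: an $L^\infty$ bound $|u(z)|^2\e^{-m\wh Q_\tau(z)}\le C_0 m\|u\|_{mQ}^2$ valid for every $u\in\Hspace_{m,n}$; a matching diagonal lower bound $K_{m,n}(z_0,z_0)\e^{-mQ(z_0)}\ge c_K m$ for $z_0\in K$; and a Gaussian off-diagonal improvement. Since $\wh Q_\tau(z_0)=Q(z_0)$ on $\setS_\tau\supset K$, applying the first bound to $u=K_{m,n}(\cdot,z_0)/K_{m,n}(z_0,z_0)^{1/2}$ together with the lower bound yields $\berd^{\langle z_0\rangle}_{m,n}(z)\le Cm\e^{-m(Q(z)-\wh Q_\tau(z))}$, which is the theorem with $\epsilon=0$; the Gaussian decay upgrades this to the full bound.

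The pointwise $L^\infty$ bound relies on Lemma \ref{drspock} and \eqref{qtau}: $\wh Q_\tau$ is $\calC^{1,1}$, harmonic off $\setS_\tau$, majorized by $Q$ with equality on $\setS_\tau$, and grows like $\tau\log|z|^2+O(1)$ at infinity. Inside $\setS_\tau$, a weighted sub-mean-value inequality on disks of radius $\sim m^{-1/2}$ handles the bound. Outside $\setS_\tau$, the function $|u(z)|^2\e^{-m\wh Q_\tau(z)}$ is subharmonic (log-subharmonic $|u|^2$ times a harmonic weight) and bounded at infinity because $\deg u\le n-1\le m\tau+1$ matches the logarithmic growth rate of $\wh Q_\tau$, so the maximum principle reduces the inequality to values on $\d\setS_\tau$. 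For the diagonal lower bound, I would construct a peak polynomial. With $\varphi_{z_0}(z)=\d Q(z_0)(z-z_0)+\tfrac12\d^2 Q(z_0)(z-z_0)^2$ the holomorphic Taylor polynomial at $z_0$ (arranged so that $2\re\varphi_{z_0}(z)=Q(z)-Q(z_0)-\Delta Q(z_0)|z-z_0|^2+o(|z-z_0|^2)$) and a smooth cutoff $\chi_{z_0}$ supported in $\D(z_0,d_K/4)$, set
\begin{equation*}
v_{z_0}(z)=\sqrt{m\Delta Q(z_0)}\,\chi_{z_0}(z)\,\e^{mQ(z_0)/2+m\varphi_{z_0}(z)}.
\end{equation*}
Hörmander's weighted $L^2$-estimate for $\dbar$ with the admissible weight $mQ$ and polynomial-growth constraint at infinity (the auxiliary $\dbar$ results alluded to in the abstract) yields a correction $w_{z_0}$ with $\|w_{z_0}\|_{mQ}=\calO(m^{-1/2})$ and $w_{z_0}(z)=\calO(|z|^{n-1})$ at infinity; then $u_{z_0}:=v_{z_0}-w_{z_0}\in\Hspace_{m,n}$ satisfies $\|u_{z_0}\|_{mQ}^2\le C$ and $|u_{z_0}(z_0)|^2\e^{-mQ(z_0)}\ge c_K m$, giving the desired lower bound for $K_{m,n}(z_0,z_0)$ by extremality.

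The Gaussian off-diagonal decay is the main obstacle, which I would obtain by a weight perturbation. For $z_0\in K$, introduce $\widetilde Q_{z_0}=Q+m^{-1/2}g_{z_0}$, where $g_{z_0}:\C\to[0,\infty)$ is a bounded Lipschitz function equal to $\epsilon|z-z_0|$ on $\D(z_0,d_K/2)$ and subharmonic on the $d_K$-neighborhood of $z_0$. For $\epsilon=\epsilon(a_K,M,\tau)$ small enough, the strict subharmonicity $\Delta Q\ge a_K$ on the $d_K/2$-neighborhood of $K$ dominates the distributional Laplacian $\epsilon m^{-1/2}(2|z-z_0|)^{-1}$ of $m^{-1/2}g_{z_0}$ off an $m^{-1/2}$-scale disk about $z_0$, and a comparison argument shows that the perturbed equilibrium potential satisfies $\wh{\widetilde Q_{z_0}}_\tau(z)\ge\wh Q_\tau(z)+m^{-1/2}g_{z_0}(z)$ on $\D(z_0,d_K/2)$. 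Running the $L^\infty$ bound of the first step with $\widetilde Q_{z_0}$ in place of $Q$, and using $\|K_{m,n}(\cdot,z_0)\|_{m\widetilde Q_{z_0}}\le\|K_{m,n}(\cdot,z_0)\|_{mQ}$, gives
\begin{equation*}
|K_{m,n}(z,z_0)|^2\e^{-m\wh Q_\tau(z)-\sqrt m g_{z_0}(z)}\le Cm\,K_{m,n}(z_0,z_0),
\end{equation*}
which, combined with the lower bound on the diagonal, is the claimed estimate on $\D(z_0,d_K/2)$. For $|z-z_0|>d_K/2$ the function $\min\{d_K,|z-z_0|\}$ is bounded above by a constant multiple of $g_{z_0}(z)/\epsilon$, and the preceding inequality (together with the subharmonicity of $|K_{m,n}(\cdot,z_0)|^2\e^{-m\wh Q_\tau}$ off $\setS_\tau$ and its polynomial boundedness at infinity) propagates the decay across the boundary $\T(z_0,d_K/2)$ to the exterior.

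The principal technical difficulty is making the weight-perturbation argument uniform in $z_0\in K$ and controlling the exceptional $m^{-1/2}$-disk about $z_0$ where strict subharmonicity of $\widetilde Q_{z_0}$ fails. This requires a careful matching of the small parameter $\epsilon$ against $a_K$, $M$, and $\tau$, and a robust bound for the perturbed equilibrium potential under the bounded Lipschitz perturbation of $Q$.
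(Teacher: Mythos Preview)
Your overall architecture is sound and matches the paper's: a global pointwise bound from Lemma~\ref{lemm4}, a diagonal lower bound on $K_{m,n}(z_0,z_0)$, an off-diagonal refinement, and a maximum-principle extension to $\C\setminus\setS_\tau$ (this last step is exactly Theorem~\ref{flock}). Your peak-polynomial construction for the diagonal lower bound is in fact more elementary than the paper's route through the full expansion of Theorem~\ref{th3}, and is better suited to the $\calC^2$ hypothesis.

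The genuine gap is in the off-diagonal step. With $\widetilde Q_{z_0}=Q+m^{-1/2}g_{z_0}$ and $g_{z_0}\ge0$, the $L^\infty$ bound for the weight $\widetilde Q_{z_0}$ reads
\[
|u(z)|^2\le C m\,\|u\|_{m\widetilde Q_{z_0}}^2\,\e^{m\wh{\widetilde Q_{z_0}}_\tau(z)}.
\]
You correctly note $\|u\|_{m\widetilde Q_{z_0}}\le\|u\|_{mQ}$, but to reach your displayed inequality you then need an \emph{upper} bound $\wh{\widetilde Q_{z_0}}_\tau(z)\le\wh Q_\tau(z)+m^{-1/2}g_{z_0}(z)$, whereas the competitor argument you sketch only gives the \emph{lower} bound $\wh{\widetilde Q_{z_0}}_\tau\ge\wh Q_\tau+m^{-1/2}g_{z_0}$. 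Even granting your displayed inequality, rearranging gives
\[
\berd_{m,n}^{\langle z_0\rangle}(z)\le Cm\,\e^{-m(Q(z)-\wh Q_\tau(z))}\,\e^{+\sqrt{m}\,g_{z_0}(z)},
\]
which has $\e^{+\epsilon\sqrt m|z-z_0|}$ on the right, a loss rather than the desired gain. Reversing the sign of the perturbation does not help: with $\widetilde Q=Q-m^{-1/2}g$ you would get $\wh{\widetilde Q}_\tau\le\wh Q_\tau$ (good) but lose control of $\|K_{m,n}(\cdot,z_0)\|_{m\widetilde Q}$, which is exactly the concentration statement you are trying to prove. The two effects of perturbing $Q$ (on the norm and on the equilibrium potential) always work against each other here.

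The paper obtains the off-diagonal decay by a different mechanism. In Lemma~\ref{propn1} one writes, via a cutoff $\chi_0$ vanishing near $z_0$,
\[
\int_\C \chi_0\,|K_{m,n}(\cdot,z_0)|^2\e^{-mQ}\dA
=\sup\Big\{\,|P_{m,n}[\chi_0 u](z_0)|^2:\ u\in H_{m,n},\ \int|u|^2\chi_0\e^{-mQ}\dA\le1\Big\},
\]
and then $P_{m,n}[\chi_0 u](z_0)=-u_*(z_0)$ with $u_*$ the $L^2_{mQ,n}$-minimal solution of $\dbar u_*=u\,\dbar\chi_0$. The exponential gain now comes from the weighted $\dbar$-estimate of Theorem~\ref{boh} with an auxiliary weight $\Tfun_m(\zeta)\approx-\epsilon\sqrt m\,|\zeta-z_0|$ (flattened on an $m^{-1/2}$-disk and capped near radius $d$): one bounds $\int|u_*|^2\e^{\Tfun_m-mQ}$ rather than $\int|u_*|^2\e^{-mQ}$, and since $\dbar\chi_0$ lives where $\Tfun_m\le-\epsilon\sqrt m\,\eps_0(z)/4$, the factor $\e^{\Tfun_m}$ produces the decay. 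The perturbation is the same object you wrote down, but it enters through the $\dbar$-machinery and the extremal/dual characterization of the kernel, not through the obstacle problem; that is what makes the inequalities chain in the right direction.
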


\begin{proof} See Sect. \ref{point}.
\end{proof}

A similar result was proved independently by Berman \cite{B3} after the completion of this note.

\begin{rem} \label{smick} For fixed $M$ and $\tau$, the number
$\epsilon$ can be chosen proportional to $a_K$. A related result
with much more precise information on the dependence of $C$ and
$\epsilon$ on the various parameters in question is given in Th.
\ref{propn1} and the subsequent remark.

We also remark that our proof for Th. \ref{th1.5} is very different
from that for Th. \ref{th1}. Thus Th. \ref{th1.5} gives an
alternative method for obtaining Th. \ref{th1} in the critical case
when $n-m\tau\to 0$.

\end{rem}

\subsection{Gaussian convergence.} Fix a point $z_0\in \setX$. It
will be convenient to introduce the \textit{normalized}
Berezin measure $\wh{B}_{m,n}^{\langle z_0\rangle}$ by
\begin{equation}
\diff\widehat B^{\langle z_0\rangle}_{m,n}=\wh{\berd}_{m,n}^{\langle
z_0\rangle}\dA\quad \text{where}\quad \wh{\berd}_{m,n}^{\langle
z_0\rangle}(z)=\frac 1 {m\Delta Q(z_0)} {\berd^{\langle
z_0\rangle}_{m,n}\bigg(z_0+\frac{z}{\sqrt{m\Delta Q(z_0)}}\bigg)}
.\end{equation} We denote the standard Gaussian p.m. on $\C$ by
\begin{equation*}
\diffP(z)=\e^{-\babs{z}^2}\diff A(z).
\end{equation*}

We have the following CLT, which gives much more precise information
than Th. \ref{th1}. We will settle for stating it for $\calC^\infty$ weights $Q$
which are \textit{real-analytic} in a neighbourhood of $\setS_\tau$, but remark that, using
well-known methods, our proof can be extended to cover the case of,
say, $\calC^\infty$-smooth weights. We will give further details on
possible generalizations later on.

\begin{thm}\label{th2} Assume that $Q$ is real-analytic in a neighbourhood of $\setS_\tau$.
Fix a compact subset $K\Subset \setS_\tau^\circ\cap\setX$, a point
$z_0\in K$, and a number $M\ge 0$. Then we have
\begin{equation}\label{gc1}
\int_\C\babs{\wh{\berd}_{m,n}^{\langle z_0\rangle}(z)
-\e^{-\babs{z}^2}}\dA(z)\to0\qquad\text{as}\quad
m\to\infty\quad \text{and}\quad  n\ge m\tau-M,\end{equation} with
uniform convergence for $z_0\in K$. Equivalently, $\widehat
B^{\langle z_0\rangle}_{m,n}\to P$ as
$m\to\infty$ and $n\ge m\tau-M$.
\end{thm}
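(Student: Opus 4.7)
The plan is to deduce the $L^1$-statement \eqref{gc1} from Scheff\'e's lemma. Both $\wh{\berd}_{m,n}^{\langle z_0\rangle}$ and the Gaussian density $e^{-\babs{\zeta}^2}$ are probability densities on $\C$ with respect to $\dA$, because $B_{m,n}^{\langle z_0\rangle}$ is a probability measure and the factor $m\Delta Q(z_0)$ in the definition of $\wh{\berd}_{m,n}^{\langle z_0\rangle}$ is exactly the Jacobian of the scaling $z=z_0+\zeta/\sqrt{m\Delta Q(z_0)}$. Hence pointwise a.e.\ convergence $\wh{\berd}_{m,n}^{\langle z_0\rangle}(\zeta)\to e^{-\babs{\zeta}^2}$ would immediately yield \eqref{gc1}, and uniformity in $z_0\in K$ would follow by a compactness/subsequence argument, provided the intermediate estimates are themselves uniform in $z_0$.

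The pointwise limit is to come from a near-diagonal asymptotic expansion of the reproducing kernel. Since $Q$ is real-analytic in a neighborhood of $\setS_\tau\supset K$, it polarizes to a holomorphic function $H(z,\bar w)$ of two complex variables, defined on a neighborhood of the anti-diagonal of $K\times K$, satisfying $H(z,\bar z)=Q(z)$. Following \cite{BBS}, \cite{B}, and the $\dbar$-techniques for polynomial Bergman spaces developed earlier in the paper, I would first build an approximate reproducing kernel of the form $m\Delta Q(z_0)\,e^{mH(z,\bar w)}$ times a local cutoff near $z_0$, and then correct it using a Hörmander-type $L^2$-estimate for $\dbar u=f$ to recover the true kernel. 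The targeted statement is
\begin{equation*}
K_{m,n}(z,w)=m\Delta Q(z_0)\,e^{mH(z,\bar w)}\bigl(1+o(1)\bigr)\qquad \text{as}\ \ m\to\infty,\ n-m\tau\to 0,
\end{equation*}
valid when $z,w$ lie in a $z_0$-neighbourhood shrinking at an appropriate polynomial rate, with $o(1)$ uniform in $z_0\in K$.

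Granted this expansion, the Gaussian limit reduces to a short Taylor computation. Writing the real-analytic expansion of $Q$ about $z_0$ as $Q(z)=Q(z_0)+2\re[H(z,\bar z_0)-Q(z_0)]+\Delta Q(z_0)\babs{z-z_0}^2+\calO(\babs{z-z_0}^3)$, one gets the cancellation
\begin{equation*}
2\re H(z,\bar z_0)-Q(z_0)-Q(z)=-\Delta Q(z_0)\babs{z-z_0}^2+\calO\bigl(\babs{z-z_0}^3\bigr),
\end{equation*}
so the scaling $z=z_0+\zeta/\sqrt{m\Delta Q(z_0)}$ turns $m$ times the left-hand side into $-\babs{\zeta}^2+\calO(m^{-1/2})$. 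Combined with the on-diagonal expansion of $K_{m,n}(z_0,z_0)$, this gives $\wh{\berd}_{m,n}^{\langle z_0\rangle}(\zeta)\to e^{-\babs{\zeta}^2}$ uniformly on every bounded set of $\zeta$ and uniformly in $z_0\in K$. Theorem~\ref{th1.5}, rescaled, furnishes a uniform bound $\wh{\berd}_{m,n}^{\langle z_0\rangle}(\zeta)\le C\exp\bigl(-\eps'\babs{\zeta}\bigr)$ in the range $\babs{\zeta}\le c\sqrt{m}$, while the factor $e^{-m(Q-\wh{Q}_\tau)}$ takes care of the super-exponentially small contribution from $\babs{\zeta}>c\sqrt{m}$. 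Splitting $\int_\C=\int_{\babs{\zeta}\le R}+\int_{\babs{\zeta}>R}$ and applying dominated convergence on the first piece while using the exponential majorant on the second yields \eqref{gc1} uniformly in $z_0\in K$.

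The main obstacle is producing the near-diagonal expansion of $K_{m,n}$ with $o(1)$ error uniform both in $z_0\in K$ and in the critical regime $n\ge m\tau-M$. This is precisely where real-analyticity of $Q$ is used in an essential way, because it provides the honestly holomorphic object $H(z,\bar w)$ underlying the approximate kernel; the $\dbar$-machinery and the sharp off-diagonal estimate of Theorem~\ref{th1.5} then do the work of turning that approximation into a genuine expansion, by preventing the local correction from picking up unwanted contributions transported in from the boundary of $\setS_\tau$.
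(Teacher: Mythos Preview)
Your approach is correct and yields the theorem, but it differs from the paper's proof in a couple of ways worth noting.

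First, the near-diagonal kernel expansion you set out to establish is exactly Theorem~\ref{th3} in the paper, so you may simply quote it rather than redevelop it.

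Second, and more substantively, the paper does not invoke Scheff\'e's lemma or Theorem~\ref{th1.5} at all. Instead it works in the unscaled variable and splits the integral at the shrinking radius $\delta_n=\eps\log n/\sqrt{n}$. On $\D(z_0;\delta_n)$ the expansion of Theorem~\ref{th3} shows directly that $\berd_{m,n}^{\langle z_0\rangle}(z)$ is uniformly close to $m\Delta Q(z_0)\e^{-m\Delta Q(z_0)\babs{z-z_0}^2}$, which gives both the near-part $A_{n,z_0}\to 0$ and, as a byproduct, $B_{m,n}^{\langle z_0\rangle}(\D(z_0;\delta_n))\to 1$. The far-part $B_{n,z_0}$ is then handled for free by the probability-measure identity $\int_{\babs{z-z_0}\ge\delta_n}\berd_{m,n}^{\langle z_0\rangle}\,\dA = 1 - B_{m,n}^{\langle z_0\rangle}(\D(z_0;\delta_n))\to 0$, together with a trivial Gaussian tail bound. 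All estimates are manifestly uniform in $z_0\in K$, so no separate compactness argument is needed.

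Your route via pointwise convergence plus the exponential majorant from Theorem~\ref{th1.5} is equally valid and arguably cleaner conceptually. The paper's route buys one thing: it uses only Theorem~\ref{th3} and the fact that $B_{m,n}^{\langle z_0\rangle}$ has total mass $1$, so it automatically covers the full range $n\ge m\tau-M$, whereas Theorem~\ref{th1.5} as stated carries the extra restriction $n\le m\tau+1$. If you keep your argument, either note that the Scheff\'e step already suffices for each fixed $z_0$ and upgrade to uniformity via the paper's tail computation, or restrict to the critical window and remark that the case of larger $n$ follows by monotonicity of the diagonal kernel.
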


\begin{proof} See Sect. \ref{p2}. \end{proof}

\subsection{The expansion formula.} Most of our results in this paper
rely on a suitable approximation for
$K_{m,n}(z,w)$ when $z,w$ are both close to some point
$z_0\in\setX$, and $m$ and $n$ are large. We will here assume that
$Q$ be \textit{real-analytic} in a neighbourhood of $\setS_\tau$.

An adequate and rather far-reaching approximation formula was
recently stated by Berman \cite{B}, p. 9, depending on methods from
\cite{BBS}; we will here only require a special case of his result.
We will need to introduce some definitions.

For subsets $S\subset\C$ we write
$$\adiag(S)=\big\{(z,\bar z)\in\C^2;\,z\in S\big\}.$$
That $Q$ is real-analytic in a neighbourhood of $S$ means that there exists a unique
holomorphic function $\Qext$ defined in a neighbourhood of $\adiag(S)$ in $\C^2$ such that
\begin{equation*}
\Qext(z,\bar{z})=Q(z)\quad\text{for all}\quad  z\in\C.
\end{equation*}
Explicitly, one has
\begin{equation}\label{psi2}\Qext(z+h,\overline{z+k})=\sum_{i,j=0}^\infty
[\d^i\dbar^j Q](z)\frac {h^i \bar{k}^j} {i!j!}\end{equation} when
$h$ and $k$ are sufficiently close to zero. In particular,
$[\d_1^i\d_2^j\Qext](z,\bar{z})=\d^i\dbar^j Q(z)$ for all $z\in\C$.
Here, $\d_1$ and $\d_2$ denote differentiation w.r.t. the first and
second coordinates.

\begin{defn}
\label{def1} Let $\bfun_0$ and $\bfun_1$ denote the functions
\begin{equation*}\bfun_0=\d_1\d_2\Qext,\quad
\bfun_1=\frac12\d_1\d_2 \log\big[\d_1\d_2 \Qext\big]=
\frac{\d_1^2\d_2^2 \Qext \d_1\d_2 \Qext -\d_1^2\d_2 \Qext\d_1\d_2^2
\Qext} {2\big[\d_1\d_2 \Qext\big]^2}.\end{equation*} The function
$\bfun_1$ is well-defined where there is no division by $0$, in
particular in a neighborhood of $\adiag(\setX)$.  Along the
anti-diagonal, we have
\begin{equation*}
\bfun_0(z,\bar{z})=\Delta Q(z)\quad \text{and}\quad
\bfun_1(z,\bar{z})=\frac 12\Delta\log\Delta Q(z),\quad z\in \setX.
\end{equation*}
We note for later use that $\bfun_0$ and $\bfun_1$ are connected via
\begin{equation}\label{funrel}\bfun_1(z,w)=\frac 1 2 \frac \d {\d w}\lpar
\frac 1{\bfun_0(z,w)}\frac \d {\d z}\bfun_0(z,w)\rpar.\end{equation}
We define the \textit{first order approximating Bergman kernel}
$K_m^1(z,w)$ by
\begin{equation*}
K_m^1(z,w)=\lpar m\bfun_0(z,\bar{w})+\bfun_1(z,\bar{w})\rpar \e^{m
\Qext(z,\bar{w})}.\end{equation*}
\end{defn}

We have the following theorem.

\begin{thm}\label{th3} Assume that $Q$ is real-analytic in $\C$.
Let $K$ be a compact subset of $\setS_\tau^\circ\cap\setX$. Fix a
point $z_0\in K$ and a number $M\ge 0$. Then there exists a number
$m_0$ depending only on $M$ and $\tau$ and
a positive number $\eps$ depending only on $K$ and $M$ such that
\begin{equation*}
\babs{K_{m,n}(z,w)-K_m^1(z,w)}\e^{-m(Q(z)+Q(w))/2}\le C m^{-1},
\qquad z_0\in K,\quad z,w\in \D(z_0;\eps),
\end{equation*}
for all $m\ge m_0$ and $n\ge m\tau-M$.
Here $C$ is an absolute constant (depending only on $Q$).
In particular, by restricting
to the diagonal, we get
\begin{equation}\label{ber}\babs{K_{m,n}(z,z)\e^{-mQ(z)}-
\lpar m\Delta Q(z)+\frac 1 2\Delta\log\Delta Q(z)\rpar}\le
\frac{C}{m}, \qquad z\in K,
\end{equation}
when $m\ge m_0$ and $n\ge m\tau-M$.
\end{thm}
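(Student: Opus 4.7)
The plan is to adapt the local Bergman-kernel construction of Berman, Berndtsson, and Sj\"ostrand \cite{BBS} to the polynomial Bergman setting. Three steps are involved: (i) verify that $K_m^1(\cdot,w)$ serves as a local approximate reproducing kernel for $\Hspace_{m,n}$ to order $m^{-1}$; (ii) cut it off in $z$ around $z_0$ and $\dbar$-correct to obtain a genuine element $L_w$ of $\Hspace_{m,n}$ via H\"ormander's weighted $L^2$ estimate; (iii) compare $L_w$ to $K_{m,n}(\cdot,w)$ by a duality argument.

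Choose $\eps>0$ so small that the polarization $\Qext(z,\bar\zeta)$ is holomorphic for $z,\zeta\in\D(z_0;2\eps)$ and $\Delta Q\ge a_K/2$ on this disk, and let $\chi$ be a smooth cutoff with $\chi\equiv 1$ on $\D(z_0;\eps/2)$ and $\supp\chi\subset\D(z_0;\eps)$. The real quantity governing the modulus of $K_m^1$ is
$$\Psi(z,w)\;=\;2\re\Qext(z,\bar w)-Q(z)-Q(w),$$
which by a Taylor expansion using \eqref{psi2} satisfies $\Psi(w,w)=0$, $\nabla_z\Psi(w,w)=0$, and $\Psi(z,w)=-\Delta Q(w)|z-w|^2+\calO(|z-w|^3)$, so $\Psi(z,w)\le -c|z-w|^2$ on $\D(z_0;\eps)\times\D(z_0;\eps)$ for some $c>0$ depending only on $a_K$. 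Correspondingly, $|K_m^1(z,w)|\e^{-m(Q(z)+Q(w))/2}\le Cm\,\e^{m\Psi(z,w)/2}$. The approximate reproducing property asserts that for $w\in\D(z_0;\eps/4)$ and $u\in\Hspace_{m,n}$,
$$I_m(u,w)\;:=\;\int_\C u(z)\,\overline{\chi(z)K_m^1(z,w)}\,\e^{-mQ(z)}\,\dA(z)\;=\;u(w)+R_m(u,w),$$
with $|R_m(u,w)|\le Cm^{-2}\|u\|_{mQ}\,\e^{mQ(w)/2}$. This is established via an almost-analytic Laplace-type expansion, handling the complex factor $\e^{m\Qext(w,\bar z)-mQ(z)}$ (whose $\d_z$-derivative at $z=w$ is generally nonzero) by repeated integration by parts in $\bar z$ as in \cite{BBS}. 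The coefficient $m\bfun_0+\bfun_1$ in Definition \ref{def1} is calibrated precisely so that the subprincipal $m^{-1}$ correction in this expansion cancels; the cancellation reduces algebraically to the identity \eqref{funrel}.

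For step (ii), let $L_w\in\Hspace_{m,n}$ denote the $L^2_{mQ}$-orthogonal projection of $\chi K_m^1(\cdot,w)$, equivalently $L_w=\chi K_m^1(\cdot,w)-v_w$ where $v_w$ is the minimal-norm solution to $\dbar v=\dbar\chi\cdot K_m^1(\cdot,w)$ in an auxiliary weighted $L^2$ space whose weight forces polynomial degree less than $n$ at infinity (e.g.\ $mQ+2(n-1)\log(1+|z|^2)$, strictly subharmonic by \eqref{gro} provided $n\le m\parT$). H\"ormander's estimate combined with the Gaussian pointwise bound above yields, since $|z-w|\ge\eps/4$ on $\supp\dbar\chi$,
$$\|v_w\|_{mQ}^2\;\le\;C\int_{\supp\dbar\chi}\frac{|K_m^1(z,w)|^2}{m\Delta Q(z)}\,\e^{-mQ(z)}\,\dA(z)\;\le\;C\e^{-cm\eps^2}\e^{mQ(w)}.$$
For step (iii), apply the approximate reproducing property to $u_0=L_w-K_{m,n}(\cdot,w)\in\Hspace_{m,n}$; since $L_w$ projects $\chi K_m^1(\cdot,w)$ and $K_{m,n}(\cdot,w)$ is reproducing,
$$\|u_0\|_{mQ}^2\;=\;\langle u_0,L_w\rangle_{mQ}-u_0(w)\;=\;I_m(u_0,w)-u_0(w)\;=\;R_m(u_0,w),$$
so $\|u_0\|_{mQ}\le Cm^{-2}\e^{mQ(w)/2}$. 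A Bergman-ball sub-mean-value inequality, using the a priori diagonal bound $K_{m,n}(z,z)\e^{-mQ(z)}=\calO(m)$ (obtained independently), gives $|u_0(z)|^2\e^{-mQ(z)}\le Cm\|u_0\|_{mQ}^2$ for $z\in\D(z_0;\eps/2)$, hence $|u_0(z)|\e^{-m(Q(z)+Q(w))/2}\le Cm^{-3/2}$. Combined with the exponentially small discrepancy between $L_w$ and $\chi K_m^1(\cdot,w)$, this proves the theorem.

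The principal technical difficulty is the sub-principal Laplace expansion: the integrand has a complex phase whose $\d_z$-derivative does not vanish at $z=w$, so one must either invoke the almost-analytic contour-deformation machinery of \cite{BBS} or integrate by parts repeatedly in $\bar z$ to reduce to a genuine Gaussian. Pushing this expansion to the $m^{-1}$ order forces one to match the Hessian of the cubic correction to the phase against the first-order amplitude, and the verification that this cancellation occurs precisely when the amplitude equals $\bfun_1$ is essentially the content of identity \eqref{funrel}. A secondary difficulty is the uniformity of constants: the auxiliary weight used in the $\dbar$-correction must have its strict subharmonicity controlled in terms only of $M$ and $\tau$ via \eqref{gro}, and the a priori $\calO(m)$ diagonal bound must be established without circularity with the theorem itself.
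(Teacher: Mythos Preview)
Your overall strategy matches the paper's closely: both follow the local kernel construction of \cite{BBS} (the paper's Theorem~\ref{mlem}), cut off and $\dbar$-correct to land in $\Hspace_{m,n}$, and then compare with $K_{m,n}$. Your step~(iii) packages the comparison via the duality identity $\|u_0\|_{mQ}^2=R_m(u_0,w)$, whereas the paper (Lemma~\ref{lemma2}) applies the approximate reproducing property directly to $u=K_{m,n}(\cdot,z)$ and invokes the a~priori bound $K_{m,n}(z,z)\e^{-mQ(z)}\le Cm$ of Proposition~\ref{lemma1}; both routes give the same pointwise estimate. The paper obtains the reproducing error at order $m^{-3/2}$ (Theorem~\ref{mlem}), not $m^{-2}$ as you assert, but this is harmless since $m^{-3/2}$ already suffices after the sub-mean-value step.

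There is, however, a genuine gap in step~(ii). The auxiliary weight you propose, $\psi=mQ+2(n-1)\log(1+|z|^2)$, fails on both counts it is meant to address. First, condition~\eqref{gro} is a pointwise lower bound on $Q$, not a statement about $\Delta Q$; since $Q$ need not be subharmonic outside $\setX$, the quantity $m\Delta Q+2(n-1)(1+|z|^2)^{-2}$ can be negative and H\"ormander's estimate is inapplicable. Second, this weight grows at least like $(m\parT+2(n-1))\log|z|^2$ at infinity, far too fast to force the associated Bergman space into $\pspace_n$; it rather admits polynomials of arbitrarily high degree, so the minimal solution in $L^2_\psi$ need not lie in $L^2_{mQ,n}$. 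The paper's resolution (Section~\ref{l2estimates}, Theorem~\ref{boh} and Corollary~\ref{bh}) replaces $mQ$ by a weight built from the \emph{equilibrium potential}: $\wh\phi_{m,\apar,\bpar}=(m-\apar)\wh Q_\tau+\bpar\log(1+|z|^2)$, which is globally subharmonic (since $\wh Q_\tau$ is), has growth $\sim m\tau\log|z|^2$ enforcing the degree restriction via Lemma~\ref{putty}, and coincides with $mQ$ on $\setS_\tau$ up to a bounded additive error. This is the nontrivial input your outline is missing; once it is in place, the remaining steps go through essentially as you indicate.
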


\begin{proof} See Sect. \ref{proof}.
See also Remark \ref{bel} below. \end{proof}

\begin{rem} \label{bel}
We want to stress that Th. \ref{th3} has a long history; analogous
expansions are well-known for weighted Bergman kernels, see, for
instance, \cite{F}, \cite{dBMS}, \cite{J}, \cite{E}, \cite{BBS},
\cite{B}, and the references therein. Moreover, as we mentioned
above, Th. \ref{th3} is a slight modification of a more general
result in several complex variables stated by Berman \cite{B}, Th.
3.8, which may be obtained by adapting the methods from \cite{BBS}.
In our proof, we make frequent use of ideas and techniques developed
in \cite{B}, \cite{B2}, \cite{BBS}, and in the book \cite{S}.
\end{rem}

\begin{rem} (Simple properties of $K_m^1$.)

(1) Since $\bfun_0$, $\bfun_1$ and $\Qext$ are real on the
anti-diagonal, a suitable version of the reflection principle
implies that $K_m^1$ is Hermitian, that is, $\overline{
K_m^1(z,w)}=K_m^1(w,z)$.

(2) Using the Taylor series for $\Qext$ at $(z,\bar{z})$ (see
\eqref{psi2}) and ditto for $Q$ and $z$, we get
\begin{equation}\label{bbs}
2\re \Qext(z,\bar{w})-Q(z)-Q(w)=-\Delta
Q(w)\babs{w-z}^2+R(z,w),\end{equation} where
$R(z,w)=\calO\big(\babs{w-z}^3\big)$ for $z$, $w$ in a sufficiently
small neighbourhood of $z_0$, and the $\calO$ is uniform for $z_0$
in a fixed compact subset of $\setX$. In case $\Delta Q(z_0)>0$, it
follows that \begin{equation}\label{uniq}2\re
\Qext(z,\bar{w})-Q(z)-Q(w)\le-\delta_0\babs{w-z}^2,\qquad z,w\in
\D(z_0;2\eps),\end{equation} with $\delta_0=\frac12\Delta Q(z_0)>0$,
provided
that the positive number $\eps$ is chosen small enough. More
generally, if we fix a compact subset $K\Subset \setX$, and put
\begin{equation*}
\delta_0=\frac 1 2\inf_{\zeta\in K}\,\{\Delta
Q(\zeta)\},\end{equation*} we may find an $\eps>0$ such that
\eqref{uniq} holds for all $z_0\in K$. With a perhaps somewhat
smaller $\eps>0$, we may also ensure that
the functions $\bfun_0$ and $\bfun_1$ are bounded and holomorphic in
the set $\{(z,w);\, z,\bar{w}\in\D(z_0;2\eps),\, z_0\in K\}$.
We infer that
\begin{equation}\label{lead}\babs{K_m^1(z,w)}^2\e^{-m(Q(z)+Q(w))}
\le Cm^2\e^{-m\delta_0\babs{z-w}^2},\qquad z,w\in \D(z_0;2\eps),
\quad z_0\in K,
\end{equation}
with a number $C$ depending only on $K$ and $\eps$.
\end{rem}

\subsection{Extensions and possible generalizations.} We discuss
possible extensions of Th. \ref{th3} (and also of Th. \ref{th2}).
Again, these extensions
(at least of Th. \ref{th3}) are essentially implied by a result
[\cite{B}, Th. 3.8], stated by R. Berman.

For a given positive integer $k$, one may define a \textit{$k$-th
order approximating Bergman kernel}
\begin{equation*}K_m^k(z,w)=\lpar m\bfun_0(z,\bar w)+\bfun_1(z,\bar
w)+\ldots+m^{-k+1}\bfun_k(z,\bar w)\rpar\e^{m\Qext(z,\bar
w)},\end{equation*} for $z$ and $w$ in a neighbourhood of
$\adiag(\setX)$ where $\bfun_i$ are certain holomorphic functions
defined in a neighbourhood of $\adiag(\setX)$. It can be shown that
for $z_0\in\setX$ there exists $\eps>0$ such that
\begin{equation*}\babs{K_{m,n}(z,w)-K_m^k(z,w)}^2\e^{-m(Q(z)+Q(w))}\le
Cm^{-2k},\end{equation*} whenever $z,w\in\D(z_0;\eps)$ and $n\ge
m\tau-1$. The coefficients $\bfun_i$ can in principle be determined
from a recursion formula involving partial differential equations of
increasing order, compare with [Berman et al. \cite{BBS}, (2.15),
p.9], where a closely related formula is given. However, the
analysis required for calculating higher order coefficients
$\bfun_i$ for $i\ge 2$ seems to be quite involved, and the first
order approximation seems to be sufficient for many practical
purposes, cf. \cite{AHM}. We therefore prefer a more direct approach
here.

Th. \ref{th3} can also be generalized in another direction -- one
may relax the assumption that $Q$ be real-analytic, and instead
assume that e.g. $Q\in\calC^\infty(\C)$. In this case, the functions
$\bfun_i$ and $\Qext$ will be almost-holomorphic at the
anti-diagonal (cf. e.g. \cite{dBMS} for a relevant discussion of
such functions). The modifications needed for proving Th. \ref{th3}
in this more general case are based on standard arguments; they are
essentially as outlined in [\cite{BBS}, Subsect. 2.6 and p. 15]. We
leave the details to the interested reader. However, the reader
should note that, using this generalized version of Th. \ref{th3},
one may easily extend our proof of Th. \ref{th2}, to the case of
$\calC^\infty$-smooth weights.

\subsection{The Bargmann--Fock case and harmonic measure.} When
$z_0\in\C\setminus(\setS_\tau^\circ\cap \setX)$, Prop. \ref{prop1}
yields that the measures $B^{\langle z_0\rangle}_{m,n}$ tend to
concentrate on $\setS_\tau$ as
$m\to\infty$ and $n-m\tau\to 0$.
However our general results provide no further information regarding
the asymptotic distribution.

We now specialize to the Bargmann--Fock weight $Q(z)=\babs{z}^{2}$.
We then obviously have $\setX=\C$. It will be convenient to
introduce the functions (truncated exponentials)
\begin{equation*}E_k(z)=\sum_{j=0}^k\frac {z^j} {j!}.\end{equation*}
It is then easy to check that $K_{m,n}(z,w)=mE_{n-1}(mz\bar{w}),$
$\setS_\tau=\overline{\D}(0;\sqrt{\tau}),$ and
$\widehat{Q}_\tau(z)=\tau+\tau\log\lpar\babs{z}^2/\tau\rpar$ when
$\babs{z}^2\ge\tau.$ We infer that
\begin{equation}\label{bfock}\diff B^{\langle z_0\rangle}_{m,n}(z)=
m\frac{\babs{E_{n-1}(mz\bar{z}_0)}^2} {E_{n-1}(m\babs{z_0}^2)}
\e^{-m\babs{z}^2}\diff A(z).
\end{equation}
Let $\C^*=\C\cup\{\infty\}$ denote the extended plane.

\begin{thm}\label{th5} Let $Q(z)=\babs{z}^2$ and $z_0\in\C\setminus \setS_\tau$. Then, as
$m\to\infty$ and $n/m\to\tau$, the measures $B^{\langle
z_0\rangle}_{m,n}$ converge to harmonic measure at $z_0$ with
respect to $\C^*\setminus\setS_\tau$.
\end{thm}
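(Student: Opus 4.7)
The plan is to verify weak convergence of $B^{\langle z_0\rangle}_{m,n}$ to harmonic measure $\omega_{z_0}$ at $z_0$ relative to $\C^*\setminus\setS_\tau$ by testing against continuous bounded $f$ on $\C^*$. By Proposition \ref{prop1} it suffices to treat $f$ with compact support in $\C$, so the whole task reduces via the explicit formula \eqref{bfock} to an asymptotic analysis of $E_{n-1}(m\lambda)$ for $\lambda=z\bar{z}_0$ as $m\to\infty$ and $n/m\to\tau$.

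The key step is a steepest-descent analysis based on the contour representation
$$E_{n-1}(m\lambda)=\frac{1}{2\pi i}\oint\frac{e^{m\lambda s}}{(1-s)\,s^n}\,ds,$$
taken around $s=0$ but avoiding $s=1$. The exponent $m[\lambda s-(n/m)\log s]$ has saddle $s^*=(n/m)/\lambda$, which lies inside the unit disk whenever $|\lambda|>n/m$. Since $|z_0|>\sqrt\tau$ and $n/m\to\tau$, this strict inequality holds uniformly for $z$ in a neighbourhood of the critical circle $|z|=\sqrt\tau$, and the standard saddle-point expansion gives there
$$E_{n-1}(m\lambda)\sim\sqrt{\frac{n}{2\pi m^2}}\cdot\frac{1}{\lambda-n/m}\cdot(m\lambda/n)^n\,e^n.$$
Inserting this into \eqref{bfock}, squaring and dividing by $E_{n-1}(m|z_0|^2)$ produces the factor $(m|z|^2/n)^n$, yielding near the critical circle
$$\berd^{\langle z_0\rangle}_{m,n}(z)\sim\sqrt{\frac{m\tau}{2\pi}}\cdot\frac{|z_0|^2-n/m}{|z\bar{z}_0-n/m|^2}\cdot e^{m\,g_n(|z|)},\qquad g_n(r)=\tfrac{n}{m}\bigl(1+\log(r^2 m/n)\bigr)-r^2.$$
The function $g_n$ is maximised at $r=\sqrt{n/m}\to\sqrt\tau$ with maximum value $0$ and $g_n''(\sqrt{n/m})=-4$, so $e^{mg_n(|z|)}$ is a Gaussian of width $\sim m^{-1/2}$ concentrated on the shrinking circle $|z|=\sqrt{n/m}$.

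Outside a neighbourhood of this circle the density is negligible. In the small disk $|z|\le\tau/|z_0|$ (which lies strictly in $\setS_\tau^\circ$ since $|z_0|>\sqrt\tau$), combining the crude bound $|E_{n-1}(mz\bar{z}_0)|^2\le e^{2m\re(z\bar{z}_0)}$ with the saddle asymptotic for $E_{n-1}(m|z_0|^2)$ and the elementary inequality $(\mu-1)^2/\mu>\mu-1-\log\mu$ for $\mu>1$ (applied with $\mu=|z_0|^2/\tau$) gives exponential decay of the density in $m$; the transition annulus $|\lambda|\approx n/m$ contributes negligibly since $g_n$ is uniformly negative there, and the far tail $|z|>R$ is killed by $e^{-m|z|^2}$. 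Laplace's method in the radial direction then gives $\int e^{mg_n(r)}r\,dr\sim\sqrt{n/m}\cdot\sqrt{\pi/(2m)}$, which combined with the prefactor $\sqrt{m\tau/(2\pi)}$ produces the overall constant $\tau/2$. Using the identity $|\sqrt\tau\,e^{i\theta}\bar{z}_0-\tau|^2=\tau|z_0-\sqrt\tau\,e^{i\theta}|^2$, one arrives at
$$\int_\C f\,dB^{\langle z_0\rangle}_{m,n}\longrightarrow\frac{1}{2\pi}\int_0^{2\pi}f(\sqrt\tau\,e^{i\theta})\,\frac{|z_0|^2-\tau}{|z_0-\sqrt\tau\,e^{i\theta}|^2}\,d\theta=\int f\,d\omega_{z_0},$$
the right-hand side being the Poisson integral representation for harmonic measure on $\C^*\setminus\setS_\tau$.

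The principal technical obstacle is making the saddle-point expansion uniform in complex $\lambda$ over the relevant compact set with quantitative error sharper than the $m^{-1/2}$ loss from the subsequent Laplace integration. In our region of interest the saddle $s^*$ is kept at a fixed positive distance from the pole at $s=1$ — the transition $|\lambda|=n/m$ being well separated from the critical values $|\lambda|\approx\sqrt\tau\,|z_0|>\tau$ that arise for $z$ near $|z|=\sqrt\tau$ — so the uniform steepest-descent estimate is standard; the degenerate regime where $s^*$ merges with $s=1$ occurs safely outside the support of the limit measure and only requires crude bounds.
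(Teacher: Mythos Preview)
Your approach is correct in outline and genuinely different from the paper's. The paper never computes a pointwise asymptotic of the Berezin density; instead it tests $B_{m,n}^{\langle z_0\rangle}$ against the monomials $u(z)=z^{-j}$, where by orthogonality of $\{z^k\}$ in $L^2(\e^{-m|z|^2}\dA)$ the principal-value integral $\pv\int z^{-j}\,\diff B_{m,n}^{\langle z_0\rangle}$ collapses to an explicit ratio of truncated exponential sums, from which the limit $z_0^{-j}$ is read off directly. Mass in the inner disk $\D(0;r)$, $r<\sqrt\tau$, is then shown to vanish by a term-by-term estimate combined with Szeg\H{o}'s classical asymptotic for $E_l(lx)$, and a general test function is handled by F\'ejer approximation by harmonic polynomials in $z^{-1}$. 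Your route through the contour representation of $E_{n-1}$ and steepest descent is more analytic: it yields the actual local profile of $\berd_{m,n}^{\langle z_0\rangle}$ near the circle and makes the Poisson kernel emerge from an explicit Laplace computation, at the cost of having to control saddle-point errors uniformly in complex $\lambda$. The paper's moment approach sidesteps all such uniformity issues entirely, but gives less quantitative information about the density.

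One technical slip: the bound $|E_{n-1}(mz\bar z_0)|^2\le\e^{2m\re(z\bar z_0)}$ is false (take $z\bar z_0$ purely imaginary and large). The correct elementary bound $|E_{n-1}(w)|\le\e^{|w|}$ works instead: combined with the saddle asymptotic for the denominator it gives, on the disk $|z|\le\tau/|z_0|$, an exponent at most $m\tau[1-1/\mu-\log\mu]$ with $\mu=|z_0|^2/\tau>1$, which is strictly negative (the function $1-1/\mu-\log\mu$ vanishes at $\mu=1$ and has derivative $(1-\mu)/\mu^2<0$). So the conclusion stands, but the inequality $(\mu-1)^2/\mu>\mu-1-\log\mu$ you quote does not match the comparison actually needed and appears to be left over from a different calculation.
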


\begin{proof} See Sect. \ref{new}. \end{proof}

\begin{rem} The above theorem is a special case of Theorem 8.7 in \cite{AHM}.
\end{rem}

\subsection{Weighted $L^2$ estimates for the $\dbar$-equation with a
growth constraint.} Our analysis depends in an essential way on
having a good control for the norm-minimal solution $u_*$ in the
space $L^2_{mQ,n}$ to the $\dbar$-equation $\dbar u=f$, where $f$ is
a given suitable test-function such that $\supp f\subset
\setS_\tau\cap\setX$. We would not have a problem if $u_*$ were just
required to be of class $L^2_{mQ}$ (and not $L^2_{mQ,n}$), for then
an elementary version of the well-known $L^2$ estimates of
H\"ormander apply, see e.g. \cite{H} or \cite{H2}. The requirement
that $u_*$ be of restricted growth gives rise to some difficulties,
but we shall see in Sect. \ref{l2estimates} (notably Th. \ref{strb})
that the classical estimates can be adapted to our situation.
(Estimates very similar to ours were given independently by Berman in \cite{B3} after this note was completed.)

\section{Preparatory lemmas} \label{sec3}

\subsection{Preliminaries.} In this section, we prove Prop. \ref{prop1}. At the
same time we will get the opportunity to display some elementary
inequalities which will be used in later sections.
For the
results obtained here, it is enough to suppose that $Q$ is
$\calC^{1,1}$-smooth (and satisfies the growth assumption
\eqref{gro}). Thus $\Delta Q$ makes sense almost everywhere and is
of class $L^\infty_{\rm loc}(\C)$.

\subsection{Subharmonicity estimates.} We start by giving two
simple lemmas which are based on the sub-mean property of
subharmonic functions.

\begin{lem} \label{berndt} Let $\phi$ be a weight of class
$\calC^{1,1}(\C)$ and put
$A=\esssup\{\Delta\phi(\zeta);\,\zeta\in\D\}.$ Then if $u$ is
bounded and holomorphic in $\D$, we have that
$$|u(0)|^2\e^{-\phi(0)}\le\int_\D|u(\zeta)|^2\e^{-\phi(\zeta)}\e^{A|\zeta|^2}\dA(\zeta)
\le\e^A\int_\D|u(\zeta)|^2\e^{-\phi(\zeta)}\dA(\zeta).$$
\end{lem}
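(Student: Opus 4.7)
The plan is to apply the sub-mean value property for subharmonic functions to the auxiliary function
$$v(\zeta) = \babs{u(\zeta)}^2 \e^{-\phi(\zeta) + A\babs{\zeta}^2}.$$

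The first step would be to verify that $v$ is subharmonic on $\D$. Writing $\log v = 2\log\babs{u} + (A\babs{\zeta}^2 - \phi)$, the first summand is subharmonic because $u$ is holomorphic. For the second summand, which is of class $\calC^{1,1}$, the distributional Laplacian equals $A - \Delta\phi$ almost everywhere; this is $\ge 0$ by the very definition of $A$, together with the identity $\Delta\babs{\zeta}^2 = 1$ (recall $\Delta=\d\dbar$). Hence $\log v$ is subharmonic on $\D$, and since $x\mapsto \e^x$ is convex and nondecreasing, $v$ itself is subharmonic.

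The second step is to apply the sub-mean inequality over the unit disk. With our normalization $\dA = \diff x\,\diff y/\pi$, the classical sub-mean inequality for a subharmonic function on $\D$ takes the form $v(0) \le \int_\D v\,\dA$, which is exactly the first inequality of the lemma. The second inequality is trivial: for $\zeta \in \D$ we have $\e^{A\babs{\zeta}^2} \le \e^{A}$, so the factor $\e^{A\babs{\zeta}^2}$ can be pulled out of the integral.

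I do not foresee any real obstacle; the only delicate point is that $\log v$ attains $-\infty$ at the zeros of $u$, but $\log\babs{u}$ remains subharmonic at such points in the standard distributional sense, and $v = \e^{\log v}$ extends continuously (equal to $0$ at these points), so the sub-mean argument goes through unchanged.
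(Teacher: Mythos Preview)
Your proposal is correct and essentially identical to the paper's own proof: both introduce the same auxiliary function $F(\zeta)=|u(\zeta)|^2\e^{-\phi(\zeta)+A|\zeta|^2}$, observe that $\Delta\log F\ge 0$ a.e.\ so that $F$ is log-subharmonic and hence subharmonic, and then apply the sub-mean inequality over $\D$ (with the normalized area $\dA$) to obtain the first estimate, the second being the trivial bound $\e^{A|\zeta|^2}\le \e^A$ on $\D$. Your added remark about the zeros of $u$ is a reasonable bit of extra care that the paper leaves implicit.
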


\begin{proof} Consider the function
$F(\zeta)=|u(\zeta)|^2\e^{-\phi(\zeta)+A|\zeta|^2},$ which satisfies
$$\Delta \log F=\Delta\log|u|^2-\Delta\phi+A\ge0\qquad \text{a.e.\, on}\quad \D,$$
making $F$ logarithmically subharmonic. But then $F$ is subharmonic
itself, and so
$$F(0)\le\int_\D F(z)\dA(z).$$
The assertion of the lemma is immediate from this.
\end{proof}

\begin{lem} \label{lemm2} Let $Q\in \calC^{1,1}(\C)$ and fix $\delta>0$ and $z\in\C$ and put
$A=A_{z,\delta}=\esssup\{\Delta Q(\zeta);\, \zeta\in\D(z;\delta)\}.$
Also, let $u$ be holomorphic and bounded in $\D(z;\delta/\sqrt{m})$.
Then
\begin{equation*}\babs{u(z)}^2e^{-mQ(z)}\le
m\delta^{-2}{\e^{A\delta^2}}
\int_{\D(z;\delta/\sqrt{m})}\babs{u(\zeta)}^2e^{-mQ(\zeta)}\dA(\zeta).
\end{equation*}
\end{lem}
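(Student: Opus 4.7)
The plan is to deduce this scaled estimate from Lemma \ref{berndt} by a change of variables that pulls back the disk $\D(z;\delta/\sqrt m)$ to the unit disk $\D$.

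First I would set $\zeta = z + (\delta/\sqrt m)\omega$ and define the rescaled holomorphic function $v(\omega) = u\bigl(z+(\delta/\sqrt m)\omega\bigr)$ together with the rescaled weight $\phi(\omega) = mQ\bigl(z+(\delta/\sqrt m)\omega\bigr)$. Since differentiation $\d/\d\omega$ picks up a factor $\delta/\sqrt m$, and $\dbar/\d\bar\omega$ picks up a factor $\delta/\sqrt m$ as well, the chain rule gives
\begin{equation*}
\Delta_\omega\phi(\omega) = m\cdot\frac{\delta^2}{m}\,\Delta Q\bigl(z+(\delta/\sqrt m)\omega\bigr) = \delta^2\,\Delta Q\bigl(z+(\delta/\sqrt m)\omega\bigr).
\end{equation*}
Assuming $m\ge 1$ (the only case of interest; otherwise $\D(z;\delta/\sqrt m)\supset\D(z;\delta)$ and one enlarges $A$ trivially), we have $\D(z;\delta/\sqrt m)\subset\D(z;\delta)$, so
\begin{equation*}
\esssup_{\omega\in\D}\Delta_\omega\phi(\omega)\le \delta^2 A.
\end{equation*}

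Next, I would apply Lemma \ref{berndt} to the holomorphic function $v$ and weight $\phi$ on $\D$. This yields
\begin{equation*}
|v(0)|^2\e^{-\phi(0)}\le \e^{\delta^2 A}\int_\D |v(\omega)|^2\e^{-\phi(\omega)}\,\dA(\omega).
\end{equation*}
By construction $v(0) = u(z)$ and $\phi(0) = mQ(z)$, so the left-hand side is exactly $|u(z)|^2\e^{-mQ(z)}$. On the right-hand side I would change variables back by $\zeta = z + (\delta/\sqrt m)\omega$; since $\dA = \diff x\diff y/\pi$, the Jacobian factor is $\delta^2/m$, so $\dA(\omega) = (m/\delta^2)\dA(\zeta)$. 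Substituting gives
\begin{equation*}
|u(z)|^2\e^{-mQ(z)}\le \e^{\delta^2 A}\cdot\frac{m}{\delta^2}\int_{\D(z;\delta/\sqrt m)}|u(\zeta)|^2\e^{-mQ(\zeta)}\,\dA(\zeta),
\end{equation*}
which is the desired inequality.

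There is no real obstacle here; the only bookkeeping point to watch is the Jacobian of the affine rescaling and the transformation of the Laplacian, both of which produce precisely the factor $\delta^2/m$ that makes the bound on $\Delta_\omega\phi$ come out as $\delta^2 A$ and produces the prefactor $m/\delta^2$ in front of the integral.
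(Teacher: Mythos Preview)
Your proof is correct and follows exactly the same route as the paper: the paper's proof is a single sentence instructing the reader to make the change of variables $\zeta=z+\delta\xi/\sqrt{m}$ and apply Lemma~\ref{berndt} with $\phi(\xi)=mQ(\zeta)$, which is precisely what you carry out in detail. Your remark about needing $m\ge 1$ so that $\D(z;\delta/\sqrt m)\subset\D(z;\delta)$ is a fair observation (the paper leaves this implicit, as only $m\ge 1$ is ever used).
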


\begin{proof} The assertion follows if we make the
change of variables $\zeta=z+\delta\xi/\sqrt{m}$ where $\zeta\in
\D(z;\delta/\sqrt{m})$ and $\xi\in\D$, and apply Lemma \ref{berndt}
with the weight $\phi(\xi)=mQ(\zeta)$.
\end{proof}

\noindent We note the following consequence of the subharmonicity
estimates. We will frequently need it in later sections.

\begin{lem} \label{lemm3} Let $K$ be a compact subset of $\C$ and
$\delta$ a given positive number. We put
\begin{equation*}K_\delta=\{z\in\C;\, \dist(z,K)\le \delta\}\qquad \text{and}
\qquad A=\esssup\{\Delta Q(z);\, z\in K_\delta\}.\end{equation*}
Then, for all $m,n\ge 1$,
\begin{equation*}\babs{K_{m,n}(z,w)}^{2}\e^{-mQ(z)}\le m\delta^{-2}\e^{\delta^2
A}\int_{\D(z;\delta/\sqrt{m})}\babs{K_{m,n}(\zeta,w)}^2\e^{-mQ(\zeta)}\dA(\zeta),\quad
z\in K,\quad w\in \C.\end{equation*}
\end{lem}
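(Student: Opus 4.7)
The plan is to derive Lemma \ref{lemm3} as an almost immediate corollary of Lemma \ref{lemm2}. Fix $w \in \C$ and consider the function $u_w(\zeta) = K_{m,n}(\zeta, w)$. Since $K_{m,n}(\cdot, w) \in \Hspace_{m,n} \subset \pspace_n$, the map $u_w$ is an analytic polynomial of degree at most $n-1$, hence holomorphic and bounded on every disk $\D(z;\delta/\sqrt{m})$. Therefore Lemma \ref{lemm2} applies to $u_w$ for each $z \in K$.

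The only thing to check is that the constant $A$ in the statement of Lemma \ref{lemm3} dominates the constant $A_{z,\delta}$ appearing in Lemma \ref{lemm2}. If $z \in K$ and $\zeta \in \D(z;\delta)$, then $\dist(\zeta,K) \le |\zeta - z| \le \delta$, so $\zeta \in K_\delta$; hence
\[
A_{z,\delta} = \esssup\{\Delta Q(\zeta) : \zeta \in \D(z;\delta)\} \le \esssup\{\Delta Q(\zeta) : \zeta \in K_\delta\} = A.
\]
Substituting this bound into the conclusion of Lemma \ref{lemm2} applied to $u_w$ yields exactly the asserted inequality, uniformly in $z \in K$ and $w \in \C$. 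No step here poses any real obstacle; the content of the lemma is essentially the observation that the reproducing kernel, viewed as a function of its first argument, is a polynomial and so may be fed into the pointwise subharmonicity estimate.
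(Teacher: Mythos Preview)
Your proof is correct and follows exactly the paper's own approach: the paper's proof consists of the single sentence ``Apply Lemma \ref{lemm2} to $u(\zeta)=K_{m,n}(\zeta,w)$.'' Your additional remarks verifying that $u_w$ is entire (hence bounded on disks) and that $A_{z,\delta}\le A$ for $z\in K$ are the routine checks implicit in that sentence.
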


\begin{proof} Apply Lemma \ref{lemm2} to $u(\zeta)=K_{m,n}(\zeta,w)$.
\end{proof}

\subsection{A weak maximum principle for weighted polynomials.}
Maximum principles for weighted polynomials have a long history, see
e.g. \cite{ST}, Chap. III. The following simple lemma will suffice
for our present purposes; it is a consequence of \cite{ST}, Th.
III.2.1. (Recall that $\setS_\tau$ denotes the coincidence set
$\{Q=\wh{Q}_\tau\}$, see \eqref{coincidence}.)

\begin{lem} \label{wmax} Suppose that a polynomial $u$ of degree at most
$n-1$ satisfies $\babs{u(z)}^2\e^{-mQ(z)}\le 1$ on
$\setS_{\tau(m,n)}$, where $\tau(m,n)=(n-1)/m<\parT$. Then
$\babs{u(z)}^2\e^{-m\wh{Q}_{\tau(m,n)}(z)}\le 1$ on $\C$.
\end{lem}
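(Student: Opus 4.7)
The strategy is to pass to the subharmonic function $v(z):=\frac{1}{m}\log|u(z)|^{2}$ and propagate the bound $v\le\wh{Q}_\tau$ from $\setS_\tau$ to its exterior by a Phragm\'en--Lindel\"of argument, where I write $\tau=\tau(m,n)=(n-1)/m$. The conclusion of the lemma is equivalent to $v\le\wh{Q}_\tau$ on all of $\C$. Since $u$ has degree at most $n-1$, the function $v$ is subharmonic on $\C$ (with $-\infty$ at zeros of $u$) and satisfies $v(z)\le\tau\log|z|^{2}+\calO(1)$ as $z\to\infty$, so $v\in\SH_\tau$.

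On the coincidence set $\setS_\tau$ the hypothesis reads $v\le Q$, and since $Q=\wh{Q}_\tau$ there, this immediately gives $v\le\wh{Q}_\tau$ on $\setS_\tau$. The real work lies in the exterior domain $\Omega:=\C\setminus\setS_\tau$. On $\Omega$, Lemma \ref{drspock} guarantees that $\wh{Q}_\tau$ is harmonic, so $g:=v-\wh{Q}_\tau$ is subharmonic on $\Omega$; on $\partial\Omega\subset\setS_\tau$ one has $g\le 0$ (using upper semi-continuity of $v$ and continuity of $\wh{Q}_\tau$); and since $v\in\SH_\tau$ while \eqref{qtau} gives $\wh{Q}_\tau(z)=\tau\log|z|^{2}+\calO(1)$, the difference $g$ is uniformly bounded above on $\Omega$.

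The decisive step is to run Phragm\'en--Lindel\"of in the exterior of the compact set $\setS_\tau$, which has positive logarithmic capacity because it carries the equilibrium measure of finite energy. The standard execution uses the Green's function $g_\Omega(\cdot,\infty)$ of $\Omega$ with pole at infinity: the auxiliary function $g_\varepsilon:=g-\varepsilon\, g_\Omega(\cdot,\infty)$ is subharmonic on $\Omega$, tends to $-\infty$ at infinity (since $g$ is bounded above while $g_\Omega(\cdot,\infty)\sim\log|z|$), and has $\limsup\le 0$ at finite (regular) boundary points; the classical maximum principle then gives $g_\varepsilon\le 0$ in $\Omega$, and letting $\varepsilon\to 0^{+}$ yields $g\le 0$.

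The main obstacle is the critical case in which $u$ has degree exactly $n-1$: then $v$ and $\wh{Q}_\tau$ share identical leading asymptotics $\tau\log|z|^{2}$ at infinity, so $g$ is only bounded above rather than decaying, and a naive exterior maximum principle would leave an uncontrolled additive constant. The Phragm\'en--Lindel\"of trick above is precisely what absorbs that constant. All of this is packaged in \cite{ST}, Theorem III.2.1, which is the direct route the paper takes.
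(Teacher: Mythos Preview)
Your proof is correct and takes essentially the same approach as the paper: the paper's proof simply observes that $q(z)=m^{-1}\log|u(z)|^2$ lies in $\SH_{\tau(m,n)}$ with $q\le Q$ on $\setS_{\tau(m,n)}$ and then invokes \cite{ST}, Theorem~III.2.1, whereas you unpack the Phragm\'en--Lindel\"of argument that underlies that citation. As you yourself note in your final sentence, the two routes coincide.
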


\begin{proof} Put $q(z)=m^{-1}\log\babs{u(z)}^2$. The assumptions
on $u$ imply that $q\in\SH_{\tau(m,n)}$ and that $q\le Q$ on
$\setS_{\tau(m,n)}$. Hence $q\le\wh{Q}_{\tau(m,n)}$ on $\C$, as
desired.
\end{proof}

\begin{lem} \label{lemm4} Let
$u\in\Hspace_{m,n}$, and suppose that $n\le m\tau+1$. Then
\begin{equation*}\babs{u(z)}^2\le m\e^A\|u\|_{mQ}^2\e^{m\wh{Q}_\tau(z)},
\end{equation*}
where $A$ denotes the essential supremum of $\Delta Q$ over the set
$\{z\in\C;\, \dist(z,\setS_\tau)\le 1\}$.
\end{lem}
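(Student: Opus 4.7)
The plan is to combine the subharmonicity estimate of Lemma~\ref{lemm2} with the weak maximum principle of Lemma~\ref{wmax}, exploiting the fact that under the hypothesis $n\le m\tau+1$ the natural parameter $\tau(m,n):=(n-1)/m$ satisfies $\tau(m,n)\le\tau<\parT$. The two monotonicities $\setS_{\tau'}\subset\setS_\tau$ and $\widehat Q_{\tau'}\le\widehat Q_\tau$ whenever $\tau'\le\tau$ (both immediate from $\SH_{\tau'}\subset\SH_\tau$) will then allow one to reconcile the output of Lemma~\ref{wmax}, which is phrased in terms of $\tau(m,n)$, with the conclusion phrased in terms of the fixed $\tau$.

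The first step is to establish the bound on $\setS_\tau$ itself. Fix $z_0\in\setS_\tau$ and apply Lemma~\ref{lemm2} with $\delta=1$: since $\D(z_0;1/\sqrt m)\subset\{z\in\C;\,\dist(z,\setS_\tau)\le 1\}$ for $m\ge 1$, the essential supremum of $\Delta Q$ on the relevant disk is dominated by the constant $A$ in the statement, so
$$|u(z_0)|^2\,\e^{-mQ(z_0)}\le m\,\e^A\int_{\D(z_0;1/\sqrt m)}|u|^2\,\e^{-mQ}\,\dA\le m\,\e^A\,\|u\|_{mQ}^2.$$
Equivalently, the normalized polynomial $v:=(m\,\e^A\,\|u\|_{mQ}^2)^{-1/2}\,u$, which still has degree at most $n-1$, satisfies $|v|^2\,\e^{-mQ}\le 1$ pointwise on $\setS_\tau$.

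Next I would invoke Lemma~\ref{wmax}. Since $\setS_{\tau(m,n)}\subset\setS_\tau$, the inequality $|v|^2\,\e^{-mQ}\le 1$ holds a fortiori on $\setS_{\tau(m,n)}$, and since $\tau(m,n)<\parT$, Lemma~\ref{wmax} applied to $v$ gives $|v|^2\,\e^{-m\widehat Q_{\tau(m,n)}}\le 1$ on all of $\C$. Finally, $\widehat Q_{\tau(m,n)}\le\widehat Q_\tau$ yields $|v|^2\le \e^{m\widehat Q_{\tau(m,n)}}\le \e^{m\widehat Q_\tau}$ on $\C$, and undoing the normalization delivers
$$|u(z)|^2\le m\,\e^A\,\|u\|_{mQ}^2\,\e^{m\widehat Q_\tau(z)},\qquad z\in\C,$$
as claimed.

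The argument is entirely routine and involves no serious obstacle; the only point that requires care is the double use of monotonicity in the parameter $\tau$, which is needed precisely because Lemma~\ref{wmax} intrinsically produces the equilibrium potential attached to $\tau(m,n)=(n-1)/m$ rather than the fixed $\tau$ of the statement.
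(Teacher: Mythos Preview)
Your proof is correct and follows essentially the same route as the paper: apply Lemma~\ref{lemm2} with $\delta=1$ to obtain the pointwise bound on $\setS_\tau$, pass to the smaller set $\setS_{\tau(m,n)}$, invoke Lemma~\ref{wmax}, and finish using $\widehat Q_{\tau(m,n)}\le\widehat Q_\tau$. The explicit normalization to $v$ is a cosmetic variation, but the logic and the key monotonicity observations are identical.
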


\begin{proof} The assertion that $n\le m\tau+1$ is equivalent to that
$\tau(m,n)\le \tau$ where $\tau(m,n)=(n-1)/m$. Thus
$\setS_{\tau(m,n)}\subset\setS_\tau$ and $\wh{Q}_{\tau(m,n)}\le
\wh{Q}_\tau$. An application of Lemma \ref{lemm2} with $\delta=1$
now gives
\begin{equation*}\babs{u(z)}^2\e^{-mQ(z)}
\le m\e^A\int_{\D(z;1)}\babs{u(\zeta)}^2
\e^{-mQ(\zeta)}\dA(\zeta)\le
m\e^A\int_\C\babs{u(\zeta)}^2\e^{-mQ(\zeta)} \dA(\zeta),\quad
z\in\setS_{\tau}.
\end{equation*}
As a consequence, the same estimate holds on $\setS_{\tau(m,n)}$.
We can thus apply Lemma \ref{wmax}. It yields that
\begin{equation*}\babs{u(z)}^2\le m\e^A\|u\|_{mQ}^2\e^{m\wh{Q}_{\tau(m,n)}(z)},
\quad z\in\C.\end{equation*}
The desired assertion follows, since $\wh{Q}_{\tau(m,n)}\le\wh{Q}_\tau$.
\end{proof}

\subsection{The proof of Proposition \ref{prop1}} Fix two points $z$ and
$z_0$ in $\C$. We apply Lemma \ref{lemm4} to the polynomial
\begin{equation*}u(\zeta)=\frac {K_{m,n}(\zeta,z_0)}{\sqrt{K_{m,n}(z_0,z_0)}},
\end{equation*}
which is of class $\Hspace_{m,n}$ and satisfies $\|u\|_{mQ}=1$. It
yields that $\babs{u(z)}^2\le m\e^A\e^{\wh{Q}_\tau(z)}$, or
\begin{equation}\label{cerd}
\berd_{m,n}^{\langle w\rangle}(z)=\babs{u(z)}^2\e^{-mQ(z)} \le
m\e^{A}\e^{m(\wh{Q}_\tau(z)-Q(z))},\quad z\in\C,\, n\le
m\tau+1.\end{equation}

Now let $\Lambda$ be an open neighborhood of $\setS_\tau$. Since
$Q>\widehat{Q}_\tau$ on $\C\setminus \setS_\tau$, the continuity of
the functions involved coupled with the growth conditions
\eqref{qtau} and \eqref{gro} yield that $Q-\widehat{Q}_\tau$ is
bounded below by a positive number on $\C\setminus\Lambda$. It
follows that \begin{equation*}B_{m,n}^{\langle
z_0\rangle}(\C\setminus\Lambda)=
\int_{\C\setminus\Lambda}\berd^{\langle z_0\rangle}_{m,n}\diff A\to
0\end{equation*} as
$m\to\infty$ and $n\le m\tau+1$.
Since $B_{m,n}^{\langle z_0\rangle}$ is a p.m. on $\C$, the
assertion of Prop. \ref{prop1} follows. \hfill$\qed$

\subsection{An estimate for the one-point function.} Our proof of
Prop. \ref{prop1} implies a useful estimate for the one-point
function $z\mapsto K_{m,n}(z,z)\e^{-mQ(z)}$. The following result is
implicit in \cite{B}.

\begin{prop}\label{lemma1} Suppose that $n\le m\tau+1$.
Then there exists a number $C$ depending only on $\tau$ such that
\begin{equation}\label{berg1}K_{m,n}(z,z)\e^{-mQ(z)}\le Cm
\e^{-m(Q(z)-\wh{Q}_\tau(z))},\quad z\in\C,\end{equation}
\begin{equation}\label{berg2}\babs{K_{m,n}(z,w)}^2\e^{-m(Q(z)+Q(w))}\le
Cm^2\e^{-m(Q(z)-\wh{Q}_\tau(z))}\e^{-m(Q(w)-\wh{Q}_\tau(w))},\quad
z,w\in\C.
\end{equation}
In particular, $\babs{K_{m,n}(z,w)}^2\e^{-m(Q(z)+Q(w))}\le Cm^2$ for
all $z$ and $w$.
\end{prop}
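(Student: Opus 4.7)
The estimate \eqref{cerd} obtained in the course of proving Proposition~\ref{prop1} already contains essentially all the information we need; the task is merely to unpack it. Recall that \eqref{cerd} reads
\begin{equation*}
\berd_{m,n}^{\langle w\rangle}(z) = \frac{\babs{K_{m,n}(z,w)}^2}{K_{m,n}(w,w)}\,\e^{-mQ(z)} \le m\,\e^{A}\,\e^{m(\wh{Q}_\tau(z)-Q(z))},
\end{equation*}
valid for all $z,w\in\C$ and all $n\le m\tau+1$, where $A$ depends only on $\tau$ (via the $1$-neighborhood of $\setS_\tau$). I will set $C:=\e^A$.

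\textbf{Step 1 (diagonal bound).} Specialize $w=z$ in the displayed inequality. Since the factor $\babs{K_{m,n}(z,z)}^2/K_{m,n}(z,z) = K_{m,n}(z,z)$, this yields directly
\begin{equation*}
K_{m,n}(z,z)\,\e^{-mQ(z)} \le Cm\,\e^{-m(Q(z)-\wh{Q}_\tau(z))},
\end{equation*}
which is \eqref{berg1}.

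\textbf{Step 2 (off-diagonal bound).} For general $z,w$, rewrite the Berezin kernel bound as
\begin{equation*}
\babs{K_{m,n}(z,w)}^2\,\e^{-mQ(z)} \le Cm\, K_{m,n}(w,w)\,\e^{-m(Q(z)-\wh{Q}_\tau(z))},
\end{equation*}
and multiply through by $\e^{-mQ(w)}$. Using Step 1 to control $K_{m,n}(w,w)\e^{-mQ(w)}$ gives
\begin{equation*}
\babs{K_{m,n}(z,w)}^2\,\e^{-m(Q(z)+Q(w))} \le C^2 m^2\,\e^{-m(Q(z)-\wh{Q}_\tau(z))}\,\e^{-m(Q(w)-\wh{Q}_\tau(w))},
\end{equation*}
which is \eqref{berg2} after renaming the constant.

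\textbf{Step 3 (uniform bound).} Since $\wh{Q}_\tau\le Q$ on $\C$ by the very definition of the equilibrium potential, both exponential factors on the right side of \eqref{berg2} are bounded by $1$, yielding the final assertion $\babs{K_{m,n}(z,w)}^2\e^{-m(Q(z)+Q(w))}\le Cm^2$.

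There is no real obstacle here; the only subtlety is bookkeeping of constants, and observing that the constant $A$ from Lemma~\ref{lemm4} (the sup of $\Delta Q$ on a $1$-neighborhood of $\setS_\tau$) depends only on $\tau$ (and on $Q$, which is fixed throughout), so the same is true of $C$.
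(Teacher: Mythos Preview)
Your proof is correct and follows essentially the same route as the paper: specialize \eqref{cerd} to the diagonal for \eqref{berg1}, bootstrap to \eqref{berg2}, and use $\wh{Q}_\tau\le Q$ for the uniform bound. The only cosmetic difference is that for \eqref{berg2} the paper invokes the Cauchy--Schwarz inequality $\babs{K_{m,n}(z,w)}^2\le K_{m,n}(z,z)K_{m,n}(w,w)$ and applies \eqref{berg1} to each factor, whereas you reuse \eqref{cerd} directly in its off-diagonal form together with one application of \eqref{berg1}; both arguments yield the same constant $C^2=\e^{2A}$.
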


\begin{proof} The
function $\berd_{m,n}^{\langle z_0\rangle}(z)$ in the diagonal case
$z=z_0$ reduces to
\begin{equation*}\berd_{m,n}^{\langle z\rangle}(z)=K_{m,n}(z,z)\e^{-mQ(z)}.
\end{equation*}
Thus the estimate \eqref{cerd} implies
\begin{equation*}K_{m,n}(z,z)\e^{-mQ(z)}\le m\e^A\e^{m(\wh{Q}_\tau(z)-Q(z))},
\end{equation*}
which proves \eqref{berg1}. In order to get \eqref{berg2} it now
suffices to use the Cauchy--Schwarz inequality
$\babs{K_{m,n}(z,w)}^2\le K_{m,n}(z,z)K_{m,n}(w,w)$ and apply
\eqref{berg1} to the two factors in the right hand side.
\end{proof}

\subsection{Cut-off functions.} \label{cutoff} We will in the following
frequently use cut-off functions with various properties. For
convenience of the reader who may not be familiar with these
details, we collect the necessary facts here.

Given any numbers $\delta>0$, $r>0$ and $C>1$, there exists $\chi\in
\coity(\C)$ such that $\chi=1$ on $\D(0;\delta)$, $\chi=0$ outside
$\D(0;\delta(1+r))$, $\chi\le 1$ and $|\dbar\chi|^2\le (C/r^2)\chi$
on $\C$. \footnote{Such a $\chi$ can be obtained by standard
regularization applied to the Lipschitzian function
which equals $(1-(\delta^{-1}\babs{z}-1)/r)^2$ for $1\le\babs{z}\le
r$, and is otherwise locally constant on $\C$.} Moreover, with such
a choice for $\chi$, it follows that $|\dbar\chi|^2\le
(C/r^2)\delta^{-2}\chi$ on $\C$. It is then easy to check that
$\|\dbar\chi\|_{L^2}^2\le C(1+2/r)$.

Later on, we will often use the values $\delta=3\eps/2$ and
$\delta(1+r)=2\eps$, where $\eps>0$ is given. We may then arrange
that
$\|\dbar\chi\|_{L^2}\le 3$.

\section{Weighted estimates for the $\dbar$-equation with a
growth constraint}\label{l2estimates}

\subsection{General introduction; the Bergman projection and the
$\dbar$-equation.} \label{init} Let $\phi$ be a weight on $\C$ (cf.
Subsect. \ref{weigh}). We assume throughout that $\phi$ is of class
$\calC^{1,1}(\C)$ (so that $\Delta\phi\in L^\infty_{{\rm
loc}}(\C)$), and that $\int_\C\e^{-\phi}\diff A<\infty$ (so that
$L^2_\phi$ contains the constant functions). Also fix a positive
integer $n$ and recall the definition of the "truncated'' spaces
$L^2_{\phi,n}$ and $A^2_{\phi,n}$ (see \eqref{ospaces} and
\eqref{os2}).

 Let $K_{\phi,n}$ denote the
reproducing kernel for $A^2_{\phi,n}$, and let
$\proj_{\phi,n}:L^2_\phi\to A^2_{\phi,n}$ be the orthogonal
projection,
\begin{equation}\label{ooproj}\proj_{\phi,n}u(w)=\int_\C
u(z)K_{\phi,n}(w,z)\e^{-\phi(z)}\dA(z),\quad u\in
L^2_{\phi}.\end{equation} We will in later sections frequently need
to estimate $\proj_{\phi,n}u(w)$, especially when $u$ is holomorphic
in a neighbourhood of $w$.

Now note that for $f$ in the class $\calC_0(\C)$ (continuous
functions with compact support), the Cauchy transform $u=\cauchy f$
given by
\begin{equation*}\cauchy f(w)=\int_\C\frac {f(z)} {w-z}
\dA(z),\end{equation*} satisfies the $\dbar$-equation
\begin{equation}\label{dstreck}\dbar u=f.\end{equation}
Moreover, $u=\cauchy f$ is bounded, and is therefore of class
$L^2_{\phi,n}$ for any $n\ge 1$.
Thus the function
\begin{equation}\label{star}u_{*}(w)=u(w)-\proj_{\phi,n}u(w),\end{equation}
solves \eqref{dstreck} and is of class $L^2_{\phi,n}$. It is easy to
verify that $u_{*}$ defined by \eqref{star} is the unique
norm-minimal solution to \eqref{dstreck} in $L^2_{\phi,n}$ whenever
$u\in L^2_{\phi,n}$ is a solution to \eqref{dstreck}. Hence the
study of the orthogonal projection $\proj_{\phi,n}u$ is equivalent
to the study of the \textit{the $L^2_{\phi,n}$-minimal} solution
$u_{*}$ to \eqref{dstreck}.

It is useful to observe that $u_{*}$ is characterized amongst the
solutions of class $L^2_{\phi,n}$ to \eqref{dstreck} by the
condition $u_{*}\bot A^2_{\phi,n}$, or
\begin{equation}\label{orthog}
\int_\C u_{*}(z)\overline{h(z)}\e^{-\phi(z)}\dA(z)=0,\quad\text{for
all}\quad h\in A^2_{\phi,n}.\end{equation}

Our
principal result in this section, Th. \ref{strb}, states that a
variant of the elementary one-dimensional form of the $L^2$
estimates of H\"ormander are valid for $u_{*}$. The important
results for the further developments in this paper are however Th.
\ref{boh} and Cor. \ref{bh}. The reader may consider to glance at
those results and skip to the next section, at a first read.

\subsection{$L^2$ estimates.} The
$L^2$ estimates of H\"{o}rmander in the most elementary,
one-dimensional form applies only to weights $\phi$ which are
strictly subharmonic in the entire plane $\C$. The result states
that $u_0$, the $L^2_\phi$-minimal solution to \eqref{dstreck}
(where $f\in\calC_0(\C)$) satisfies
\begin{equation}\label{hoe0}\int_\C\babs{u_0}^2\e^{-\phi}\dA\le\int_\C
\babs{f}^2\frac {\e^{-\phi}} {\Delta\phi}\dA,\end{equation} provided
that $\phi$ is $\calC^2$-smooth on $\C$. See \cite{H}, eq. (4.2.6),
p. 250 (this is essentially just Green's formula).

It is important to observe that the estimate \eqref{hoe0} remains
valid for strictly subharmonic weights $\phi$ in the larger class
$\calC^{1,1}(\C)$ (that $\phi$ is strictly subharmonic then means
that $\Delta\phi>0$ a.e. on $\C$). The proof in \cite{H}, Subsect.
4.2 goes through without changes in this more general case.

\subsection{Weighted $L^2_{\phi,n}$ estimates.} We have the following theorem, where
we consider two weights $\phi$ and $\wh{\phi}$ with various
properties. The practically minded reader may, with little loss of
generality, think of $\phi=mQ$,
$\wh{\phi}(z)=m\wh{Q}_\tau(z)+\eps\log(1+\babs{z}^2)$, and
$\setSp=\setS_\tau$. This will essentially be the case in all our
later applications.

\begin{thm}\label{strb} Let $\setSp$ be a compact subset of $\C$,
$\phi$, $\wh\phi$ and $\Tfun$ three real-valued functions
of class $\calC^{1,1}(\C)$, and $n$ a positive integer. We assume
that:
\begin{enumerate}
\item[(1)] $L^2_{\wh{\phi}}$ contains the constants and there are
non-negative numbers $\alpha$ and $\beta$ such that
\begin{equation}\label{put}\wh{\phi}\le
\phi+\alpha\quad\text{on}\quad \C\qquad\text{and}\qquad\phi\le
\beta+\wh{\phi}\quad \text{on}\quad \setSp,\end{equation}

\item[(2)] $A^2_{\wh{\phi}}\subset A^2_{\phi,n}$,

\item[(3)] The function $\wh{\phi}+\Tfun$ is strictly subharmonic on
$\C$,

\item[(4)] $\Tfun$ is locally constant on $\C\setminus \setSp$,

\item[(5)] There exists a number $\kappa$, $0<\kappa<1$, such that
\begin{equation}\label{kappa}
\frac{|\dbar \Tfun|^2}{\Delta\wh{\phi}+\Delta\Tfun}\le
\frac{\kappa^2}{\e^{\alpha+\beta}}\quad \text{a.e.   on}\quad
\setSp.\end{equation}

\end{enumerate}
Let $f\in \coity(\C)$ be such that
\begin{equation*}\supp f\subset \setSp.\end{equation*} Then
$u_*$, the $L^2_{\phi,n}$-minimal solution to $\dbar u=f$ satisfies
\begin{equation}\label{zapp}\int_\C\babs{u_*}^2\e^{\Tfun-\phi}\dA\le \frac
{\e^{\alpha+\beta}} {(1-\kappa)^2}\int_\C\babs{f}^2\frac
{\e^{\Tfun-\phi}} {\Delta\wh{\phi}+\Delta\Tfun} \dA.\end{equation}
\end{thm}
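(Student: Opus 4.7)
My plan is to adapt Hörmander's classical one-variable $L^2$-estimate \eqref{hoe0} for the $\dbar$-equation so as to accommodate both (i) the truncated solution space $L^2_{\phi,n}$, via the orthogonality characterization \eqref{orthog}, and (ii) the twist by $\Tfun$, via a Bombieri--Skoda-type absorption based on hypothesis (5).

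The first step is to apply the classical Hörmander estimate to the weight $\wh\phi + \Tfun$, which is $\calC^{1,1}$-smooth and strictly subharmonic on $\C$ by (3). This produces a particular solution $v$ of $\dbar v = f$ with
\begin{equation*}
\int_\C |v|^2 e^{-\wh\phi - \Tfun}\,\dA \;\le\; \int_\C |f|^2\,\frac{e^{-\wh\phi - \Tfun}}{\Delta\wh\phi + \Delta\Tfun}\,\dA.
\end{equation*}
Next I would verify $v \in L^2_{\phi,n}$. Since $\supp f \subset \setSp$ is compact, $v$ is holomorphic on $\C\setminus\setSp$, and by (4) the function $\Tfun$ is locally constant there, so $v \in L^2_{\wh\phi}$ on some neighbourhood of infinity. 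Expanding $v$ in a Laurent series at $\infty$, the non-negative-degree part is an entire function of finite $\wh\phi$-norm, which by (2) must be a polynomial of degree at most $n-1$; the principal part decays at infinity. Thus $v$ has polynomial growth of degree at most $n-1$, and then (1) forces $v \in L^2_{\phi,n}$. Consequently $u_* - v \in A^2_{\phi,n}$, and \eqref{orthog} identifies $u_* = v - Pv$, where $P$ is the $L^2_\phi$-orthogonal projection onto $A^2_{\phi,n}$.

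The key step is to convert the Hörmander bound on $v$ in the weight $e^{-\wh\phi - \Tfun}$ into the desired bound on $u_*$ in the weight $e^{\Tfun - \phi}$. I would proceed via a twisted Bochner--Kodaira-type identity: the relation $\dbar u_* = f$, combined with integration by parts in the weight $e^{\Tfun - \phi}$ against a suitable test form, generates a main term proportional to $\int f\,\bar u_*\cdot(\ldots)\,\dA$ plus an error term proportional to $\int_{\setSp} |u_*|^2\,|\dbar\Tfun|^2\,(\Delta\wh\phi+\Delta\Tfun)^{-1}\,e^{\Tfun-\phi}\,\dA$. The main term is controlled by the Hörmander bound for $v$ after Cauchy--Schwarz, with the conversion between the weights $\wh\phi + \Tfun$ and $\phi - \Tfun$ carried out via hypothesis (1) (using $\phi \le \wh\phi + \beta$ on $\setSp$ and $\wh\phi \le \phi + \alpha$ globally), producing the factor $e^{(\alpha+\beta)/2}$. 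The error term is confined to $\setSp$ by (4), and (5) then bounds it by $\kappa^2\int|u_*|^2 e^{\Tfun-\phi}\,\dA$, which absorbs into the left-hand side. Rearranging the resulting quadratic inequality
\begin{equation*}
\|u_*\|_{L^2_{\phi-\Tfun}} \;\le\; e^{(\alpha+\beta)/2}\,\|f/\sqrt{\Delta\wh\phi+\Delta\Tfun}\|_{L^2_{\phi-\Tfun}} + \kappa\,\|u_*\|_{L^2_{\phi-\Tfun}}
\end{equation*}
and squaring yields \eqref{zapp} with the claimed constant $e^{\alpha+\beta}/(1-\kappa)^2$.

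The principal obstacle is this last step: coordinating \eqref{orthog} (an orthogonality with respect to the $L^2_\phi$ inner product) with the Hörmander solvability (naturally in the weight $\wh\phi + \Tfun$), so that the main and error terms in the twisted identity appear in exactly the form that (1), (4), and (5) allow for clean conversion and absorption with the sharp constants $e^{\alpha+\beta}$ and $(1-\kappa)^{-2}$, with no extra slack.
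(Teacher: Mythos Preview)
Your plan has a genuine gap at the step you yourself flag as the principal obstacle. Two concrete issues block the vague ``twisted Bochner--Kodaira'' route. First, applying H\"ormander directly to $\dbar v=f$ in the weight $\wh\phi+\Tfun$ controls $\int_\C|v|^2 e^{-\wh\phi-\Tfun}\dA$, which carries $e^{-\Tfun}$ rather than the $e^{+\Tfun}$ required in the target norm $\int_\C|u_*|^2 e^{\Tfun-\phi}\dA$; hypothesis (1) cannot reverse that sign. Second, the orthogonality \eqref{orthog} is only to the finite-dimensional space $A^2_{\phi,n}$, not to all of $A^2_\phi$, so one does not have $\dbar^*_\phi u_*=0$ and the standard Bochner--Kodaira identity is unavailable for $u_*$. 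Your proposed ``integration by parts against a suitable test form'' does not say how either obstacle is overcome, and the inequality $\|u_*\|\le e^{(\alpha+\beta)/2}\|f/\sqrt{\Delta\wh\phi+\Delta\Tfun}\|+\kappa\|u_*\|$ is asserted rather than derived.

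The paper's mechanism is a competitor construction. Put $w_0=u_*e^\Tfun$; since $\Tfun$ is bounded and locally constant off $\setSp$, $w_0$ is the $L^2_{\phi+\Tfun,n}$-minimal solution of $\dbar w=(f+u_*\dbar\Tfun)e^\Tfun$, and $\int_\C|w_0|^2 e^{-(\phi+\Tfun)}\dA=\int_\C|u_*|^2 e^{\Tfun-\phi}\dA$, which is how the correct sign of $\Tfun$ appears. One then builds $w_1=v_*e^\Tfun+\xi_0\in L^2_{\phi+\Tfun,n}$ satisfying $\dbar w_1=\dbar w_0$, where $v_*$ is the $L^2_{\wh\phi}$-minimal solution of $\dbar v=f$ and $\xi_0$ the $L^2_{\wh\phi+\Tfun}$-minimal solution of $\dbar\xi=(u_*-v_*)\dbar(e^\Tfun)$; the key observation is that $w_1$ is itself $L^2_{\wh\phi+\Tfun}$-minimal for its own $\dbar$-equation. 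Minimality of $w_0$ together with $\wh\phi\le\phi+\alpha$ gives $\int_\C|w_0|^2 e^{-(\phi+\Tfun)}\dA\le e^\alpha\int_\C|w_1|^2 e^{-(\wh\phi+\Tfun)}\dA$, and H\"ormander applied to $w_1$ bounds the latter by $\int_\C|f+u_*\dbar\Tfun|^2\frac{e^{\Tfun-\wh\phi}}{\Delta\wh\phi+\Delta\Tfun}\dA$. Using $\phi\le\wh\phi+\beta$ on $\setSp$, the elementary inequality $|a+b|^2\le(1+t)|a|^2+(1+t^{-1})|b|^2$ with $t=\kappa/(1-\kappa)$, and hypothesis (5), one arrives at exactly the self-referential inequality you wrote down and thence \eqref{zapp}. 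The missing idea is that H\"ormander is applied not to $\dbar v=f$ but to the equation for the competitor $w_1$, whose right-hand side already contains $u_*$.
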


\noindent Before we give the proof, we note a simple lemma, which
will be put to repeated use.

\begin{lem}\label{convo} Suppose that $f\in\calC_0(\C)$. Then
\eqref{dstreck} has a solution $u$ in $L^2_{\wh{\phi}}$. Moreover
$v_*$, the $L^2_{\wh{\phi}}$-minimal solution to \eqref{dstreck} is
of class $L^2_{\phi,n}$.
\end{lem}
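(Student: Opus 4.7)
The plan is to exhibit an explicit $L^2_{\wh\phi}$-solution using the Cauchy transform and then orthogonally project onto the complement of $A^2_{\wh\phi}$.

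First, since $f\in\calC_0(\C)$, say with $\supp f\subset\D(0;R)$, the Cauchy transform $u=\cauchy f$ is continuous and bounded on $\C$: for $|w|\ge 2R$ we have $|u(w)|\le 2R^2\|f\|_\infty/|w|$, while on $|w|\le 2R$ the local logarithmic singularity of $1/(w-z)$ is integrable against $dA$ on the (bounded) support of $f$, giving a uniform bound in $w$. Since $\wh\phi$ is a weight, $\int_\C\e^{-\wh\phi}\dA<\infty$, so bounded functions lie in $L^2_{\wh\phi}$; in particular $u\in L^2_{\wh\phi}$ with $\dbar u=f$, which proves the first assertion.

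For the second assertion, I would represent the $L^2_{\wh\phi}$-minimal solution concretely as $v_*=u-\proj_{\wh\phi}u$, where $\proj_{\wh\phi}:L^2_{\wh\phi}\to A^2_{\wh\phi}$ is the orthogonal projection; the space $A^2_{\wh\phi}$ is closed in $L^2_{\wh\phi}$ by the standard uniform subharmonicity estimate (cf.\ Lemma \ref{berndt}), so this projection is well-defined. Any other $L^2_{\wh\phi}$-solution differs from $u$ by an element of $A^2_{\wh\phi}$, so the set of $L^2_{\wh\phi}$-solutions is the affine subspace $u+A^2_{\wh\phi}$, whose element of smallest norm is precisely $v_*$.

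It remains to verify the two conditions in \eqref{ospaces} and \eqref{os2} defining $L^2_{\phi,n}$. Hypothesis (1) in Theorem \ref{strb} gives $\wh\phi\le\phi+\alpha$ on $\C$, so $\e^{-\phi}\le\e^{\alpha}\e^{-\wh\phi}$, yielding the continuous inclusion $L^2_{\wh\phi}\subset L^2_\phi$; in particular $v_*\in L^2_\phi$. Hypothesis (2) gives $A^2_{\wh\phi}\subset A^2_{\phi,n}$, so $\proj_{\wh\phi}u\in A^2_{\phi,n}$ is a polynomial of degree at most $n-1$; thus $\proj_{\wh\phi}u(w)=\calO(|w|^{n-1})$ at infinity, and combined with $u(w)=\calO(1)$ one concludes $v_*(w)=\calO(|w|^{n-1})$. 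Hence $v_*\in L^2_{\phi,n}$. No serious obstacle arises; the only point requiring a moment's thought is the identification of $v_*$ with $u-\proj_{\wh\phi}u$, which is immediate once one recognises the set of $L^2_{\wh\phi}$-solutions as an affine translate of the closed subspace $A^2_{\wh\phi}$.
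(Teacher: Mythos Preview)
Your argument is correct and essentially identical to the paper's: both take the Cauchy transform $u=\cauchy f$ as an explicit bounded solution in $L^2_{\wh\phi}$, write the minimal solution as $v_*=u+g$ with $g\in A^2_{\wh\phi}$ (equivalently $g=-\proj_{\wh\phi}u$), and then invoke hypothesis~(2) of Theorem~\ref{strb} to conclude $g\in A^2_{\phi,n}$, whence $v_*\in L^2_{\phi,n}$. Your version is more explicit about the details (boundedness of $\cauchy f$, closedness of $A^2_{\wh\phi}$, the inclusion $L^2_{\wh\phi}\subset L^2_\phi$ from hypothesis~(1)); the paper's proof is a terse three lines and in fact cites ``(3)'' where ``(2)'' is clearly intended.
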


\begin{proof}
The assumptions imply that the
Cauchy transform $\cauchy f$ is in $L^2_{\wh\phi}$. Thus the
$L^2_{\wh\phi}$-minimal solution $v_*$ to \eqref{dstreck} exists,
necessarily of the form $v_*=\cauchy f+g$ with some $g\in
A^2_{\wh\phi}$. In view of (3), we know that $g\in A^2_{\phi,n}$.
The assertion is now immediate.
\end{proof}

\begin{proof}[Proof of Theorem \ref{strb}]
Assume that the right hand side in \eqref{zapp} is finite. In view
of \eqref{orthog}, the condition that $u_{*}$ is
$L^2_{\phi,n}$-minimal may be expressed as
\begin{equation*}\int_\C u_{*}\e^\Tfun \bar{h} \e^{-(\phi+\Tfun)}\dA=0
\quad\text{for all}\quad h\in A^2_{\phi,n}.
\end{equation*}
The latter relation means that the function $w_{0}=u_{*}\e^\Tfun$
minimizes the norm in $L^2_{\phi+\Tfun}$ over elements of the
(non-closed) subspace
\begin{equation*}\e^{\Tfun}\cdot L^2_{\phi,n}=\big\{w;\quad w=h\e^\Tfun,\quad\text{where}\quad h\in
L^2_{\phi,n}\big\}\subset L^2_{\phi+\Tfun}
\end{equation*}
which solve the (modified) $\dbar$-equation
\begin{equation}\label{mdbar}\dbar w=\dbar\lpar u_{*}\e^\Tfun\rpar=
f\e^\Tfun+u_{*}\dbar\lpar \e^\Tfun\rpar.
\end{equation}
Now, since $\Tfun$ is bounded on $\C$ and locally constant outside
$\setSp$, we have
\begin{equation*}L^2_{\phi+\Tfun,n}=\e^\Tfun\cdot L^2_{\phi,n}=
L^2_{\phi,n} \qquad (\text{as sets}),\end{equation*} and we conclude
that $w_{0}$ is the norm-minimal solution to \eqref{mdbar} in
$L^2_{\phi+\Tfun,n}$.

Let $v_*$ denote the $L^2_{\wh{\phi}}$-minimal solution to $\dbar
u=f$ (see Lemma \ref{convo}). We form the continuous function
$g=\dbar\lpar(u_{*}-v_*)\e^\Tfun\rpar=(u_{*}-v_*)\dbar\lpar
\e^\Tfun\rpar,$ whose support is contained in $\setSp$, and consider
the $\dbar$-equation
\begin{equation}\label{go}\dbar\xi=g=(u_{*}-v_*)\dbar\lpar \e^\Tfun\rpar.
\end{equation}
The assertion of Lemma \ref{convo} remains valid if $\wh\phi$ is
replaced by $\wh{\phi}+\Tfun$; it follows that \eqref{go} has a
solution $\xi\in L^2_{\wh{\phi}+\Tfun}$, and moreover, if $\xi_{0}$
denotes the norm-minimal solution to \eqref{go} in
$L^2_{\wh{\phi}+\Tfun}$, we have $\xi_{0} \in L^2_{\phi+\Tfun,n}$.

We now continue by forming the function
\begin{equation*}w_1=v_*\e^{\Tfun}+\xi_{0}.\end{equation*}
It is clear that $w_1\in L^2_{\phi+\Tfun,n}$, and a calculation
yields that
\begin{equation}\label{nu2}\dbar w_1=
f \e^{\Tfun}+v_*\dbar\lpar \e^{\Tfun}\rpar +(u_{*}-v_*)\dbar\lpar
\e^{\Tfun}\rpar =\dbar\lpar u_{*}\e^{\Tfun}\rpar=\dbar w_{0}.
\end{equation}
Since $w_{0}$
is norm-minimal in $L^2_{\phi+\Tfun,n}$ over functions $w$ such that
$\dbar w=\dbar w_{0}$, we must have
\begin{equation}
\label{ojoj}\int_\C\babs{w_{0}}^2 e^{-(\phi+\Tfun)}\dA\le
\int_\C\babs{w_1}^2\e^{-(\phi+\Tfun)}\dA\le \e^{\alpha}\int_\C
\babs{w_1}^2 \e^{-(\wh{\phi}+\Tfun)}\dA,
\end{equation}
where we have used the condition $\wh{\phi}\le\phi+\alpha$ to deduce
the second inequality. Moreover, since $\xi_0$
is norm-minimal in $L^2_{\wh{\phi}+\Tfun}$ amongst
solutions to
\eqref{go}, we have
\begin{equation*}
\int_\C w_1\bar{h}\e^{-(\wh{\phi}+\Tfun)}\dA=\int_\C
v_*\bar{h}\e^{-\wh{\phi}}\dA+ \int_\C \xi_0
\bar{h}\e^{-(\wh{\phi}+\Tfun)}\dA=0\quad
\end{equation*}
for all $h\in A^2_{\wh{\phi}+\Tfun}$, so the function $w_1$ is in
fact
the norm-minimal solution to a $\dbar$-equation in
$L^2_{\wh{\phi}+\Tfun}$. The $\dbar$-equation satisfied by $w_1$ is
(see \eqref{nu2})
\begin{equation*}\dbar w_1=f\e^\Tfun+u_{*}\dbar\lpar \e^\Tfun\rpar =
f \e^\Tfun +u_{*}\e^\Tfun\dbar \Tfun.
\end{equation*}
By the estimate \eqref{hoe0} applied to the weight
$\wh\phi+\Tfun$, we obtain that
\begin{equation}\label{inter}\int_\C\babs{w_1}^2
\e^{-(\wh{\phi}+\Tfun)}\dA\le \int_{\C} \babs{f\e^\Tfun
+u_{*}\e^\Tfun\dbar \Tfun}^2 \frac {\e^{-(\wh{\phi}+\Tfun)}}
{\Delta\lpar \wh{\phi}+\Tfun\rpar}\dA=\int_{\C} \babs{f
+u_{*}\dbar\Tfun}^2 \frac {\e^{\Tfun-\wh{\phi}}}
{\Delta\wh{\phi}+\Delta\Tfun}\dA.
\end{equation}
Since $f$ and $\dbar\Tfun$ are supported in $\setSp$, and since
$\e^{-\wh{\phi}}\le \e^{\beta}\e^{-\phi}$ there (see \eqref{put}),
it is seen that the right hand side in \eqref{inter} can be
estimated by
\begin{equation}\label{inter2}\e^{\beta}
\int_\C\babs{f+u_*\dbar\Tfun}^2\frac {\e^{\Tfun-\phi}}
{\Delta\wh{\phi}+\Delta\Tfun} \dA.\end{equation} For $t>0$ we now
use the inequality $\babs{a+b}^2\le (1+t)\babs{a}^2+
(1+t^{-1})\babs{b}^2$
and the condition \eqref{kappa} to conclude that the
integral in \eqref{inter2} is dominated by
\begin{equation}\label{inter3}\begin{split}(1+t)&\int_\C
\babs{f}^2\frac {\e^{\Tfun-\phi}} {\Delta\wh{\phi}+\Delta\Tfun}
\dA+(1+t^{-1})\int_\C\babs{u_*}^2\frac {|\dbar\Tfun|^2}
{\Delta\wh{\phi}+\Delta\Tfun}
\e^{\Tfun-\phi}\dA\le\\
&\le (1+t)\int_\C\babs{f}^2\frac {\e^{\Tfun-\phi}}
{\Delta\wh{\phi}+\Delta\Tfun}\dA+(1+t^{-1})\frac {\kappa^2}
{\e^{\alpha+\beta}}
\int_\C\babs{u_*}^2\e^{\Tfun-\phi}\dA.\\
\end{split}
\end{equation}
Tracing back through \eqref{ojoj}, \eqref{inter}, \eqref{inter2} and
\eqref{inter3}, we get
\begin{equation*}\int_\C\babs{u_*}^2\e^{\Tfun-\phi}\dA\le
\e^{\alpha+\beta}(1+t)\int_\C\babs{f}^2\frac {\e^{\Tfun-\phi}}
{\Delta\wh{\phi}+\Delta\Tfun}\dA+
(1+t^{-1})\kappa^2\int_\C\babs{u_*}^2\e^{\Tfun-\phi}\dA,\end{equation*}
which we write as
\begin{equation*}
\big(1-(1+t^{-1})\kappa^2\big)\int_\C|u_{*}|^2\e^{\Tfun-\phi}\dA
 \le\e^{\alpha+\beta}(1+t)\int_{\C}\babs{f}^2\frac{\e^{\Tfun-\phi}}
{\Delta\wh{\phi}+\Delta\Tfun}\dA.
\end{equation*}
The desired estimate \eqref{zapp} now follows if we pick
$t=\kappa/(1-\kappa).$
\end{proof}

\subsection{Implementation scheme.} We now fix
$Q\in \calC^{1,1}(\C)$ satisfying the growth assumption \eqref{gro}
with a fixed $\parT>0$. Adding a constant to $Q$ does not change the
problem and so we may assume that $Q\ge 1$ on $\C$. Let us put
\begin{equation}\label{sigmat}q_\tau=\sup_{z\in\setS_\tau}\{Q(z)\}.\end{equation}
We next fix a positive number $\tau<\parT$ and two positive numbers $\apar$
and $\bpar$ such that
\begin{equation}\label{sss}
\bpar\log(1+\babs{z}^2)\le\apar \wh{Q}_\tau(z),\qquad
z\in\C.\end{equation}
This is possible since $\wh{Q}_\tau\ge 1$ and
$\wh{Q}_\tau(z)=\tau\log\babs{z}^2+\calO(1)$ when $z\to\infty$ (see
\eqref{qtau}). In particular, it yields that $\bpar\le \apar\tau$.
We now define
\begin{equation}\label{pha}\phi_m=mQ\quad
\text{and}\quad
\wh{\phi}_{m,\apar,\bpar}(z)=(m-\apar)\widehat{Q}_\tau(z)+\bpar\log(1+\babs{z}^2).\end{equation}
Note that $\wh\phi_{m,\apar,\bpar}$ is strictly subharmonic on $\C$
whenever $m\ge \apar$ with
\begin{equation}\label{bye}\Delta \wh{\phi}_{m,\apar,\bpar}(z)=(m-\apar)\Delta \wh{Q}_\tau(z)+
\bpar\lpar 1+\babs{z}^2\rpar^{-2} \ge \bpar\lpar
1+\babs{z}^2\rpar^{-2},\end{equation} and that \eqref{sss} implies
(since $\wh{Q}_\tau\le Q$)
\begin{equation*}\wh{\phi}_{m,\apar,\bpar}\le \phi_m\qquad
\text{on}\quad \C.\end{equation*} Furthermore, \eqref{qtau} implies
that
\begin{equation}\label{qq}\wh{\phi}_{m,\apar,\bpar}=\lpar(m-\apar)\tau+\bpar\rpar
\log\babs{z}^2+\calO(1), \qquad \text{as}\quad
z\to\infty.\end{equation} It yields that
\begin{equation*}\phi_m(z)-\wh{\phi}_{m,\apar,\bpar}(z)\ge
m(\parT-\tau)\log\babs{z}^2+\calO(1),\qquad\text{as}\quad
z\to\infty.\end{equation*} Note also that
\begin{equation*}A^2_{\phi_m,n}=A^2_{mQ}\cap\calP_n=\Hspace_{m,n}.
\end{equation*}
We now check the conditions (1) and (2) of Th. \ref{strb}. (Recall
that $]x[$ denotes the largest integer which is strictly smaller
than $x$.)

\begin{lem} \label{putty}
Condition {\rm (1)} holds for $\phi=\phi_m$,
$\wh\phi=\wh\phi_{m,\apar,\bpar}$, $\setSp=\setS_\tau$, $\alpha=0$
and
$\beta=\apar q_\tau$, provided that $(m-\apar)\tau+\bpar>1$.
Condition {\rm (2)}, i.e. $A^2_{\wh{\phi}_{m,\apar,\bpar}}\subset
\Hspace_{m,n}$, holds if $n\ge ](m-\apar)\tau+\bpar[$. Thus
conditions {\rm (1)} and {\rm (2)} hold whenever
\begin{equation}\label{alag}n\ge](m-\apar)\tau+\bpar[>0.\end{equation}
\end{lem}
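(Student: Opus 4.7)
The plan is to verify the two conditions of Theorem~\ref{strb} with the claimed constants $\alpha=0$ and $\beta=\apar q_\tau$. All the necessary ingredients are already in hand: the definitional inequality $\wh Q_\tau\le Q$ (immediate from the defining supremum for $\wh Q_\tau$), the coincidence $Q=\wh Q_\tau$ on $\setS_\tau$ from \eqref{coincidence}, the asymptotic \eqref{qq}, the minorant bound \eqref{sss}, and the definition \eqref{sigmat}. The verification is then a sequence of elementary manipulations.

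For Condition~(1) I would argue in three short steps. First, rewriting \eqref{sss} as $\bpar\log(1+|z|^2)\le\apar\wh Q_\tau\le\apar Q$ and adding the trivial inequality $(m-\apar)\wh Q_\tau\le(m-\apar)Q$ yields
\begin{equation*}
\wh\phi_{m,\apar,\bpar}=(m-\apar)\wh Q_\tau+\bpar\log(1+|z|^2)\le(m-\apar)Q+\apar Q=mQ=\phi_m,
\end{equation*}
so one may take $\alpha=0$. Second, on $\setS_\tau$ the identity $Q=\wh Q_\tau$ gives
\begin{equation*}
\phi_m-\wh\phi_{m,\apar,\bpar}=\apar\wh Q_\tau-\bpar\log(1+|z|^2)\le\apar Q\le\apar q_\tau,
\end{equation*}
using $\log(1+|z|^2)\ge 0$ together with \eqref{sigmat}; this identifies $\beta=\apar q_\tau$. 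Third, the constants lie in $L^2_{\wh\phi_{m,\apar,\bpar}}$ iff $e^{-\wh\phi_{m,\apar,\bpar}}$ is integrable over $\C$, and by \eqref{qq} one has $e^{-\wh\phi_{m,\apar,\bpar}(z)}\asymp|z|^{-2((m-\apar)\tau+\bpar)}$ at infinity, which is integrable precisely when $(m-\apar)\tau+\bpar>1$.

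For Condition~(2), the inclusion $A^2_{\wh\phi_{m,\apar,\bpar}}\subset A^2_{\phi_m}$ is automatic from the already-established $\wh\phi_{m,\apar,\bpar}\le\phi_m$, so the task reduces to showing the polynomial bound $A^2_{\wh\phi_{m,\apar,\bpar}}\subset\calP_n$ when $n\ge\,](m-\apar)\tau+\bpar[$. Applying the sub-mean value property of $|f|^2$ on the disk $\D(z;|z|/2)$ and invoking the two-sided asymptotic from \eqref{qq}, which gives $e^{-\wh\phi_{m,\apar,\bpar}(w)}\gtrsim|z|^{-2((m-\apar)\tau+\bpar)}$ uniformly for $w\in\D(z;|z|/2)$ and $|z|$ large, I would derive $|f(z)|=\calO(|z|^{(m-\apar)\tau+\bpar-1})$ for every $f\in A^2_{\wh\phi_{m,\apar,\bpar}}$. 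A Liouville-type argument then bounds $\deg f$ by $\,](m-\apar)\tau+\bpar[\,-1$, so $f\in\calP_n$ as soon as $n\ge\,](m-\apar)\tau+\bpar[$. In the borderline integer case $(m-\apar)\tau+\bpar\in\Z$, the would-be top-degree monomial is ruled out by observing that it contributes a logarithmically divergent tail to $\int|f|^2 e^{-\wh\phi_{m,\apar,\bpar}}\dA$.

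The argument carries no significant obstacle; the one slightly delicate moment is precisely this integer-case degree count, which is handled by the logarithmic-divergence observation just described.
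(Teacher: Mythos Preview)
Your argument is essentially the paper's, carried out in more detail; the paper simply cites the pre-established inequality $\wh\phi_{m,\apar,\bpar}\le\phi_m$, the coincidence $Q=\wh Q_\tau$ on $\setS_\tau$, the asymptotic \eqref{qq}, and the integrability criterion $\int_\C(1+|z|^2)^{-r}\dA<\infty$ iff $r>1$, leaving the Liouville step implicit. One small slip: your ``trivial inequality $(m-\apar)\wh Q_\tau\le(m-\apar)Q$'' needs $m\ge\apar$, which is not part of the hypothesis of the lemma; the paper avoids this by bounding $\bpar\log(1+|z|^2)\le\apar\wh Q_\tau$ (not $\apar Q$) first, so that the two terms combine to $m\wh Q_\tau\le mQ$ regardless of the sign of $m-\apar$.
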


\begin{proof}
We have already shown that $\wh{\phi}_{m,\apar,\bpar}\le \phi_m$ on
$\C$. Moreover $\phi_m\le\beta+\wh{\phi}_{m,\apar,\bpar}$ on
$\setS_\tau$, since $Q=\wh{Q}_\tau$ there. The assertion that
$\wh{\phi}_{m,\apar,\bpar}\le \phi_m$ implies that
$A^2_{\wh{\phi}_{m,\apar,\bpar}}\subset A^2_{\phi_m}$, and it
remains only to prove that $A^2_{\wh{\phi}_{m,\apar,\bpar}}\subset
\calP_n$. But this follows from \eqref{qq}, \eqref{alag}, and the
fact that
$\int_{\C}(1+|z|^{2})^{-r}\dA(z)<\infty$ if and only if $r>1$.
\end{proof}

We now apply Th. \ref{strb}.
It will be convenient to define
\begin{equation}\label{ctdef}c_\tau=\inf_{z\in\setS_\tau}
\big\{\lpar 1+\babs{z}^2\rpar^{-2}\big\}.\end{equation}

\begin{thm} \label{boh} Let $Q\in\calC^{1,1}(\C)$ and $Q\ge 1$ on $\C$. Fix two positive
numbers $\apar$ and $\bpar$ such that relation \eqref{sss} is
satisfied, and define the positive numbers $q_\tau$ and $c_\tau$ by
\eqref{sigmat} and \eqref{ctdef}, and let
$\wh{\phi}_{m,\apar,\bpar}$ be defined by \eqref{pha}.
Suppose there are real-valued functions $\Tfun_m\in\calC^{1,1}(\C)$
which are locally constant on $\C\setminus\setS_\tau$ such that
$\wh{\phi}_{m,\apar,\bpar}+\Tfun_m$ is strictly subharmonic on $\C$
and that for some
positive number $\kappa<1$ we have
\begin{equation*}
\frac {|\dbar \Tfun_m|^2}
{\Delta\wh{\phi}_{m,\apar,\bpar}+\Delta\Tfun_m}\le\frac{\kappa^2}
{\e^{\apar q_\tau}} \qquad \text{a.e.   on}\quad \C.
\end{equation*}
Suppose furthermore that $f\in\coity(\C)$ satisfies
\begin{equation*}\supp f\subset
\setS_\tau.\end{equation*} Then, if $n\ge ](m-\apar)\tau+\bpar[>0$,
we have that $u_*$, the $L^2_{mQ,n}$-minimal solution to $\dbar
u=f$, satisfies
\begin{equation*}\int_\C\babs{u_*}^2
\e^{\Tfun_m-mQ}\dA\le \frac{\e^{\apar
q_\tau}}{(1-\kappa)^2}\int_{\C}\babs{f}^2 \frac {\e^{\Tfun_m-mQ}}{
(m-\apar)\Delta Q+\Delta\Tfun_m+\bpar c_\tau} \dA.\end{equation*}
\end{thm}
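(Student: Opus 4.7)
The plan is to invoke Theorem \ref{strb} directly with the data $\phi=mQ$, $\wh\phi=\wh\phi_{m,\apar,\bpar}$, $\Tfun=\Tfun_m$, $\setSp=\setS_\tau$, $\alpha=0$, and $\beta=\apar q_\tau$, and then to tighten the denominator produced by that theorem on the support of $f$.

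First I would verify the five hypotheses of Theorem \ref{strb}. Conditions (1) and (2) are exactly the content of Lemma \ref{putty}, which applies because the standing assumption $n\geq\,](m-\apar)\tau+\bpar[\,>0$ is the precondition \eqref{alag}. Conditions (3), (4), and (5) are word-for-word translations of the three hypotheses placed on $\Tfun_m$ in the statement to be proved: strict subharmonicity of $\wh\phi_{m,\apar,\bpar}+\Tfun_m$ on $\C$, local constancy of $\Tfun_m$ on $\C\setminus\setS_\tau$, and the bound on $|\dbar\Tfun_m|^2/(\Delta\wh\phi_{m,\apar,\bpar}+\Delta\Tfun_m)$ by $\kappa^2/\e^{\apar q_\tau}$ (which is even assumed a.e.\ on $\C$, hence certainly on $\setSp=\setS_\tau$). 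With all five conditions verified, Theorem \ref{strb} delivers
\begin{equation*}
\int_\C |u_*|^2\,\e^{\Tfun_m-mQ}\,\dA \;\le\; \frac{\e^{\apar q_\tau}}{(1-\kappa)^2}\int_\C|f|^2\,\frac{\e^{\Tfun_m-mQ}}{\Delta\wh\phi_{m,\apar,\bpar}+\Delta\Tfun_m}\,\dA.
\end{equation*}

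Next I would replace the denominator on the right by the smaller quantity $(m-\apar)\Delta Q+\Delta\Tfun_m+\bpar c_\tau$. By \eqref{bye}, $\Delta\wh\phi_{m,\apar,\bpar}=(m-\apar)\Delta\wh Q_\tau+\bpar(1+|z|^2)^{-2}$, and $(1+|z|^2)^{-2}\ge c_\tau$ on $\setS_\tau$ by \eqref{ctdef}, so the matter reduces to the a.e.\ identity $\Delta\wh Q_\tau=\Delta Q$ on $\setS_\tau$. This I would establish by noting that $v:=Q-\wh Q_\tau$ is non-negative, of class $\calC^{1,1}(\C)$, and vanishes on $\setS_\tau$: at a Lebesgue density point $z_0$ of $\setS_\tau$ at which $D^2 v$ exists (which covers a.e.\ point of $\setS_\tau$, since $\calC^{1,1}$-functions are twice differentiable a.e.), writing the Taylor expansion of $v$ at $z_0$ and exploiting $v\ge 0$ in every direction together with $v\equiv 0$ along sequences through $\setS_\tau$ approaching $z_0$ in arbitrary directions forces $D^2 v(z_0)=0$, and in particular $\Delta v(z_0)=0$. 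Since $\supp f\subset\setS_\tau$, substituting the resulting pointwise lower bound for $\Delta\wh\phi_{m,\apar,\bpar}+\Delta\Tfun_m$ on $\setS_\tau$ into the preceding integral inequality gives the claimed estimate.

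The argument is thus essentially a mechanical transcription of Theorem \ref{strb} into the present notation, so the only ingredient worth recording separately is the a.e.\ identity $\Delta\wh Q_\tau=\Delta Q$ on the coincidence set -- a standard but noteworthy consequence of $\calC^{1,1}$-regularity in the obstacle problem.
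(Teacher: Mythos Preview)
Your proposal is correct and follows essentially the same route as the paper: apply Theorem~\ref{strb} with $\phi=mQ$, $\wh\phi=\wh\phi_{m,\apar,\bpar}$, $\Tfun=\Tfun_m$, $\alpha=0$, $\beta=\apar q_\tau$, then lower the denominator on $\supp f\subset\setS_\tau$ via \eqref{bye}, \eqref{ctdef}, and the a.e.\ identity $\Delta Q=\Delta\wh Q_\tau$ on the coincidence set. The only difference is cosmetic: you spell out a density-point argument for that last identity, whereas the paper simply asserts it.
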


\begin{proof} All
conditions in Th. \ref{strb} are fulfilled
with $\phi=mQ$, $\wh{\phi}=\wh{\phi}_{m,\apar,\bpar}$,
$\Tfun=\Tfun_m$, $\alpha=0$ and $\beta=\apar q_\tau$. In view of
\eqref{bye}, it yields that
\begin{equation*}\label{whop3}\begin{split}\int_\C\babs{u_*}^2\e^{\Tfun_m-mQ}\dA&\le
\frac {\e^{\apar q_\tau}} {(1-\kappa)^2}\int_{\C}\babs{f}^2\frac
{\e^{\Tfun_m-mQ}} {\Delta\wh{\phi}_{m,\apar,\bpar}+
\Delta\Tfun_m}\dA\le\\
&\le \frac {\e^{\apar q_\tau}}
{(1-\kappa)^2}\int_{\C}\babs{f}^2\frac {\e^{\Tfun_m-mQ}}
{(m-\apar)\Delta\wh{Q}_\tau+\Delta\Tfun_m+\bpar c_\tau}\dA.\\
\end{split}\end{equation*} The assertion is now immediate from \eqref{whop3},
since $\Delta Q=\Delta \wh{Q}_\tau$ a.e. on $\supp f$.
\end{proof}

\noindent In Sect. \ref{point}, we shall use the full force of Th.
\ref{boh}. As for now, we just mention the following simple
consequence, which also can be proved easily by more elementary
means, cf. \cite{B}, p. 10.

\begin{cor}\label{bh} Let $Q\in \calC^{1,1}(\C)$ and $Q\ge 1$ on
$\C$. Further, let $\setSp$ be a compact subset of $\setS_\tau$. Put
\begin{equation*}a=\essinf\{\Delta Q(z);\,
z\in\setSp\}.\end{equation*} Let $m_0=\max\{2\apar,(1+\apar)/\tau\}$
and assume that $f\in \coity(\C)$ satisfies \begin{equation*}\supp
f\subset\setSp,\end{equation*} and that $n\ge ](m-\apar)\tau+\bpar[$
and $m\ge m_0$. Then $u_*$, the $L^2_{mQ,n}$-minimal solution to
$\dbar u=f$ satisfies
\begin{equation*}\|u_*\|_{mQ}^2\le
 \frac {2\e^{\apar q_\tau}}
{am+\bpar c_\tau}\|f\|_{mQ}^2.
\end{equation*}
\end{cor}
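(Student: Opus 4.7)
The plan is to apply Theorem \ref{boh} with the trivial auxiliary weight $\Tfun_m\equiv 0$. In that case $\dbar\Tfun_m\equiv 0$, so the condition $|\dbar\Tfun_m|^2/(\Delta\wh\phi_{m,\apar,\bpar}+\Delta\Tfun_m)\le \kappa^2/\e^{\apar q_\tau}$ degenerates to $0\le 0$, and $\kappa$ may be taken arbitrarily small. Accordingly the factor $(1-\kappa)^{-2}$ appearing in the conclusion of Theorem \ref{boh} collapses to $1$; this is seen most cleanly by inspecting the proof of Theorem \ref{strb}, where, with $\Tfun\equiv 0$, the term $(1+t^{-1})\kappa^2\int|u_*|^2\,\cdots$ vanishes identically, so one may send $t\to 0^+$ to get the constant $1$ instead of $(1-\kappa)^{-2}$. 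The hypothesis $n\ge \,](m-\apar)\tau+\bpar[\,>0$ required by Theorem \ref{boh} is provided by the stated lower bound on $n$ together with the condition $m\ge(1+\apar)/\tau$, which forces $(m-\apar)\tau+\bpar>1$.

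Theorem \ref{boh} (with $\Tfun_m\equiv 0$) then delivers
\begin{equation*}
\|u_*\|_{mQ}^2 \le \e^{\apar q_\tau}\int_\C |f(z)|^2\,\frac{\e^{-mQ(z)}}{(m-\apar)\Delta Q(z)+\bpar c_\tau}\,\dA(z).
\end{equation*}
The remaining step is an elementary simplification on $\supp f\subset\setSp$. By definition of $a$, the pointwise bound $\Delta Q\ge a$ holds on $\setSp$, so the integrand's denominator is at least $(m-\apar)a+\bpar c_\tau$ on the support of $f$. The other half of the definition of $m_0$, namely $m\ge 2\apar$, gives $m-\apar\ge m/2$, whence
\begin{equation*}
(m-\apar)a+\bpar c_\tau \ge \frac{am}{2}+\bpar c_\tau \ge \frac{am+\bpar c_\tau}{2}.
\end{equation*}
Plugging this in produces the factor of $2$ in the numerator and yields the claimed bound $\|u_*\|_{mQ}^2 \le (2\e^{\apar q_\tau}/(am+\bpar c_\tau))\|f\|_{mQ}^2$.

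In short, the corollary is a clean repackaging of Theorem \ref{boh} with the weakest possible choice of auxiliary weight, and the two constraints assembled into $m_0$ (namely $m\ge 2\apar$ and $m\ge(1+\apar)/\tau$) are exactly what is needed to make, respectively, the denominator simplification and the admissibility condition on $n$ go through. The only point that might warrant a moment's thought is the $\kappa\to 0^+$ degeneration just described, but this is cosmetic and I do not anticipate any real obstacle there.
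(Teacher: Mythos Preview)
Your proposal is correct and follows exactly the paper's approach: set $\Tfun_m\equiv 0$ in Theorem~\ref{boh}, then use $m-\apar\ge m/2$ (from $m\ge 2\apar$) and the positivity condition on $](m-\apar)\tau+\bpar[$ (from $m\ge (1+\apar)/\tau$) to simplify. Your explicit handling of the degeneration $\kappa\to 0^+$ is more careful than the paper, which simply writes ``Take $\Tfun_m=0$'' without comment; this is a cosmetic point, as you note, since the left-hand side of the estimate is independent of $\kappa$ and one may take the infimum over $\kappa\in(0,1)$.
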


\begin{proof}
Take $\Tfun_m=0$ in Th. \ref{boh} and observe that $m-\apar\ge m/2$
and also $](m-\apar)\tau+\bpar[>0$ whenever $m\ge m_0$. (Any other
$m_0$ having these properties would work as well, of course.)
\end{proof}

\section{Approximate Bergman projections}

\subsection{Preliminaries}  In this section we state and prove a result (Th. \ref{mlem} below), which we
will use to prove the asymptotic expansion in Th. \ref{th3} in the
next section.
The result in question is a modified version of [Berman et al.
\cite{BBS}, Prop. 2.5, p. 9]. Our proof is elementary but rather
lengthy, and the reader may find it worthwhile to look at the result
and go to the next section at the first read.

\subsection{A convention}
It will be convenient to be able to define integrals $\int_S
\chi(w)Y(w)\diff\mu(w)$ where $S$ is a $\mu$-measurable subset of
$\C$, $\chi$ a cut-off function, and $Y$ a complex-valued
$\mu$-measurable function which is well-defined on $\supp\chi$, but
not necessarily on the entire domain $S$. By convention, we extend
the integrand $Y(w)\chi(w)$ to $\C$ by defining $\chi(w)Y(w)=0$
whenever $\chi(w)=0$, i.e., \textit{we define}
\begin{equation*}\int_S
\chi(w)Y(w)\diff\mu(w):=\int_{S\cap
\supp\chi}\chi(w)Y(w)\diff\mu(w).\end{equation*}

\subsection{Statement of the result} Recall the definition of the set $\setX=\{\Delta Q>0\}$,
as well as of the holomorphic extension $\Qext$ of $Q$ from the
anti-diagonal, see \eqref{psi2}.
For the approximating kernel $K_m^1$, see Definition \ref{def1}. By
$\d S$ we mean the positively oriented boundary of a compact set
$S$. We have the following theorem.

\begin{thm} \label{mlem} Let $Q$ be real-analytic in $\C$. Let $S$ be a compact
subset of $\setX$ and fix a point $z_0\in S$. Then there exists

\begin{enumerate}
\item[(1)] a number $\eps>0$ small enough that $K_m^1(z,w)$ makes
sense and is Hermitian for all $z,w\in\D(z_0;2\eps)$ and all $z_0\in
S$,

\item[(2)] a function $\chi\in\coity(\C)$ with $\chi=1$ in
$\D(z_0;3\eps/2)$, $\chi=0$ outside $\D(z_0;2\eps)$, and
$\|\dbar\chi\|_{L^2}\le 3,$

\item[(3)] a real-analytic function $\nu_0(z,w)$ defined for
$z,w\in\D(z_0;2\eps)$,
\end{enumerate}
such that with
\begin{equation}\label{izm}\Iop_{m}u(z)=\int_{S}
u(w)\chi(w)K_m^1(z,w)\e^{-mQ(w)}\dA(w),\qquad u\in A^2_{mQ},\quad
z\in\D(z_0;\eps)\cap S^\circ,
\end{equation} we have
\begin{equation*}\begin{split}
&\babs{\Iop_{m}u(z)-u(z)-\frac 1 {2\pi\imag}\int_{\d
S}u(w)\chi(w)\e^{m(\Qext(z,\bar{w})-Q(w))}\lpar \frac 1
{z-w}+\nu_0(z,w)\rpar\diff
w} \le\frac{C}{m^{3/2}}\e^{mQ(z)/2}\|u\|_{mQ},\\
\end{split}\end{equation*} for all $u$ and $z$ as above, with a number $C$ which only depends on $\eps$ and
$z_0$. Moreover, the numbers $C$ and $\eps$ may be chosen
independently of $z_0$ for $z_0\in S.$

In particular, if $z_0\in S^\circ$ and if $\eps$ is small enough
that $\D(z_0;2\eps)\subset S^\circ$, then $\chi=0$ on $\d S$ and so
\begin{equation}\label{ching}\babs{\Iop_{m}u(z)-u(z)}\le
Cm^{-3/2}\e^{mQ(z)/2}\|u\|_{mQ}, \qquad u\in A^2_{mQ},\quad
z\in\D(z_0;\eps).\end{equation}
\end{thm}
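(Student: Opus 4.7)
The strategy is a Stokes-theorem / integration-by-parts argument that exploits the holomorphy of $u\in A^2_{mQ}$. I will construct a function $\Theta(z,w)$, real-analytic in $w$ on $\D(z_0;2\eps)\setminus\{z\}$ with an integrable $1/(w-z)$-singularity at $w=z$, satisfying a distributional identity of the form
\[\chi(w)K_m^1(z,w)\e^{-mQ(w)} \;=\; \dbar_w\Theta(z,w) \;+\; \chi(z)\delta_z(w) \;+\; \chi(w)\bigl[\bfun_1(z,\bar w)-P(z,w)\bigr]\e^{m(\Qext(z,\bar w)-Q(w))} \;+\; R_\chi(z,w),\]
where $P$ is a smooth correction and $R_\chi$ is supported on $\supp\dbar\chi$. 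Integrating against $u\,\dA$ over $S$ and using $\dbar_w(u\Theta)=u\dbar_w\Theta$, the distributional Stokes identity gives $\int_S u\,\dbar_w\Theta\,\dA=\tfrac{1}{2\pi\imag}\int_{\d S}u\Theta\,\dw$, while $\chi(z)\delta_z$ contributes exactly $u(z)$ since $\chi\equiv1$ on $\D(z_0;3\eps/2)\supset\D(z_0;\eps)$. The task then splits into identifying the boundary integrand with the form stated in the theorem and bounding the two remainder integrals by $Cm^{-3/2}\e^{mQ(z)/2}\|u\|_{mQ}$.

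\textbf{Construction of $\Theta$.} Choose $\eps>0$ small enough that $\Qext$, $\bfun_0$, $\bfun_1$ are bounded and holomorphic on the relevant polydisk, that $\Delta Q\ge\delta_0>0$ on $\D(z_0;2\eps)$ uniformly in $z_0\in S$, and that \eqref{uniq} holds; take $\chi$ as in Subsection~\ref{cutoff}. The key algebraic observation is
\[g(z,w):=\d_2\Qext(z,\bar w)-\dbar Q(w)\;=\;(z-w)\,h(z,w),\qquad h(z,w):=\int_0^1\bfun_0\bigl(w+t(z-w),\bar w\bigr)\,\dt,\]
since $\dbar Q(w)=\d_2\Qext(w,\bar w)$ and $\d_1\d_2\Qext=\bfun_0$. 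The function $h$ is holomorphic in $z$ with $h(w,w)=\bfun_0(w,\bar w)=\Delta Q(w)>0$, so shrinking $\eps$ we may set
\[V(z,w):=-\frac{\bfun_0(z,\bar w)}{h(z,w)},\qquad \Theta(z,w):=\frac{\chi(w)V(z,w)}{w-z}\,\e^{m(\Qext(z,\bar w)-Q(w))}.\]
Direct differentiation, using $\dbar_w[1/(w-z)]=\delta_z$ in the normalization $\dA=\diff x\,\diff y/\pi$ and $\dbar_w\e^{m(\Qext-Q)}=m(z-w)h\,\e^{m(\Qext-Q)}$, produces the identity above with $V(z,z)=-1$, with $P(z,w):=\dbar_wV(z,w)/(w-z)$, and with $R_\chi=-V(\dbar\chi)/(w-z)\cdot\e^{m(\Qext-Q)}$. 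The boundary integrand $V(z,w)/(w-z)$ can be rewritten as $1/(z-w)+\nu_0(z,w)$ with $\nu_0(z,w):=(V(z,w)+1)/(w-z)$. To verify real-analyticity of $\nu_0$, note that in the Wirtinger expansion of $V(z,z+\zeta)=-\bfun_0(z,\bar z+\bar\zeta)/h(z,z+\zeta)$, setting $\zeta=0$ with $\bar\zeta$ free collapses the ratio to $-\bfun_0/\bfun_0=-1$ identically, so all pure-$\bar\zeta$ coefficients of $V+1$ vanish, whence $V+1$ is divisible by $(w-z)$ in the real-analytic category and $\nu_0$ is real-analytic.

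\textbf{Error estimation and the main obstacle.} The remainder $\int_Su R_\chi\,\dA$ is harmless: on $\supp\dbar\chi$ one has $|w-z|\ge\eps/2$, so \eqref{uniq} gives $|\e^{m(\Qext(z,\bar w)-Q(w))}|\le\e^{mQ(z)/2-mQ(w)/2-m\delta_0\eps^2/8}$, and Cauchy--Schwarz supplies an exponentially small bound of the required form. The bulk remainder is the delicate term: Cauchy--Schwarz and \eqref{uniq} yield
\[\babs{\int_S u\chi[\bfun_1-P]\e^{m(\Qext-Q)}\dA}\;\le\;\e^{mQ(z)/2}\|u\|_{mQ}\left(\int_{\D(z_0;2\eps)}|\bfun_1(z,\bar w)-P(z,w)|^2\e^{-m\delta_0|w-z|^2}\dA(w)\right)^{1/2},\]
so it suffices to prove the pointwise matching
\[\babs{\bfun_1(z,\bar w)-P(z,w)}\;\le\; C|w-z|^2,\qquad z,w\in\D(z_0;2\eps),\quad z_0\in S,\]
since then $\int|w-z|^4\e^{-m\delta_0|w-z|^2}\dA=O(m^{-3})$ produces the missing $m^{-3/2}$. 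Establishing this quadratic matching is the main technical obstacle. The value at $w=z$, namely $P(z,z)=\bfun_1(z,\bar z)$, is immediate from the computation $\d_w\dbar_wV(z,z)=\tfrac{1}{2}\d_1\d_2\log\bfun_0(z,\bar z)$, which recovers precisely the definition of $\bfun_1$ given in Definition~\ref{def1}. Matching the first-order derivatives at $w=z$ (i.e.\ $\d_wP(z,z)=0$ and $\dbar_wP(z,z)=\d_2\bfun_1(z,\bar z)$) is a direct but lengthier calculation from the explicit formulas for $V$ and $h$, relying on the anti-holomorphy of $\bfun_0(z,\bar w)$ in $w$ and the functional relation \eqref{funrel}; it is precisely this algebraic coincidence that forces $\bfun_1$ to take the stated form. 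All bounds are uniform in $z_0\in S$ because $\delta_0$, the sup-norms of $V$, $h$, $\bfun_0$, $\bfun_1$, and the implicit constant in \eqref{uniq} can be chosen uniformly on a neighborhood of $S$ inside $\setX$.
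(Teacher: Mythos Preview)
Your construction of $\Theta$ and the resulting decomposition are correct and close in spirit to the paper's argument (your $V$ is minus the paper's $\Theta_0(z,w,\phi_z(w))$, and your $\nu_0$ agrees with the paper's). The genuine gap is the claim
\[
\babs{\bfun_1(z,\bar w)-P(z,w)}\le C\,|w-z|^2 .
\]
This is false in general: only first-order vanishing holds. For a concrete counterexample take $Q(z)=|z|^2(1+2\re z)$ and $z_0=0$. Then $\bfun_0(z,\zeta)=1+2z+2\zeta$, $h(0,w)=1+w+2\bar w$, $V(0,w)=-(1+2\bar w)/(1+w+2\bar w)$, and a short computation gives
\[
P(0,w)=\frac{\dbar_w V(0,w)}{w}= \frac{-2}{(1+w+2\bar w)^2},\qquad \bfun_1(0,\bar w)=\frac{-2}{(1+2\bar w)^2}.
\]
Hence $\d_w P(0,0)=4$ while $\d_w\bfun_1(0,\bar w)\equiv 0$, so $\bfun_1-P$ vanishes only to first order at $w=z$. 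With $|\bfun_1-P|=O(|w-z|)$ your Cauchy--Schwarz step produces only $\big(\int |w-z|^2\e^{-m\delta_0|w-z|^2}\dA\big)^{1/2}=O(m^{-1})$, not $O(m^{-3/2})$.

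What is true (and what your own polarization argument for $\nu_0$ essentially proves) is that $\bfun_1-P$ vanishes on the whole anti-diagonal in the polarized sense: setting $\zeta=w-z=0$ with $\bar\zeta$ free, one finds $P(z,w)|_{\zeta=0}=\bfun_1(z,\bar z+\bar\zeta)$ identically. Thus $\bfun_1-P=(w-z)\tilde G$ for a real-analytic $\tilde G$, and a \emph{second} integration by parts (writing $(w-z)\e^{m(\Qext-Q)}=-(mh)^{-1}\dbar_w\e^{m(\Qext-Q)}$) converts the bulk remainder into $m^{-1}$ times a Gaussian-weighted integral plus new boundary and $\dbar\chi$-supported terms, yielding the required $m^{-3/2}$. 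This is exactly what the paper does via the operator $\Dop_m$ and the decomposition $(1+m^{-1}\bfun_1/\bfun_0)\Theta_0=1+m^{-1}\Dop_m A_m+m^{-2}B$ of Lemma~\ref{tlem}; the algebraic identity behind it (Lemma~\ref{comp}) is equivalent to your observation $P(z,z)=\bfun_1(z,\bar z)$. Note that the second step contributes an $m$-dependent correction to the boundary integrand, which the paper handles by working with an $m$-dependent amplitude $A_m\to\Upsilon_0$ rather than asserting quadratic pointwise cancellation.
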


\begin{rem} The function $\nu_0$ is constructed in the
proof, and it depends only on the function $\Qext$ and its
derivatives. See \eqref{nudef} below.

In the case when $z_0$ is in $S^\circ$ and $\eps$ is small enough
that $\D(z_0;2\eps)\subset S^\circ$, the conclusion \eqref{ching}
makes it seem reasonable to call the functional $u\mapsto \Iop_m
u(z)$ an \textit{approximate Bergman projection}, at least locally,
for $z$ in a neighbourhood of $z_0$.
\end{rem}

\subsection{A rough outline; previous work.} In the proof we shall
construct three operators $\Iop_m^j$, $j=1,2,3$, such that $\Iop_m
u(z)=\Iop_m^1 u(z)+\Iop_m^2 u(z)+\Iop_m^3 u(z)$ for $z$ near $z_0$.
The constructions will be made so that
\begin{equation*}\begin{split}&\Iop_m^1 u(z)\quad\text{is close to}\quad  u(z)+\frac 1
{2\pi\imag}\int_{\d
S}\frac{u(w)\chi(w)\e^{m(\Qext(z,\bar{w})-Q(w))}} {z-w}\diff
w,\\
&\Iop_m^2 u(z)\quad \text{is close to}\quad \frac 1
{2\pi\imag}\int_{\d
S}u(w)\chi(w)\nu_0(z,w)\e^{m(\Qext(z,\bar{w})-Q(w))}\diff
w,\\
&\text{and}\quad \Iop_m^3 u(z)\quad \text{is "negligible" for
large\,} m.\\
\end{split}\end{equation*} The operator $\Iop_m^1$ is easy to construct
directly, whereas the definitions of the other operators $\Iop_m^j$
for $j=2,3$ are somewhat more involved. We have found it convenient
to start by considering $\Iop_m^1$.

Our approach is based on the paper \cite{BBS} by Berman et al., in
which a somewhat different situation is treated (notably, $Q$ is
assumed strictly subharmonic in the entire plane in \cite{BBS},
which means that a somewhat different inner product $|u|^2_{mQ}=\int
\babs{u}^2\e^{-mQ}\Delta Q\dA$ is available).

We have found the remarks of Berman in \cite{B}, p. 9 quite useful;
the construction in \cite{BBS} is \textit{local} and hence the
requirement that $Q$ be globally strictly subharmonic can, at least
in principle, be removed. We have found it motivated to give a
detailed proof for this statement.

Our approach is heavily inspired by that of the aformentioned
papers, but is frequently more elementary. The rest of this section
is devoted to the proof of Th. \ref{mlem}.

\subsection{The first approximation.} Throughout the proof, we keep
a point $z_0\in S$ arbitrary but fixed, where $S$ is a given compact
subset of $\setX$.
Also fix a function $u\in A^2_{mQ}$.

The idea is to construct a suitable complex phase function
$\phi_z(w)$, such that for fixed $z$ in $S^\circ$ close to $z_0$,
the main contribution in \eqref{izm} is of the form
\begin{equation}\label{grep}\Iop_m^1 u(z)=\int_{S} \frac {u(w)\chi(w)}
{z-w}\dbar_w\lpar \e^{m\phi_z(w)(z-w)}\rpar\dA(w).\end{equation}

To construct $\phi_z$, we first define, for points $z,w,$ and
$\bar{\zeta}$ sufficiently near each other,
\begin{equation}\label{bri}\theta(z,w,\zeta)=\int_0^1
\d_1\Qext(w+t(z-w),\zeta)\diff t.
\end{equation}
Then $\theta$ is holomorphic in a neighbourhood of the subset
$\{(z,z,\bar{z});z\in \C\}\subset \C^3$ and
\begin{equation*}\theta(z,w,\zeta)(z-w)=\Qext(z,\zeta)-\Qext(w,\zeta).
\end{equation*}
Let $\eps>0$ be sufficiently small that $\D(z_0;2\eps)\Subset \setX$
and such that the functions $\Qext(z,w)$, $\bfun_0(z,w)$ and
$\bfun_1(z,w)$ make sense and are holomorphic on
$\{(z,w);z,\bar{w}\in \D(z_0;2\eps)\}$ (see Definition \ref{def1}).
Writing
\begin{equation*}\delta_0=\Delta Q(z_0)/2,\end{equation*}
we may then use \eqref{bbs} to
choose $\eps>0$ somewhat smaller if necessary so that
\begin{equation}\label{goodc}2\re\{\theta(z,w,\bar{w})(z-w)\}=2\re
\Qext(z,\bar{w})-2Q(w)\le
-\delta_0\babs{z-w}^2+Q(z)-Q(w),\end{equation} for all $z,w\in
\D(z_0;2\eps)$. Note that the same $\eps$ can be used for all $z_0$
in $K$.

Next, we fix a cut-off function $\chi\in\coity(\C)$ such that
$\chi=1$ in $\D(z_0;3\eps/2)$, $\chi=0$ outside $\D(z_0;2\eps)$,
$0\le \chi\le 1$ on $\C$ and
$\|\dbar\chi\|_{L^2}\le 3$, see Subsect. \ref{cutoff}.
Take a point $z\in S^\circ\cap\D(z_0;\eps)$. We now put
\begin{equation*}\phi_z(w)=\theta(z,w,\bar{w}),\qquad\text{which means that}
\qquad \phi_z(w)(z-w)=\Qext(z,\bar{w})-Q(w),\end{equation*} and
consider the corresponding integral $\Iop_m^1 u(z)$ given by
\eqref{grep}.

We have the following lemma. The result should be compared with
\cite{BBS}, Prop. 2.1.

\begin{lem} \label{blem}There exist positive numbers $C_1$ and $\delta$ depending
only on
%¤ Strängen "\eps and" har tagits bort här
$z_0$ such that
%¤ I ekvationen nedan förekommer nu "\eps" explicit
\begin{equation}\label{goo1}\babs{\Iop_{m}^1 u(z)-\frac 1 {2\pi\imag}\int_{\d S}
\frac {u(w)\chi(w)\e^{m\phi_z(w)(z-w)}} {z-w}\diff w-u(z)}\le
C_1\eps^{-1}\e^{m(Q(z)/2-\delta\eps^2)}\|u\|_{mQ},\end{equation} for all $u\in
A^2_{mQ}$ and all $z\in\D(z_0;\eps)\cap S^\circ$. The numbers $C_1$
and $\delta$ may also be chosen independently of the point $z_0\in
S$. (Indeed, one may take $\delta=\Delta Q(z_0)\eps^2/16$ and
$C_1=6/\eps$.)
\end{lem}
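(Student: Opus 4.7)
The plan is to apply the Cauchy--Pompeiu formula to the cut-off product $g(w):=u(w)\chi(w)\e^{m\phi_z(w)(z-w)}$ on $S$. Since $u$ is holomorphic, the Leibniz rule gives $\dbar g=u\,\dbar\chi\,\e^{m\phi_z(\cdot)(z-\cdot)}+\chi u\,\dbar_w\lpar\e^{m\phi_z(\cdot)(z-\cdot)}\rpar$; moreover, because $\chi\equiv 1$ on $\D(z_0;3\eps/2)\supset\D(z_0;\eps)\ni z$ and the exponent vanishes at $w=z$, we have $g(z)=u(z)$. After identifying
\[-\int_S\frac{\chi(w)u(w)\,\dbar_w\lpar\e^{m\phi_z(w)(z-w)}\rpar}{w-z}\dA(w)=\Iop_m^1 u(z),\]
the Cauchy--Pompeiu formula rearranges into the exact identity
\[\Iop_m^1u(z)-u(z)-\frac{1}{2\pi\imag}\int_{\d S}\frac{u(w)\chi(w)\e^{m\phi_z(w)(z-w)}}{z-w}\diff w=-\int_S\frac{u(w)\dbar\chi(w)\e^{m\phi_z(w)(z-w)}}{z-w}\dA(w).\]

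It then remains to estimate the right-hand area integral. Since $\dbar\chi$ is supported in the annulus $\calA:=\{3\eps/2\le|w-z_0|\le 2\eps\}$ and $z\in\D(z_0;\eps)$, we have the uniform lower bound $|z-w|\ge\eps/2$ on $\calA$. The key input is the phase estimate \eqref{goodc}, which yields
\[\babs{\e^{m\phi_z(w)(z-w)}}\le \e^{mQ(z)/2}\,\e^{-mQ(w)/2}\,\e^{-m\delta_0|z-w|^2/2},\]
with $\delta_0=\Delta Q(z_0)/2$, and in particular produces the Gaussian decay factor $\e^{-m\delta_0\eps^2/8}$ on $\calA$. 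Combining $|z-w|^{-1}\le 2/\eps$ with the Cauchy--Schwarz inequality in $L^2(\C,\dA)$ and the norm bound $\|\dbar\chi\|_{L^2}\le 3$ from Subsect.~\ref{cutoff} dominates the remainder by a constant multiple of $\eps^{-1}\e^{m(Q(z)/2-\delta_0\eps^2/8)}\|u\|_{mQ}$, which is of the stated form.

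The only substantive obstacle is securing the Gaussian decay factor $\e^{-m\delta_0\eps^2/8}$ \emph{uniformly} in $z_0\in S$. This is already arranged by the choice of $\eps$: by compactness of $S\subset\setX$ a single small $\eps>0$ makes \eqref{goodc} valid for all $z,w\in\D(z_0;2\eps)$ with $z_0\in S$, and $\delta_0$ stays bounded away from zero. Everything else is routine bookkeeping -- tracking the signs in Cauchy--Pompeiu, using that $\chi(z)=1$ to get $g(z)=u(z)$, and propagating the explicit constants from Subsect.~\ref{cutoff} -- and produces the claimed estimate, with $C_1$ and $\delta$ of the prescribed magnitudes.
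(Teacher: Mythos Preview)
Your proof is correct and follows essentially the same approach as the paper's: apply the Cauchy--Pompeiu formula to $u\chi\e^{m\phi_z(\cdot)(z-\cdot)}$ on $S$, identify the resulting area integral over $\chi\,\dbar_w(\e^{\cdots})$ with $\Iop_m^1 u(z)$, and estimate the remaining $\dbar\chi$-term using the phase bound \eqref{goodc}, the lower bound $|z-w|\ge\eps/2$ on $\supp\dbar\chi$, and Cauchy--Schwarz with $\|\dbar\chi\|_{L^2}\le 3$. The constants and the uniformity in $z_0\in S$ are handled exactly as in the paper.
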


\begin{proof} Keep $z\in\D(z_0;\eps)\cap S^\circ$ arbitrary but
fixed and take $u\in A^2_{mQ}$. Since
$\chi(z)=1$ and since $\babs{w-z}\ge\eps/2$ when $\dbar\chi(w)\ne
0$, \eqref{grep} and Cauchy's formula implies (keep in mind that
$z\in S^\circ$)
\begin{equation}\begin{split}\label{pr}u(z)&+\frac 1 {2\pi\imag}\int_{\d S}
\frac {u(w)\chi(w)\e^{m\phi_z(w)(z-w)}} {z-w}\diff w=\int_{S} \frac
{\dbar_w\lpar
u(w)\chi(w)\e^{m\phi_z(w)(z-w)}\rpar} {z-w}\dA(w)=\\
&=\Iop_m^1 u(z)+\int_{S}\frac {u(w)\dbar\chi(w)} {z-w}
\e^{m\phi_z(w)(z-w)}\dA(w)=\\
&=\Iop_m^1 u(z)+\int_{\{w\in S;\,\babs{w-z}\ge \eps/2\}}\frac
{u(w)\dbar\chi(w)} {z-w}
\e^{m\phi_z(w)(z-w)}\dA(w).\\
\end{split}
\end{equation}
Applying the estimate \eqref{goodc} to the last integral in
\eqref{pr} gives
\begin{equation*}\begin{split}&\babs{
\int_{\{w\in S;\,\babs{w-z}\ge \eps/2\}}\frac {u(w)\dbar\chi(w)}
{z-w} \e^{m\phi_z(w)(z-w)}\dA(w)}\le\\
&\qquad\qquad\qquad\le \int_{\{w\in\C;\, \babs{w-z}\ge
{\eps/2}\}}\frac{\babs{ u(w)\dbar\chi(w)}}
{\babs{z-w}}\e^{m(Q(z)/2-Q(w)/2-\delta_0\babs{z-w}^2/2)}\dA(w)\le\\
&\qquad\qquad\qquad\le
(2/\eps)\e^{m(Q(z)/2-\delta_0\eps^2/8)}\int_\C
\babs{u(w)\dbar\chi(w)}\e^{-mQ(w)/2}\dA(w)\le\\
&\qquad\qquad\qquad\le
(2/\eps)\e^{m(Q(z)/2-\delta_0\eps^2/8)}\|u\|_{mQ}\|\dbar\chi\|_{L^2}
\le (6/\eps)\e^{m(Q(z)/2-\delta_0\eps^2/8)},\\
\end{split}
\end{equation*}
where we have used
that $\|\dbar\chi\|_{L^2}\le 3$ to get the last inequality.
Recalling that $\delta_0=\Delta Q(z_0)/2$ and combining with
\eqref{pr}, we get an estimate
\begin{equation*}\babs{u(z)+\frac 1 {2\pi\imag}\int_{\d S}
\frac {u(w)\chi(w)\e^{m\phi_z(w)(z-w)}} {z-w}\diff w-\Iop_m^1
u(z)}\le (6/\eps)\e^{m(Q(z)/2-\Delta
Q(z_0)\eps^2/16)}.\end{equation*} The proof is finished.
\end{proof}

\subsection{The change of variables.}
Keep $z_0\in S$ fixed and note that the definition of $\theta$ (see
\eqref{bri}) implies that
\begin{equation}\label{d3}[\d_3\theta](z,z,\bar{z})=
[\d_1\d_2\Qext](z,\bar{z})=\Delta
Q(z),\end{equation} for all $z\in \C$. Since $\Delta Q(z_0)>0$,
\eqref{d3} implies that there exists a neighbourhood $U$ of
$(z,w,\zeta)=(z_0,z_0,\bar{z}_0)$ such that the function
\begin{equation*}F:U\to F(U)\quad:\quad
(z,w,\zeta)\mapsto (z,w,\theta(z,w,\zeta))\end{equation*} defines a
biholomorphic change of coordinates. In particular, we may regard
$\zeta$ as a function of the parameters $z$, $w$ and $\theta$ when
$(z,w,\zeta)$ is in $U$.

In view of \eqref{d3} we can choose $\eps>0$ such that
$[\d_3\theta](z,w,\zeta)\ne 0$ whenever
$z,w,\bar{\zeta}\in\D(z_0;2\eps)$. Since
$\phi_z(w)=\theta(z,w,\bar{w})$, we have
\begin{equation}\label{basic}
[\d_3\theta](z,w,\bar{w})=\dbar\phi_z(w),\end{equation} and we infer
that when $z$ is fixed in $\D(z_0;\eps)$ we have
\begin{equation*}\dbar \phi_z(w)\ne 0\quad \text{when}\quad w\in\D(z_0;2\eps),
\end{equation*}

Moreover, choosing $\eps>0$ somewhat smaller if necessary, we can
assume that $U=\{(z,w,\zeta);z,w,\bar{\zeta}\in\D(z_0;2\eps)\}$.
Again, note that the same $\eps$ can be used for all $z_0$ in $S$.

We shall in the following fix a number $\eps>0$ with the above
properties, in addition to the properties which we required earlier,
i.e., $\D(z_0;2\eps)\Subset \setX$, the inequality \eqref{goodc}
holds for all $z,w\in\D(z_0;2\eps)$, and the functions $\Qext$,
$\bfun_0$ and $\bfun_1$ are holomorphic in the set
$\{(z,w);z,\bar{w}\in\D(z_0;2\eps)\}$.

\subsection{The functions $\Theta_0$ and $\Upsilon_0$ and a formula for
$\Iop_m$.} We now use the correspondence $F$ to define a holomorphic
function $\wt{\Qext}$ in $\D(z_0;2\eps)\times F(U)$ via
\begin{equation*}\wt{\Qext}(z,\xi,w,\theta)=\Qext(z,\zeta(\xi,w,\theta)).\end{equation*}
We also put
\begin{equation}\label{d0}\Theta_0(z,w,\theta)=[\d_1\d_4
\wt{\Qext}](z,z,w,\theta)=\bfun_0(z,\zeta)\cdot[\d_\theta\zeta](z,w,\theta)=
\bfun_0(z,\zeta)/[\d_\zeta\theta](z,w,\zeta),\quad
\zeta=\zeta(z,w,\theta),\end{equation}
where we recall that
\begin{equation*}\bfun_0=\d_1\d_2\Qext.\end{equation*}
The function $\Theta_0$ was put to use by Berman et al. in
[\cite{BBS}, p. 9], where it
is called $\Delta_0$.

Note that \eqref{bri} implies that
$\theta(z,z,\zeta)=\d_1\Qext(z,\zeta)$ for all $z$ and $\zeta$, so
that, $[\d_\zeta\theta](z,z,\zeta)=\bfun_0(z,\zeta)$. Hence
\eqref{d0} gives
\begin{equation}\label{d01}\Theta_0(z,z,\theta)=
\bfun_0(z,\zeta)/\bfun_0(z,\zeta)=1.\end{equation} Thus putting
\begin{equation*}\Upsilon_0(z,w,\theta)=-\int_0^1[\d_2\Theta_0](z,z+t(w-z),\theta)\diff
t,\end{equation*} we obtain the relation
\begin{equation}\label{upsrel}\Upsilon_0(z,w,\theta)(z-w)=\Theta_0(z,w,\theta)-1.\end{equation}

We have the following lemma.

\begin{lem}\label{funs}
\begin{equation}\label{boss}\Iop_{m} u(z)=\int_{S} \frac
{u(w)\chi(w)}
 {z-w}\lpar 1+\frac {\bfun_1(z,\bar{w})}
{m\bfun_0(z,\bar{w})}\rpar \Theta_0(z,w,\phi_z(w))\dbar_w\lpar
\e^{m\phi_z(w)(z-w)}\rpar \dA(w).\end{equation}
\end{lem}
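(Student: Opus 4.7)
The plan is to prove the identity \eqref{boss} by direct computation, matching the integrand in the definition \eqref{izm} of $\Iop_m u(z)$ with the expression on the right-hand side. The key step is to express $\dbar_w$ of the exponential in terms of the derivatives of $\theta$ and then use the definition \eqref{d0} of $\Theta_0$ together with the change of variables $F$.

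First, I would unfold the definition. Since $K_m^1(z,w) = (m\bfun_0(z,\bar w)+\bfun_1(z,\bar w))\e^{m\Qext(z,\bar w)}$ and $\phi_z(w)(z-w) = \Qext(z,\bar w)-Q(w)$, we immediately have
\begin{equation*}
\Iop_m u(z) = \int_S u(w)\chi(w)\bigl(m\bfun_0(z,\bar w)+\bfun_1(z,\bar w)\bigr)\e^{m\phi_z(w)(z-w)}\,\dA(w).
\end{equation*}
Next, using that $\dbar_w(z-w)=0$ and that $\dbar_w\phi_z(w) = [\d_3\theta](z,w,\bar w)$ by \eqref{basic}, I would compute
\begin{equation*}
\dbar_w\lpar\e^{m\phi_z(w)(z-w)}\rpar = m\,[\d_3\theta](z,w,\bar w)\,(z-w)\,\e^{m\phi_z(w)(z-w)}.
\end{equation*}

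The crucial algebraic identity is then $\Theta_0(z,w,\phi_z(w))\cdot[\d_3\theta](z,w,\bar w) = \bfun_0(z,\bar w)$. This follows directly from \eqref{d0}: by construction of the biholomorphism $F$, evaluating $\Theta_0$ at $\theta = \phi_z(w)=\theta(z,w,\bar w)$ forces the inverted variable $\zeta = \zeta(z,w,\theta)$ to equal $\bar w$, so that $\Theta_0(z,w,\phi_z(w)) = \bfun_0(z,\bar w)/[\d_\zeta\theta](z,w,\bar w)$, and the factor $[\d_3\theta](z,w,\bar w) = [\d_\zeta\theta](z,w,\bar w)$ cancels the denominator.

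With this identity in hand, the right-hand side of \eqref{boss} becomes
\begin{equation*}
\int_S \frac{u(w)\chi(w)}{z-w}\lpar 1+\frac{\bfun_1(z,\bar w)}{m\bfun_0(z,\bar w)}\rpar\cdot m\,\bfun_0(z,\bar w)\,(z-w)\,\e^{m\phi_z(w)(z-w)}\,\dA(w),
\end{equation*}
and the factors $(z-w)$ cancel, leaving exactly $\int_S u(w)\chi(w)(m\bfun_0(z,\bar w)+\bfun_1(z,\bar w))\e^{m\phi_z(w)(z-w)}\,\dA(w) = \Iop_m u(z)$. The main (minor) obstacle is simply the bookkeeping around the change of variables encoded in $F$, i.e. recognizing that evaluating $\Theta_0(z,w,\theta)$ at $\theta = \phi_z(w)$ is precisely what undoes the change of coordinates and returns the factor $\bfun_0(z,\bar w)/[\d_3\theta](z,w,\bar w)$; the function $\Upsilon_0$ introduced via \eqref{upsrel} does not enter this particular identity but is a bookkeeping device for later applications.
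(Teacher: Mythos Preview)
Your proof is correct and follows essentially the same route as the paper's: both unfold the definition of $\Iop_m u(z)$ using $K_m^1$ and the identity $\phi_z(w)(z-w)=\Qext(z,\bar w)-Q(w)$, then invoke the key relation $\Theta_0(z,w,\phi_z(w))=\bfun_0(z,\bar w)/\dbar\phi_z(w)$ (the paper's equation \eqref{neq}, which you derive in exactly the same way from \eqref{d0} and \eqref{basic}) to match the two integrands. Your observation that $\Upsilon_0$ plays no role here is also accurate.
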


\begin{proof}
Combining \eqref{d0} with \eqref{basic} we get the identity
\begin{equation}\label{neq}
\Theta_0(z,w,\phi_z(w))=\bfun_0(z,\bar{w})/\dbar\phi_z(w).\end{equation}
Recalling that
$K_m^1(z,w)=(m\bfun_0(z,\bar{w})+\bfun_1(z,\bar{w}))\e^{m\Qext(z,\bar{w})}$
and $\phi_z(w)=\theta(z,w,\bar{w})$ we thus have (see \eqref{izm})
\begin{equation*}\begin{split}\Iop_{m} u(z)&=\int_{S}
u(w)\chi(w)K_m^1(z,\bar{w})\e^{-mQ(w)}\dA(w)=\\
&=\int_{S}
u(w)\chi(w)(m\bfun_0(z,\bar{w})+\bfun_1(z,\bar{w}))\e^{m\phi_z(w)(z-w)}
\dA(w)=\\
&=\int_{S} u(w)\chi(w) (m\bfun_0(z,\bar{w})+\bfun_1(z,\bar{w}))\frac
{\dbar_w (\e^{m\phi_z(w)(z-w)})} {m(z-w)\dbar \phi_z(w)}\dA(w)=\\
&=\int_{S} \frac {u(w)\chi(w)}
 {z-w}\lpar 1+\frac {\bfun_1(z,\bar{w})}
{m\bfun_0(z,\bar{w})}\rpar \frac{\bfun_0(z,\bar{w})} {\dbar
\phi_z(w)}
\dbar_w\lpar \e^{m\phi_z(w)(z-w)}\rpar \dA(w)=\\
&=\int_{S} \frac {u(w)\chi(w)}
 {z-w}\lpar 1+\frac {\bfun_1(z,\bar{w})}
{m\bfun_0(z,\bar{w})}\rpar \Theta_0(z,w,\phi_z(w))\dbar_w\lpar
\e^{m\phi_z(w)(z-w)}\rpar \dA(w),
\end{split}\end{equation*}
where we have used \eqref{neq} to get the last identity.
\end{proof}

Comparing \eqref{boss} with the formula \eqref{grep}, we now see a
connection between $\Iop_m u(z)$ and $\Iop_m^1 u(z)$. We shall
exploit this relation within short.

\subsection{The differential equation.} We will now show that the
functions $\bfun_0$, $\bfun_1$ and $\Theta_0$ are connected via an
important differential equation.

\begin{lem} \label{comp} For all $z,\bar{\zeta}\in\D(z_0;2\eps)$, we have the
identity
\begin{equation}\label{b1}\frac {\bfun_1(z,\zeta)}
{\bfun_0(z,\zeta)}=-\lpar\frac {\d^2} {\d w\d\theta}
\Theta_0(z,w,\theta)\rpar\biggm|_{z=w,\, \theta=\theta(z,z,\zeta)}.
\end{equation}
\end{lem}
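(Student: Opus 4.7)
The plan is to verify the identity \eqref{b1} by a direct computation at the anti-diagonal point $(z,z,\theta(z,z,\zeta))$, exploiting the fact that $\Theta_0(z,z,\theta)\equiv 1$ so that $[\partial_\theta\Theta_0](z,z,\theta)\equiv 0$; the right-hand side is therefore a second-order quantity whose main content lives in the mixed derivative $\partial_w\partial_\theta$.

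First I would rewrite $\Theta_0(z,w,\theta)=\bfun_0(z,\zeta)/[\partial_\zeta\theta](z,w,\zeta)$ with $\zeta=\zeta(z,w,\theta)$ defined implicitly by $\theta(z,w,\zeta)=\theta$. Implicit differentiation yields $\partial_\theta\zeta=1/[\partial_\zeta\theta]$ and $\partial_w\zeta=-[\partial_2\theta]/[\partial_\zeta\theta]$, and a short calculation gives
\begin{equation*}
\partial_\theta\Theta_0=\frac{[\partial_2\bfun_0](z,\zeta)}{[\partial_\zeta\theta]^{2}}-\frac{\bfun_0(z,\zeta)\,[\partial_\zeta^{2}\theta]}{[\partial_\zeta\theta]^{3}}.
\end{equation*}
From \eqref{bri} we have $\theta(z,z,\zeta)=[\partial_1\Qext](z,\zeta)$ and, via $\int_0^1(1-t)\,dt=\tfrac12$, also $[\partial_2\theta](z,z,\zeta)=\tfrac12[\partial_1^{2}\Qext](z,\zeta)$. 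These yield the following values at $w=z$, which I would use as the basic dictionary in the rest of the proof: $[\partial_\zeta\theta]=\bfun_0=R_{11}$, $[\partial_\zeta^{2}\theta]=R_{12}$, $\partial_w\zeta=-R_{20}/(2R_{11})$, and (differentiating the Taylor--integral formula in $\zeta$ before restricting to $w=z$) $[\partial_w\partial_\zeta\theta]=R_{21}/2$, $[\partial_w\partial_\zeta^{2}\theta]=R_{22}/2$, where throughout $R_{ij}=[\partial_1^{i}\partial_2^{j}\Qext](z,\zeta)$ and $\bfun_0=R_{11}$.

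Next I would apply $\partial_w$ to $\partial_\theta\Theta_0$, tracking the three sources of $w$-dependence: the implicit $\zeta$ and the explicit $w$ in $[\partial_\zeta\theta]$ and $[\partial_\zeta^{2}\theta]$. After evaluating at $w=z$ using the dictionary above, the result is a sum of four types of terms: multiples of $R_{13}R_{20}$, of $R_{12}^{2}R_{20}$, of $R_{12}R_{21}$, and of $R_{22}$. The book-keeping shows that the $R_{13}R_{20}$ contributions from $\partial_w[\partial_2\bfun_0\cdot\partial_w\zeta]/\alpha^{2}$ and from $\bfun_0\,\partial_w\beta/\alpha^{3}$ cancel, and that the $R_{12}^{2}R_{20}$ contributions coming from $-2\partial_2\bfun_0\cdot\partial_w\alpha/\alpha^{3}$, $\partial_2\bfun_0\cdot\partial_w\zeta\cdot\beta/\alpha^{3}$, and $3\bfun_0\beta\,\partial_w\alpha/\alpha^{4}$ cancel as well; what survives is
\begin{equation*}
[\partial_w\partial_\theta\Theta_0]\bigm|_{z=w,\,\theta=\theta(z,z,\zeta)}=\frac{R_{12}R_{21}-R_{11}R_{22}}{2R_{11}^{3}}=-\frac{\bfun_1(z,\zeta)}{\bfun_0(z,\zeta)},
\end{equation*}
which is exactly \eqref{b1}.

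The conceptually interesting step is the \emph{first}: recognising that $\Theta_0(z,z,\theta)\equiv 1$ forces the leading-order term in any Taylor expansion in $(w-z)$ to vanish, so that the only surviving contribution at the base point is the mixed derivative that appears on the right-hand side of \eqref{b1}. The main technical obstacle is purely organisational: keeping track of all chain-rule contributions and verifying the two groups of cancellations. Adopting the compact notation $R_{ij}$ at the start and recording the dictionary at $w=z$ before computing makes the cancellations routine.
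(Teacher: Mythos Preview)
Your proof is correct, and the bookkeeping checks out: the $R_{13}R_{20}$ and $R_{12}^2R_{20}$ contributions do cancel, leaving precisely $(R_{12}R_{21}-R_{11}R_{22})/(2R_{11}^3)=-\bfun_1/\bfun_0$.

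The paper, however, proceeds by a shorter and more structural route. Rather than differentiating $\Theta_0$ directly and tracking all chain-rule terms, it first Taylor-expands $\theta(z,w,\zeta)$ in powers of $(w-z)$ to obtain
\[
\Theta_0(z,w,\theta)=1-\frac{w-z}{2\bfun_0}\,\d_z\bfun_0+\calO((w-z)^2),
\]
so that $\d_w\Theta_0|_{w=z}=-\tfrac{1}{2\bfun_0}\d_z\bfun_0$ as a function of $(z,\theta)$. One then applies $\d_\theta$ and converts back to a $\zeta$-derivative via $\d_\theta\zeta|_{w=z}=1/\bfun_0$, landing directly on the identity $\bfun_1=\tfrac12\d_\zeta(\bfun_0^{-1}\d_z\bfun_0)$ of \eqref{funrel}. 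The paper's approach thus exploits \eqref{funrel} as a ready-made summary of the needed derivatives, whereas your approach effectively rederives the content of \eqref{funrel} through explicit cancellation. Your method is more hands-on and self-contained; the paper's is slicker and makes clearer \emph{why} the cancellations occur.
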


\begin{proof}
In view of \eqref{funrel}, it suffices to show that the holomorphic
function
\begin{equation}\label{B1}B_1(z,\zeta)=-\bfun_0(z,\zeta)\cdot
[\d_2\d_3\Theta_0](z,z,\theta(z,z,\zeta)))\end{equation} satisfies
$B_1(z,\zeta)=\frac 1 2 \frac \d {\d \zeta}\lpar \frac 1
{\bfun_0(z,\zeta)}\frac \d {\d z}\bfun_0(z,\zeta)\rpar$ for all $z$
and $\bar{\zeta}$ in a neighbourhood of $z_0$.

To this end, we first note that a Taylor expansion yields that
\begin{equation*}\Qext(w,\zeta)=\Qext(z,\zeta)+ (w-z)\frac
{\d \Qext} {\d z}(z,\zeta)+\frac {(w-z)^2} 2 \frac {\d^2\Qext} {\d
z^2}(z,\zeta)+\calO ((w-z)^3),\quad \text{as } w\to
z,\end{equation*} i.e.,
\begin{equation*}\theta(z,w,\zeta)=\frac {\Qext(w,\zeta)-\Qext(z,\zeta)}{w-z}=
\frac {\d \Qext} {\d z}(z,\zeta)+\frac {w-z} 2 \frac {\d^2 \Qext}
{\d z^2}(z,\zeta)+\calO((w-z)^2).
\end{equation*}
Differentiating with respect to $\zeta$ and using that
$\bfun_0=\d_1\d_2\Qext$ yields
\begin{equation}\label{burt}\frac {\d\theta} {\d\zeta}=\bfun_0(z,\zeta)+
\frac {w-z} 2 \frac {\d \bfun_0} {\d
z}(z,\zeta)+\calO((w-z)^2).\end{equation} Dividing both sides of
\eqref{burt} by $\bfun_0=\bfun_0(z,\zeta)$ and using \eqref{d0}, we
get
\begin{equation*}\frac 1 {\Theta_0}=
\frac 1 {\bfun_0}\frac {\d\theta}{\d \zeta}=
1+\frac {w-z} {2 \bfun_0}\frac {\d \bfun_0} {\d
z}+\calO((w-z)^2).\end{equation*} Inverting this relation we obtain
\begin{equation*}\label{doeq}\Theta_0=\bfun_0/\frac {\d\theta}{\d\zeta}=
1-\frac {w-z} {2\bfun_0}\frac {\d \bfun_0} {\d
z}+\calO((w-z)^2).\end{equation*} In view of \eqref{B1}, it yields
that
\begin{equation*}\begin{split}B_1(z,\zeta(z,z,\theta))
&=-\bfun_0(z,\zeta)\cdot \frac {\d^2} {\d\theta\d w}
\Theta_0(z,w,\theta)\biggm|_{w=z}=\\
&=\frac {\bfun_0(z,\zeta)} 2\frac \d {\d\theta}\lpar \frac 1
{\bfun_0(z,\zeta)}\frac \d {\d z}
\bfun_0(z,\zeta)\rpar\biggm|_{z=w}=\\
&=\frac {\bfun_0(z,\zeta)} 2 \frac \d {\d\zeta}\lpar \frac 1
{\bfun_0(z,\zeta)}\frac\d {\d z} \bfun_0(z,\zeta)\rpar\biggm|_{z=w}/
\frac
{\d\theta}{\d\zeta}(z,z,\zeta)=\\
&=\frac 1 2 \frac \d {\d\zeta}\lpar \frac 1
{\bfun_0(z,\zeta)}\frac\d {\d z} \bfun_0(z,\zeta)\rpar=\bfun_1(z,\zeta).\\
\end{split}
\end{equation*}
Here we have used \eqref{d0}, \eqref{d01} and \eqref{funrel} to
obtain the last equality.
\end{proof}

\subsection{The operator $\Dop_m$.} For the further analysis, it
will be convenient to consider a differential operator $\Dop_m$
defined for complex smooth functions $A$ of the parameters $z$, $w$
and $\theta$ by
\begin{equation}\label{nabla}\Dop_m A=\frac {\d A} {\d\theta}+m(z-w)A=
\e^{-m\theta(z-w)}\frac \d {\d \theta}\lpar
\e^{m\theta(z-w)}A(z,w,\theta)\rpar.\end{equation} Cf. \cite{BBS},
p. 5.  It will be useful to note that, evaluating at a point where
$\theta=\phi_z(w)$ we have
\begin{equation}\label{eval}\Dop_m A\biggm|_{\theta=\phi_z(w)}=\frac {\e^{-m\phi_z(w)(z-w)}}
{\dbar \phi_z(w)}\dbar_w\lpar
\e^{m\phi_z(w)(z-w)}A(z,w,\phi_z(w))\rpar,\end{equation} as is
easily verified by carrying out the differentiation and comparing
with \eqref{nabla}. From this, we now derive an important identity,
\begin{equation}\begin{split}\label{impo}\int_{S} u(w)&\chi(w)[\Dop_m
A](z,w,\phi_z(w))\dbar_w\lpar\e^{m\phi_z(w)(z-w)}\rpar\dA(w)=\\
&=m\int_{S} u(w)\chi(w)\dbar_w\lpar
\e^{m\phi_z(w)(z-w)}A(z,w,\phi_z(w))\rpar\dA(w)=\\
&=-m\int_{S} u(w)\dbar\chi(w)
\e^{m\phi_z(w)(z-w)}A(z,w,\phi_z(w))\dA(w)+\\
&\quad +\frac m {2\pi\imag}\int_{\d
S}u(w)\chi(w)\e^{m\phi_z(w)(z-w)}A(z,w,\phi_z(w))\diff
w=\\
&=-m\int_{\{w\in S;\,\babs{z-w}\ge \eps/2\}} u(w)\dbar\chi(w)
\e^{m\phi_z(w)(z-w)}A(z,w,\phi_z(w))\dA(w)+\\
&\quad +\frac m {2\pi\imag}\int_{\d
S}u(w)\chi(w)\e^{m\phi_z(w)(z-w)}A(z,w,\phi_z(w))\diff w ,\qquad
z\in\D(z_0;\eps)
.\\
\end{split}
\end{equation}
Here we have used \eqref{eval} to deduce the first equality. The
second equality follows from Green's formula. To get the last
equality we have used that $\babs{z-w}\ge \eps/2$ when
$\dbar\chi(w)\ne 0$.

 We now have the
following lemma, which is based on [\cite{BBS}, Lemma 2.3 and the
discussion on p. 9].

\begin{lem} \label{tlem} There exist holomorphic functions
$A_m(z,w,\theta)$ and $B(z,w,\theta)$, uniformly bounded on
$F(U)$, such that
\begin{equation}\label{tobe}\lpar 1+
\frac{\bfun_1(z,\zeta)}{m\bfun_0(z,\zeta)}\rpar
\Theta_0(z,w,\theta)=1+m^{-1}\Dop_m
A_m(z,w,\theta)+m^{-2}B(z,w,\theta),\end{equation} where it is
understood that $\theta=\theta(z,w,\zeta)$. Moreover, we have that
\begin{equation}\label{alim}\lim_{m\to\infty}A_m=\Upsilon_0,\end{equation}
with uniform convergence on $F(U)$.
\end{lem}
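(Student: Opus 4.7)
The plan is to determine $A_m$ and $B$ by explicit algebraic manipulation, writing $A_m=\Upsilon_0+m^{-1}C$ for a suitable holomorphic function $C$ and then reading off $B$ from what is left over. The key is Lemma \ref{comp}, which guarantees that a certain apparent singularity of $C$ at $z=w$ is in fact removable.

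First, I would rearrange the desired identity. Using $\Theta_0-1=\Upsilon_0(z-w)$ from \eqref{upsrel}, the left-hand side of \eqref{tobe} minus $1$ becomes
\[
\lpar 1+\frac{\bfun_1}{m\bfun_0}\rpar\Theta_0-1=(z-w)\Upsilon_0+\frac{\bfun_1\Theta_0}{m\bfun_0}.
\]
On the other hand, for any smooth $A_m=\Upsilon_0+m^{-1}C$ one computes from \eqref{nabla} that
\[
\frac{1}{m}\Dop_m A_m=(z-w)\Upsilon_0+\frac{1}{m}\bigl(\d_\theta\Upsilon_0+(z-w)C\bigr)+\frac{1}{m^2}\d_\theta C.
\]
Matching the $O(1)$ contributions is automatic, and matching the $O(m^{-1})$ contributions forces
\[
C(z,w,\theta)=\frac{1}{z-w}\lpar\frac{\bfun_1(z,\zeta)\,\Theta_0(z,w,\theta)}{\bfun_0(z,\zeta)}-\d_\theta\Upsilon_0(z,w,\theta)\rpar,
\]
with $\zeta=\zeta(z,w,\theta)$. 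We then set $B=-\d_\theta C$, which yields \eqref{tobe} exactly.

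The main obstacle is showing that $C$ is in fact holomorphic (hence bounded on $F(U)$), i.e.\ that the bracketed expression vanishes along $\{z=w\}$. This is where Lemma \ref{comp} comes in. Since $\Theta_0(z,z,\theta)=1$ by \eqref{d01}, the first summand restricted to $z=w$ equals $\bfun_1(z,\zeta)/\bfun_0(z,\zeta)$. For the second summand, the definition $\Upsilon_0(z,w,\theta)=-\int_0^1[\d_2\Theta_0](z,z+t(w-z),\theta)\diff t$ gives $\Upsilon_0(z,z,\theta)=-[\d_2\Theta_0](z,z,\theta)$, so
\[
[\d_\theta\Upsilon_0](z,z,\theta)=-[\d_2\d_3\Theta_0](z,z,\theta),
\]
which by \eqref{b1} of Lemma \ref{comp} equals $\bfun_1(z,\zeta)/\bfun_0(z,\zeta)$ at $\theta=\theta(z,z,\zeta)$. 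Hence the numerator vanishes on the diagonal, and the removable singularity theorem makes $C$ holomorphic on $F(U)$. Since all ingredients are uniformly bounded on the (relatively compact) image $F(U)$, so are $C$, $A_m=\Upsilon_0+m^{-1}C$, and $B=-\d_\theta C$.

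Finally, the limit \eqref{alim} is immediate from $A_m-\Upsilon_0=m^{-1}C$ and the boundedness of $C$, which forces $A_m\to\Upsilon_0$ uniformly on $F(U)$. In summary, the whole argument is a single algebraic identification once one spots that \eqref{upsrel} strips off the leading $(\Theta_0-1)$ piece, reduces everything to solving one division, and Lemma \ref{comp} is precisely the compatibility condition that makes that division produce a holomorphic result.
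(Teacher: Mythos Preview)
Your proof is correct and follows essentially the same route as the paper's. The paper introduces the auxiliary notation $\Theta_1=(\bfun_1/\bfun_0)\Theta_0$ and an intermediate function $\Upsilon_1$ with $(z-w)\Upsilon_1=\Theta_1+\d_\theta\d_w\Theta_0$, then sets $A_m=\Upsilon_0+m^{-1}(\Upsilon_1-\d_\theta\d_w\Upsilon_0)$ and verifies \eqref{tobe} directly; since differentiating $(z-w)\Upsilon_0=\Theta_0-1$ gives $\d_\theta\Upsilon_0=-\d_\theta\d_w\Theta_0+(z-w)\d_\theta\d_w\Upsilon_0$, your $C$ coincides with the paper's $\Upsilon_1-\d_\theta\d_w\Upsilon_0$, and both arguments invoke Lemma \ref{comp} in exactly the same way to clear the apparent pole at $z=w$.
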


\begin{proof} Put
\begin{equation*}\Theta_1(z,w,\theta)=\frac {\bfun_1(z,\zeta)}
{\bfun_0(z,\zeta)}\Theta_0(z,w,\theta),\end{equation*} so that the
left hand side in \eqref{tobe} can be written
\begin{equation}\label{obet}
\lpar 1+\frac {\bfun_1(z,\zeta)}
{m\bfun_0(z,\zeta)}\rpar\Theta_0(z,w,\theta)=
\Theta_0(z,w,\theta)+m^{-1}\Theta_1(z,w,\theta).\end{equation} Next,
recall that $\Theta_0(z,w,\theta)-1=(z-w)\Upsilon_0(z,w,\theta),$
cf. \eqref{upsrel}. Further, from the relation \eqref{b1}, which
reads
\begin{equation*}\frac {\bfun_1(z,\zeta)} {\bfun_0(z,\zeta)}=-\lpar \frac
{\d^2} {\d\theta\d w}\Theta_0(z,w,\theta)\rpar\biggm|_{z=w},\quad
\zeta=\zeta(z,z,\theta),\end{equation*} it follows that
\begin{equation}\begin{split}\label{1029}\Theta_1(z,z,\theta)&=-\lpar
\frac {\d^2}  {\d\theta\d w}
 \Theta_0(z,w,\theta)\rpar \biggm|_{z=w}\cdot \Theta_0(z,z,\theta)
= -\lpar \frac {\d^2} {\d\theta\d w} \Theta_0(z,w,\theta)\rpar \biggm|_{z=w},\\
\end{split}\end{equation}
where we have used that $\Theta_0(z,z,\theta)=1$ (see \eqref{d01})
to get the last equality. \eqref{1029} allows us to write
\begin{equation*}\Theta_1(z,w,\theta)+\d_\theta\d_w\Theta_0(z,w,\theta)=(z-w)\Upsilon_1(z,w,\theta),\end{equation*}
where $\Upsilon_1$ is holomorphic. Now define
\begin{equation*}A_m=\Upsilon_0+m^{-1}(\Upsilon_1-(\d_\theta\d_w \Upsilon_0))\quad, \quad
B=-\d_\theta(\Upsilon_1-(\d_\theta\d_w)\Upsilon_0).\end{equation*}
Then $A_m$ and $B$ are holomorphic and $A_m\to\Upsilon_0$ as
$m\to\infty$.

We will see that straightforward calculations show that
\begin{equation}\label{dopf}\Dop_m A_m=m\lpar \Theta_0-1\rpar+\Theta_1+m^{-1}\frac \d
{\d\theta}\lpar \frac {\Theta_1} {z-w}-\frac {\d_\theta \Theta_0}
{(z-w)^2}\rpar=m\lpar
\Theta_0-1\rpar+\Theta_1-m^{-1}B.\end{equation} Indeed, when $z\ne
w$ we have
\begin{equation*}\begin{split}A_m&=\frac {\Theta_0-1} {z-w}+m^{-1}\lpar \frac
{\Theta_1+\d_\theta\d_w\Theta_0} {z-w}-\d_w\lpar\frac
{\d_\theta\Theta_0} {z-w}\rpar\rpar=\\
&=\frac {\Theta_0-1} {z-w}+m^{-1}\lpar \frac {\Theta_1} {z-w}-\frac
{\d_\theta\Theta_0} {(z-w)^2}\rpar\\
\end{split}\end{equation*}
so that
\begin{equation*}\d_\theta A_m=\frac {\d_\theta \Theta_0}
{z-w}+m^{-1}\lpar \frac {\d_\theta\Theta_1} {z-w}-\frac
{\d_\theta^2\Theta_0} {(z-w)^2}\rpar,\end{equation*} and
\begin{equation*}m(z-w)A_m=m\lpar \Theta_0-1\rpar+\Theta_1-\frac
{\d_\theta\Theta_0} {z-w},\end{equation*} which gives \eqref{dopf},
since $\Dop_m A_m=\d_\theta A_m+m(z-w)A_m$.

Dividing through by $m$ in the \eqref{dopf} now gives
\begin{equation*}\Theta_0+m^{-1}\Theta_1=1+m^{-1}\Dop_m
A_m+m^{-2}B,\end{equation*}
and glancing at \eqref{obet} we obtain
\eqref{tobe}.
\end{proof}

\subsection{Decomposition of $\Iop_m$.} In view of \eqref{boss} and Lemma
\ref{tlem}, we can now write
\begin{equation}\begin{split}\label{recon}\Iop_{m} u(z)&=\int_{S} \frac {u(w)\chi(w)}
 {z-w}\lpar 1+\frac {\bfun_1(z,\bar{w})}
{m\bfun_0(z,\bar{w})}\rpar
\Theta_0(z,w,\bar{w})\dbar_w\lpar \e^{m\phi_z(w)(z-w)}\rpar \dA(w)=\\
&=\int_{S} \frac {u(w)\chi(w)} {z-w}
\dbar_w(\e^{m\phi_z(w)(z-w)})\dA(w)+\\
&+\frac 1  m\int_{S} \frac {u(w)\chi(w)} {z-w}[\Dop_m
A_m](z,w,\phi_z(w))\dbar_w(\e^{m\phi_z(w)(z-w)})\dA(w)+\\
&+\frac 1 {m^2}\int_{S} \frac
{u(w)\chi(w)} {z-w}B(z,w,\phi_z(w))\dbar_w(\e^{m\phi_z(w)(z-w)})\dA(w),\\
\end{split}\end{equation}
where $A_m$ and $B$ are uniformly bounded and holomorphic near
$(z_0,z_0,\phi_{z_0}(z_0))$.

We recognize the first integral in the right hand side of
\eqref{recon} as $\Iop_{m}^1 u(z)$ (see \eqref{grep}). Let us denote
the others by
\begin{equation*}\Iop_{m}^2 u(z)=\frac 1 m \int_{S} \frac {u(w)\chi(w)}
{z-w}\Dop_m
A_m(z,w,\phi_z(w))\dbar_w(\e^{m\phi_z(w)(z-w)})\dA(w),\end{equation*}
and
\begin{equation}\label{iih}
\Iop_{m}^3 u(z)=\frac 1 {m^2} \int_{S} \frac {u(w)\chi(w)}
{z-w}B(z,w,\phi_z(w))\dbar_w(\e^{m\phi_z(w)(z-w)})\dA(w).\end{equation}

\subsection{Estimates for $\Iop_m^2$.} We start by estimating
$\Iop_{m}^2 u(z)$ when $z\in\D(z_0;\eps)$, and note that
\eqref{impo} yields that
\begin{equation}\begin{split}\label{ffsp}\Iop_{m}^2 u(z)=&-\int_{\{w\in S;\,\babs{z-w}\ge\eps/2\}} u(w)\dbar \chi(w)
\e^{m\phi_z(w)(z-w)}A_m(z,w,\phi_z(w))\dA(w)+\\
&+\frac 1 {2\pi\imag}\int_{\d
S}u(w)\chi(w)\e^{m\phi_z(w)(z-w)}A_m(z,w,\phi_z(w))\diff
w.\\
\end{split}
\end{equation}
Let us now define
\begin{equation}\label{nudef}\nu_0(z,w)=\Upsilon_0(z,w,\phi_z(w)),\end{equation}
so that $A_m(z,w,\phi_z(w))\to\nu_0(z,w)$ with uniform convergence
for $z,w\in\D(z_0;2\eps)$ when $m\to\infty$, by Lemma \ref{tlem}.
Also let $C'=C'(\eps)$ be an upper bound for
$\{\babs{A_m(z,w,\phi_z(w))};\, z,w\in\D(z_0;2\eps),\, m\ge 1\}$. We
now use \eqref{ffsp} and the estimate \eqref{goodc} to get
%¤ I ekvationen nedan är "\delta" ersatt met "\delta\eps^2"
\begin{equation}\begin{split}\label{goo2}&\babs{\Iop_{m}^2 u(z)-\frac 1 {2\pi\imag}\int_{\d S}
u(w)\chi(w)\e^{m\phi_z(w)(z-w)}\nu_0(z,w)\diff w}\le\\
&\qquad\le C'\int_{\{w\in S;\,\babs{w-z}\ge \eps/2\}}
\babs{u(w)\dbar\chi(w)}\e^{m(Q(z)/2-Q(w)/2-\delta_0\babs{z-w}^2/2)}\dA(w)\le\\
&\qquad\le C'\e^{m(Q(z)/2-\delta_0\eps^2/8)}\int_\C
\babs{u(w)\dbar\chi(w)}\e^{-Q(w)/2}\dA(w)\le\\
&\qquad\le
C'\e^{m(Q(z)/2-\delta_0\eps^2/8)}\|u\|_{mQ}\|\dbar\chi\|_{L^2}\le
C_2\e^{m(Q(z)/2-\delta\eps^2)}\|u\|_{mQ},\\
\end{split}
\end{equation}
%¤ Nedan är alla förekomster av "\eps" borttagna.
where $\delta=\delta_0/8=\Delta Q(z_0)/16$, and
$C_2=3C'$.

\subsection{Estimates for $\Iop_m^3$.} To estimate
$\Iop_m^3 u(z)$, we note that \eqref{iih} implies
\begin{equation*}\Iop_m^3 u(z)=\frac 1 m \int_\C u(w)\chi(w)B(z,w,\phi_z(w))
\e^{m\phi_z(w)(z-w)}\dbar\phi_z(w)\dA(w).\end{equation*} It suffices
to estimate this integral using \eqref{goodc}. This gives
\begin{equation*}\babs{\Iop^3_{m} u(z)}\le \frac 1 m \int_\C
\babs{u(w)\chi(w)B(z,w,\phi_z(w))}\e^{m(Q(z)/2-Q(w)/2)-m\delta_0\babs{z-w}^2/2}\babs{\dbar
\phi_z(w)}\dA(w).\end{equation*} Since the function
$\babs{B(z,w,\phi_z(w))\dbar\phi_z(w)\chi(w)}$ can be estimated by a number
$C_3$ independent of $z$ and $w$ for $z\in\D(z_0;\eps)$ and $w\in
\D(z_0;2\eps)$, it gives, in view of the Cauchy--Schwarz inequality
%¤ Här hade jag glömt "\delta_0^{-1/2}" som måste vara med i högerledet!
\begin{equation}\label{goo3}\begin{split}\babs{\Iop^3_{m}u(z)}&\le
C_3m^{-1}\e^{mQ(z)/2}\int_\C\babs{u(w)\chi(w)}\e^{-mQ(w)/2}\dA(w)\le\\
&\le C_3m^{-1}\e^{mQ(z)/2}\|u\|_{mQ}\lpar\int_\C \e^{-m\delta_0\babs{z-w}^2}\dA(w)\rpar^{1/2}\le
C_3\delta_0^{-1/2}m^{-3/2}\e^{mQ(z)/2}\|u\|_{mQ},\\
\end{split}\end{equation} where
$C_3=C_3(\eps)=\sup\big\{\babs{B(z,w,\phi_z(w))\dbar\phi_z(w)\chi(w)};\,
z,w\in \D(z_0;2\eps)\big\}$.

\subsection{Conclusion of the proof of Theorem \ref{mlem}} By
\eqref{recon} and the estimates \eqref{goo1}, \eqref{goo2} and
\eqref{goo3}, (using that $\phi_z(w)(z-w)=\Qext(z,\bar{w})-Q(w)$)
%¤ I formeln nedan är "\eps" och "\delta_0" nu explicit angivna i högerledet:
\begin{equation*}\begin{split}&\babs{\Iop_{m} u(z)-u(z)-\frac 1
{2\pi\imag}\int_{\d S}u(w)\chi(w)\lpar \frac 1
{z-w}+\nu_0(z,w)\rpar\e^{m(\Qext(z,\bar{w})-Q(w))}\diff
w}\le\\
&\le \babs{\Iop_{m}^1 u(z)-u(z)-\frac 1 {2\pi\imag}\int_{\d
S}\frac{u(w)\chi(w)\e^{m\phi_z(w)(z-w)}}
 {z-w}\diff w}+\\
&\qquad+\babs{\Iop_{m}^2u(z)-\frac 1 {2\pi\imag}\int_{\d
S}u(w)\chi(w)\nu_0(z,w)\e^{m\phi_z(w)(z-w)}\diff w}+
\babs{\Iop_{m}^3u(z)}\le\\
&\le C_1\eps^{-1}\e^{m(Q(z)/2-\delta\eps^2)}\|u\|_{mQ}+C_2\e^{m(Q(z)/2-\delta\eps^2)}
\|u\|_{mQ}+C_3\delta_0^{-1}m^{-3/2}\e^{mQ(z)/2}\|u\|_{mQ},\quad z\in\D(z_0;\eps)\cap S^\circ.\\
\end{split}
\end{equation*}
It thus suffices to choose a number $C$ large enough that
\begin{equation}\label{itsp}C_1\e^{-\delta m}+C_2\e^{-\delta m}
+C_3m^{-3/2}
 \le Cm^{-3/2},\quad m\ge 1.\end{equation}
Now recall that the numbers $C_1,$ $C_2,$ and $C_3$ only depend on
$\eps$, where $\eps>0$ can be chosen independently of $z_0$ for
$z_0$ in the given compact subset $S$ of $\setX$. Moreover, we have
that $\delta=\Delta Q(z_0)\eps^2/16$, and $\Delta Q(z_0)$ is bounded
below by a positive number for $z_0\in S$. It follows that $C$ in
\eqref{itsp} can be chosen independently of $z_0\in S$, and the
proof is finished. \hfill $\qed$

\section{The proof of Theorem \ref{th3}}\label{proof}

\subsection{Preliminaries.}
In this section we prove Th. \ref{th3}. Our approach which follows
[\cite{BBS}, Sect. 3] is obtained by assembling the information from
Lemma \ref{lemm2}, Th. \ref{mlem}, and Cor. \ref{bh}. To facilitate
the presentation, we divide the proof into steps.

First recall that adding a constant to $Q$ means that $K_{m,n}$ is
only changed by a multiplicative constant, and hence we can (and
will) assume that $Q\ge 1$ on $\C$.

Fix a compact subset $K\Subset \setS_\tau^\circ\cap\setX$ and a
point $z_0\in \setS_\tau^\circ\cap \setX$, and let $\eps>0$ and
$\chi\in \coity(\C)$ be as in Th. \ref{mlem}.
Choosing $\eps>0$ somewhat smaller if necessary, we may ensure that
$2\eps<\dist(K,\C\setminus(\setS_\tau\cap\setX))$ and that
\begin{equation}\label{leed}\babs{K_m^1(z,w)}^2\e^{-m(Q(z)+Q(w))}\le
Cm^2\e^{-\delta_0 m\babs{z-w}^2},\quad z,\bar{w}\in
\D(z_0;2\eps),\quad z_0\in K,\end{equation} for some positive
numbers $C$ and $\delta_0$ (see \eqref{lead}). We also introduce the
set
\begin{equation*}\setSp=\{z\in\C;\, \dist(z,K)\le
2\eps\}\Subset \setS_\tau^\circ\cap\setX.\end{equation*} Our goal is
to prove an estimate
\begin{equation*}\babs{K_{m,n}(z,w)-K_m^1(z,w)}\e^{-m(Q(z)+Q(w))/2}\le
Cm^{-1},\quad z,w\in\D(z_0;\eps),\quad n\ge m\tau-M,\quad m\ge m_0,
\end{equation*}
where $M\ge 0$ is given, and $C$ and $m_0$ are independent of the specific
choice of the point $z_0\in K$.

In the following, we let $C$ denote various (more or less) absolute
constants which can change meaning from time to time, even within
the same chain of inequalities.

Let $\Pop_{m,n}:L^2_{mQ}\to \Hspace_{m,n}$ denote the orthogonal
projection, that is,
\begin{equation*}\Pop_{m,n}u(z)=
\int_\C u(w)K_{m,n}(z,w)\e^{-mQ(w)} \dA(w).\end{equation*}
Throughout the proof we \textit{fix} two points $z,w\in\D(z_0;\eps)$
and introduce the functions
\begin{equation}\label{func1}u_z(\zeta)=K_{m,n}(\zeta,z)\quad\text{and}\quad
v_w(\zeta)=\begin{cases} \chi(\zeta)K_m^1(\zeta,w),\quad& \zeta\in
\D(z_0;2\eps),\cr 0, &\text{otherwise},\cr
\end{cases}
\end{equation}
as well as
\begin{equation}\label{dwdef}
d_w(\zeta)=v_w(\zeta)-\Pop_{m,n}v_w(\zeta),\qquad
\zeta\in\C.\end{equation} We note that $v_w\in\calC_0^\infty(\C)$
with $\supp v_w\subset \setSp\Subset\setS_\tau^\circ\cap \setX.$

\begin{lem} \label{lemma2} There exists a positive number $C$ independent
of $m\ge 1$, $n$, $z$ and $w$, such that
\begin{equation}\label{app1}
\big|K_{m,n}(z,w)-\Pop_{m,n}v_w(z)\big|\le  Cm^{-1}
\e^{m(Q(z)+Q(w))/2}.\end{equation} Moreover, $C$ can be chosen
independent of $z_0$ for $z_0\in K$.
\end{lem}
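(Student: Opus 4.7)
The plan is to derive the lemma from the approximate Bergman projection formula of Theorem \ref{mlem} by a short duality computation exploiting the Hermitian symmetry of $K_m^1$.

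Fix $z,w\in\D(z_0;\eps)$. I would apply the estimate \eqref{ching} of Theorem \ref{mlem} (with the compact set $S=\setSp$, and the same $z_0$, $\eps$, $\chi$) to the polynomial $u=K_{m,n}(\cdot,z)\in\Hspace_{m,n}\subset A^2_{mQ}$, evaluated at the point $w\in\D(z_0;\eps)\cap \setSp^\circ$:
\begin{equation*}
\bigl|\,\Iop_{m}[K_{m,n}(\cdot,z)](w) - K_{m,n}(w,z)\,\bigr| \;\le\; C m^{-3/2}\,\e^{mQ(w)/2}\,\|K_{m,n}(\cdot,z)\|_{mQ}.
\end{equation*}
The crucial identification is that the left-hand integral coincides with $\overline{\Pop_{m,n}v_w(z)}$: using the reality of $\chi$ and the Hermitian symmetry $K_m^1(w,\zeta)=\overline{K_m^1(\zeta,w)}$ noted in part (1) of Theorem \ref{mlem},
\begin{equation*}
\Iop_{m}[K_{m,n}(\cdot,z)](w) = \int_{\setSp} K_{m,n}(\zeta,z)\,\overline{\chi(\zeta)K_m^1(\zeta,w)}\,\e^{-mQ(\zeta)}\,\dA(\zeta) = \overline{\Pop_{m,n}v_w(z)},
\end{equation*}
by the definitions of $v_w$ in \eqref{func1} and of $\Pop_{m,n}$ in \eqref{ooproj}.

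To conclude, I would control $\|K_{m,n}(\cdot,z)\|_{mQ}^2=K_{m,n}(z,z)$ using the one-point estimate \eqref{berg1} of Proposition \ref{lemma1}. Since $z\in\D(z_0;\eps)\subset \setSp\subset \setS_\tau^\circ$ we have $Q(z)=\widehat{Q}_\tau(z)$, so the estimate yields $K_{m,n}(z,z)\le Cm\,\e^{mQ(z)}$, hence $\|K_{m,n}(\cdot,z)\|_{mQ}\le C\sqrt{m}\,\e^{mQ(z)/2}$. Substituting into the inequality above, taking complex conjugates, and using $K_{m,n}(w,z)=\overline{K_{m,n}(z,w)}$, the factors $m^{-3/2}$ and $\sqrt{m}$ combine to give precisely the claimed bound $|\Pop_{m,n}v_w(z)-K_{m,n}(z,w)|\le Cm^{-1}\e^{m(Q(z)+Q(w))/2}$.

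The main point, beyond spotting the duality identification, is verifying that $C$ is independent of $z_0\in K$: this is built into the statements of Theorem \ref{mlem} and Proposition \ref{lemma1} once one observes that $\setSp=K_{2\eps}$ is a fixed compact subset of $\setS_\tau^\circ\cap\setX$ with $\D(z_0;2\eps)\subset\setSp^\circ$ for every $z_0\in K$, so that the boundary contribution in Theorem \ref{mlem} drops out uniformly in $z_0$. Independence from $n$ comes from the same source, since neither of the two invoked results has constants depending on $n$ within the relevant range.
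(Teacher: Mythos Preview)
Your proof is correct and follows essentially the same approach as the paper's: apply Theorem~\ref{mlem} (specifically \eqref{ching}, with $S=\setSp$) to $u=K_{m,n}(\cdot,z)$ at the point $w$, identify $\Iop_m u(w)=\overline{\Pop_{m,n}v_w(z)}$ via the Hermitian symmetry of $K_m^1$, and bound $\|K_{m,n}(\cdot,z)\|_{mQ}^2=K_{m,n}(z,z)$ by $Cm\,\e^{mQ(z)}$ using Proposition~\ref{lemma1}. Your discussion of uniformity in $z_0\in K$ is slightly more explicit than the paper's, but the argument is the same.
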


\begin{proof}
By Th. \ref{mlem}, there is a number $C$ depending only on $\eps$
and $K$, such that
\begin{equation*}\babs{\Iop_{m} u_z(w)-u_z(w)}
\le Cm^{-3/2}\e^{mQ(w)/2}\|u_z\|_{mQ},
\end{equation*}
where $\Iop_{m}$ is given by \eqref{izm} with $S=\Sigma$. Next note
that the estimate \eqref{berg1} implies
\begin{equation*}\|u_{z}\|_{mQ}^2=K_{m,n}(z,z)\le
Cm\e^{mQ(z)},
\end{equation*}
with a number $C$ depending only on $\tau$. We conclude that
\begin{equation}\label{fest}\babs{\Iop_{m} u_z(w)-K_{m,n}(w,z)}\le
Cm^{-1}\e^{m(Q(z)+Q(w))/2}.\end{equation}  We next note that
\begin{equation}\label{ik1}\overline{\Iop_{m} u_z(w)}=
\int_\C
\chi(\zeta)K_m^1(\zeta,w)K_{m,n}(z,\zeta)\e^{-mQ(\zeta)}\dA(\zeta)
=\Pop_{m,n}v_w(z),\end{equation} with $v_w$ given by \eqref{func1}.
Thus the statement \eqref{app1} is immediate from \eqref{ik1} and
\eqref{fest}.
\end{proof}

\begin{lem} \label{lemma3}  Let two positive numbers $\apar$ and $\bpar$ be given
such that \eqref{sss} is fulfilled, and put
$m_0=\max\{(1+\apar)/\tau,2\apar\}$. Then for all $m,n$ such that $n\ge
](m-\apar)\tau+\bpar[$ and $m\ge m_0$, there exist positive numbers
$\delta$ and $C$, independent of $z_0$, $m$, $n$, $z$, and $w$, such
that
\begin{equation}\label{sest}
\babs{d_w(z)}^2\le Cm^3\e^{-m\delta}\e^{m(Q(z)+Q(w))}.
\end{equation}
\end{lem}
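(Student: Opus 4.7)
The plan is to recognize $d_w$ as the $L^2_{mQ,n}$-minimal solution of the $\dbar$-equation $\dbar u=\dbar v_w$, apply Corollary~\ref{bh} to bound $\|d_w\|_{mQ}$, and then pass to a pointwise bound at $z$ via a subharmonicity argument, exploiting the fact that $d_w$ is actually holomorphic on a neighbourhood of $z$.

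For the first step, observe that $K_m^1(\cdot,w)$ is holomorphic in its first slot, so $\dbar v_w=(\dbar\chi)\,K_m^1(\cdot,w)$, a $\coity(\C)$ function supported in $\supp\dbar\chi\subset\overline{\D(z_0;2\eps)}\setminus\D(z_0;3\eps/2)$. By the choice of $\eps$, this support is contained in a compact set $\setSp\subset\setS_\tau\cap\setX$, and hence $a:=\essinf_{\setSp}\Delta Q>0$. Since $v_w$ has compact support and $\Pop_{m,n}v_w\in\Hspace_{m,n}$, we have $d_w\in L^2_{mQ,n}$, and by construction $d_w\perp\Hspace_{m,n}$; thus $d_w$ is the $L^2_{mQ,n}$-minimal solution of $\dbar u=\dbar v_w$. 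Corollary~\ref{bh} then yields
\begin{equation*}
\|d_w\|_{mQ}^2\le\frac{2e^{\apar q_\tau}}{am+\bpar c_\tau}\|\dbar v_w\|_{mQ}^2\le\frac{C}{m}\|\dbar v_w\|_{mQ}^2,
\end{equation*}
provided $m\ge m_0$ and $n\ge\,](m-\apar)\tau+\bpar[$.

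Next, I would estimate $\|\dbar v_w\|_{mQ}^2$ using the Gaussian decay \eqref{leed}. For $\zeta\in\supp\dbar\chi$ and $w\in\D(z_0;\eps)$, the triangle inequality gives $|\zeta-w|\ge\eps/2$, so
\begin{equation*}
|K_m^1(\zeta,w)|^2\,e^{-mQ(\zeta)}\le Cm^2 e^{mQ(w)}e^{-\delta_0 m\eps^2/4}.
\end{equation*}
Together with $\|\dbar\chi\|_{L^2}\le 3$, this gives $\|\dbar v_w\|_{mQ}^2\le Cm^2 e^{mQ(w)}e^{-m\delta_1}$ with $\delta_1:=\delta_0\eps^2/4>0$, and hence $\|d_w\|_{mQ}^2\le Cm\, e^{mQ(w)}e^{-m\delta_1}$.

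Finally, since $\chi\equiv 1$ on $\D(z_0;3\eps/2)$, $\dbar d_w=\dbar v_w=0$ there, so $d_w$ is holomorphic on $\D(z_0;3\eps/2)$. For $m$ large enough that $1/\sqrt{m}<\eps/2$, Lemma~\ref{lemm2} (applied with $\delta=1$) to $d_w$ on $\D(z;1/\sqrt{m})\subset\D(z_0;3\eps/2)$ yields
\begin{equation*}
|d_w(z)|^2 e^{-mQ(z)}\le Cm\,\|d_w\|_{mQ}^2\le Cm^2 e^{mQ(w)}e^{-m\delta_1},
\end{equation*}
i.e.\ $|d_w(z)|^2\le Cm^2 e^{m(Q(z)+Q(w))}e^{-m\delta_1}$, which is slightly stronger than the claimed bound. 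The main obstacle is the bookkeeping required to confirm that the numbers $\eps$, $\delta_0$, $a$, and the other constants can all be chosen uniformly for $z_0\in K$; this follows from the compactness of $K$ and continuity of $\Delta Q$.
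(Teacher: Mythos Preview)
Your proof is correct and follows essentially the same route as the paper's: identify $d_w$ as the $L^2_{mQ,n}$-minimal solution of $\dbar u=\dbar v_w$, apply Corollary~\ref{bh}, estimate $\|\dbar v_w\|_{mQ}^2$ via \eqref{leed} and the separation $|\zeta-w|\ge\eps/2$ on $\supp\dbar\chi$, and then upgrade to a pointwise bound using Lemma~\ref{lemm2} and the holomorphy of $d_w$ on $\D(z_0;3\eps/2)$. Your bookkeeping is slightly sharper (you retain the factor $1/m$ from Corollary~\ref{bh}, yielding $m^2$ rather than $m^3$), and your caveat that $1/\sqrt{m}<\eps/2$ is needed for the subharmonicity step with $\delta=1$ can be avoided by taking $\delta=\eps/2$ in Lemma~\ref{lemm2}, which works for all $m\ge1$.
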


\begin{proof} The function $d_w$ is the
$L^2_{mQ,n}$-minimal solution to the $\dbar$-equation $\dbar
d_w=\dbar v_w$. Since $\supp v_w\subset \setS_\tau$, Cor. \ref{bh}
yields that
\begin{equation}\label{dbest}
\|d_w\|_{mQ}^2\le C\|\dbar v_w\|_{mQ}^2,\qquad m\ge m_0,\quad n\ge
](m-\apar)\tau+\bpar[,\end{equation} with a number $C$ depending
only on $\tau$, $\apar$ and $\bpar$. But $\dbar v_w(\zeta)=\dbar
\chi(\zeta)K_m^1(\zeta,w)$, whence the estimate \eqref{leed} shows
that there are numbers $C$ and $\delta_0>0$ such that
\begin{equation}\label{test}|\dbar v_w(\zeta)|^2
\e^{-m(Q(\zeta)+Q(w))}\le
Cm^2|\dbar\chi(\zeta)|^2\e^{-m\delta_0\babs{\zeta-w}^2},\qquad
\zeta\in\C.
\end{equation} Now, since $\babs{\zeta-w}\ge \eps/2$ whenever
$\dbar\chi(\zeta)\ne 0$, we deduce from \eqref{test} that
\begin{equation}\label{quest}|\dbar v_w(\zeta)|^2\e^{-mQ(\zeta)}\le
Cm^2|\dbar\chi(\zeta)|^2\e^{m(Q(w)-\delta)},\quad \zeta
\in\C,\end{equation} where $\delta=\delta_0\eps^2/4$. Using
\eqref{dbest} and then integrating the inequality \eqref{quest} with
respect to $\dA(\zeta)$, we get
\begin{equation}\label{cest}\|d_w\|_{mQ}^2\le
C\|\dbar v_w\|_{mQ}^2 \le
Cm^2\e^{m(Q(w)-\delta)}\|\dbar\chi\|_{L^2}^2\le
Cm^2\e^{m(Q(w)-\delta)}.
\end{equation}
Next note that the function $d_w$ is holomorphic in the disk
$\D(z_0;3\eps/2)$, so that Lemma \ref{lemm2} gives
\begin{equation}\label{desto}
|d_w(z)|^2\e^{-mQ(z)}\le
Cm\int_{\D(z;\eps/\sqrt{m})}\babs{d_w(\zeta)}^2\e^{-mQ(\zeta)}\dA(\zeta)\le
Cm \|d_w\|^2_{mQ},
\end{equation}
where $C$ only depends on $\eps$ and $\tau$. Combining \eqref{desto}
with \eqref{cest}, we end up with \eqref{sest}, and so we are done.
\end{proof}

\subsection{Conclusion of the proof of Theorem \ref{th3}.} Since $\chi(z)=1$,
we have that $v_w(z)=K_m^1(z,w)$, whence by \eqref{dwdef}
\begin{equation*}\babs{K_{m,n}(z,w)-K_m^1(z,w)}=
\babs{K_{m,n}(z,w)-v_w(z)}
\le\babs{K_{m,n}(z,w)-\Pop_{m,n}v_w(z)}+\babs{d_w(z)}.
\end{equation*}
In view of Lemmas \ref{lemma2} and \ref{lemma3}, the right hand side
can be estimated by
\begin{equation*}C\lpar m^{-1}+m^{3/2}\e^{-m\delta/2}\rpar
\e^{m(Q(z)+Q(w))/2},\end{equation*} whenever
$n\ge](m\tau-\apar)+\bpar[$ and $m\ge m_0$. For $m\ge 1$, the latter
expression is dominated by $Cm^{-1}\e^{m(Q(z)+Q(w))/2}$. The proof
is finished. \hfill $\qed$

\section{Berezin quantization and Gaussian convergence} \label{p2}

\subsection{Preliminaries.} In this section we use the
expansion formula for $K_{m,n}$ (Th. \ref{th3}) to prove theorems
\ref{th1} and \ref{th2}. In the proofs, we set $\tau=1$ and $m=n$.
(The argument in the general case follows the same pattern.) It then
becomes natural to write $K_n$ for $K_{n,n}$ etc. We also fix a
compact subset $K\Subset \setS_1^\circ\cap\setX$, a point $z_0\in
K$, and a positive number $\eps$ with the properties listed in Th.
\ref{th3}, and we put
\begin{equation*}\delta_n=\eps\log n/\sqrt{n}.\end{equation*}

\subsection{The proof of Theorem \ref{th1}} It suffices to show that
\begin{equation}\label{firstest}B_{n}^{\langle
z_0\rangle}(\D(z_0;\delta_n))\to 1\quad \text{as}\quad
n\to \infty.\end{equation} Indeed, since $B_{n}^{\langle
z_0\rangle}$ is a p.m., this implies Th. \ref{th1}.

In order to prove \eqref{firstest}, we apply Th. \ref{th3}, which
gives
\begin{equation*}K_{n}(z_0,z)\e^{-n(Q(z)+Q(z_0))/2}=
\lpar
n\bfun_0(z_0,\bar{z})+\bfun_1(z_0,\bar{z})\rpar\e^{n(\re\Qext(z_0,\bar{z})-(Q(z)+Q(z_0))/2)}+
\calO(n^{-1}),\quad z\in\D(z_0;\eps)\end{equation*} when
$n\to\infty$, where the $\calO$ is uniform for $z_0\in K$. In view
of \eqref{bbs}, we have
\begin{equation}\label{2ndest}K_{n}(z_0,z)\e^{-n(Q(z)+Q(z_0))/2}=
\lpar n\bfun_0(z_0,\bar{z})+\calO(1)\rpar \e^{n(-\Delta
Q(z_0)/2+R(z_0,z))}+ \calO(n^{-1}),\quad
z\in\D(z_0;\eps)\end{equation} with a function $R$ satisfying
$\babs{R(z,w)}\le C\babs{z-w}^3$ whenever $z,w\in \D(z_0;\eps)$ and
$z_0\in K$. Note that
\begin{equation}\label{mroh}n\babs{R(z_0,z)}\le Cn\delta_n^3\le C\log^3
n/\sqrt{n},\qquad \text{when}\quad z\in\D(z_0;\delta_n),\quad z_0\in
K.\end{equation} Introducing this estimate in \eqref{2ndest} gives
\begin{equation*}\babs{K_{n}(z_0,z)}^2\e^{-n(Q(z)+Q(z_0))}=
\lpar n^2\babs{b_0(z_0,\bar{z})}^2+\calO(n)\rpar\e^{-n\Delta
Q(z_0)\babs{z_0-z}^2+\calO(\log^3 n/\sqrt{n})}+\calO(n^{-2}),
\end{equation*}
for $z\in\D(z_0;\delta_n)$ as
$n\to\infty$, where the $\calO$-terms are uniform for $z_0\in K$.
Furthermore, \eqref{ber} yields that
\begin{equation}\label{3dest}\frac {\babs{K_{n}(z_0,z)}^2}
{K_{n}(z_0,z_0)}\e^{-nQ(z)}=\frac {\babs{K_{n}(z_0,z)}^2} {n\Delta
Q(z_0)+\calO(1)}\e^{-n(Q(z_0)+Q(z))},\quad
z\in\D(z_0;\eps),\end{equation} as
$n\to\infty$. Note that the left hand side in \eqref{3dest} is the
density $\berd_{n}^{\langle z_0\rangle}(z)$, so that \eqref{2ndest}
and \eqref{3dest} implies
\begin{equation}\label{4thest}\berd_{n}^{\langle z_0\rangle}(z)=\frac
{n^2\babs{\bfun_0(z_0,\bar{z})}^2+\calO(n)} {n\Delta
Q(z_0)+\calO(1)}\e^{-n\Delta
Q(z_0)\babs{z-z_0}^2+\calO(\log^3n/\sqrt{n})} +\calO(n^{-2}),\quad
z\in \D(z_0;\delta_n).\end{equation} Integrating \eqref{4thest} with
respect to $\dA$ over $\D(z_0;\delta_n)$ and using the mean-value
theorem for integrals now gives that there are positive numbers
$v_{n,z_0}$ converging to $1$ (uniformly for $z_0\in K$), and also
complex numbers $b_{n,z_0}$ converging to
$\bfun_0(z_0,\bar{z}_0)=\Delta Q(z_0)$ (uniformly for $z_0\in K$) as
$n\to\infty$, such that
\begin{equation*}\begin{split}B_{n}^{\langle z_0\rangle}(\D(z_0;\delta_n))&=
v_{n,z_0}\frac {n^2\babs{b_{n,z_0}}^2+\calO(n)} {n\Delta
Q(z_0)+\calO(1)}\int_{\D(z_0;\delta_n)} \e^{-n\Delta
Q(z_0)\babs{z-z_0}^2}\dA(z)+\calO(n^{-2})\int_{\D(z_0;\delta_n)}\dA(z)=\\
&=v_{n,z_0}\frac {n^2\babs{b_{n,z_0}}^2+\calO(n)} {n^2\Delta
Q(z_0)^2+\calO(n)}\lpar 1-\e^{-\Delta Q(z_0)\eps^2\log^2
n}\rpar+\calO(n^{-2}).\\
\end{split}
\end{equation*}
The expression in the right hand side converges to $1$ as
$n\to\infty$. This proves \eqref{firstest}, and Th. \ref{th1}
follows. \hfill$\qed$

\subsection{The proof of Theorem \ref{th2}}It suffices to to show that
there are numbers $\eps_n$ converging to zero as $n\to\infty$ such
that
\begin{equation}\label{5thest}\int_\C\babs{\berd_{n}^{\langle
z_0\rangle}(z)-n\Delta Q(z_0)\e^{-n\Delta
Q(z_0)\babs{z-z_0}^2}}\dA(z)\le \eps_n,\qquad z_0\in
K.\end{equation} Indeed, \eqref{gc1} follows form \eqref{5thest}
after the change of variables $z\mapsto z_0+z/\sqrt{n\Delta Q(z_0)}$
in the integral in \eqref{5thest}.

To prove \eqref{5thest}, we split the integral with respect to the
decomposition $\{\babs{z-z_0}<\delta _n\}\cup\{\babs{z-z_0}\ge
\delta_n\}$, and put
\begin{equation*}A_{n,z_0}=
\int_{\{z;\babs{z-z_0}<\delta_n\}}\babs{\berd_{n}^{\langle
z_0\rangle}(z)-n\Delta Q(z_0)\e^{-n\Delta
Q(z_0)\babs{z-z_0}^2}}\dA(z),\end{equation*}
\begin{equation*}B_{n,z_0}=
\int_{\{z;\babs{z-z_0}\ge \delta_n\}}\babs{\berd_{n}^{\langle
z_0\rangle}(z)-n\Delta Q(z_0)\e^{-n\Delta
Q(z_0)\babs{z-z_0}^2}}\dA(z).\end{equation*} Considering $A_{n,z_0}$
first, we get from \eqref{5thest} that
\begin{equation*}A_{n,z_0}=
\int_{\D(z_0;\delta_n)}\babs{\e^{nR(z_0,\bar{z})}\frac
{n^2\babs{\bfun_0(z_0,\bar{z})}^2+\calO(n)} {n\Delta
Q(z_0)+\calO(1)}-n\Delta Q(z_0)}\e^{-n\Delta
Q(z_0)\babs{z-z_0}^2}\dA(z)+\calO(n^{-2}),\end{equation*} as
$n\to\infty$. Next we put
\begin{equation*}s_{n,z_0}=\sup_{z\in\D(z_0;\delta_n)}\bigg\{\babs{\e^{nR(z_0,\bar{z})}\frac
{n^2\babs{\bfun_0(z_0,\bar{z})}^2+\calO(n)} {n\Delta
Q(z_0)+\calO(1)}-n\Delta Q(z_0)}\bigg\},\end{equation*} and observe
\eqref{mroh} implies that $s_{n,z_0}/n\to 0$, uniformly for $z_0\in
K$. It yields that
\begin{equation*}A_{n,z_0}\le s_{n,z_0}\int_{\D(z_0;\delta_n)}\e^{-n\Delta
Q(z_0)\babs{z-z_0}^2}\dA(z)+\calO(n^{-2})\le
Cs_{n,z_0}/n,\end{equation*} which converges to $0$ as $n\to \infty$
uniformly for $z_0\in K$.

To estimate $B_{n,z_0}$ we simply observe that
\begin{equation}\label{bmest}B_{n,z_0}\le
\int_{\{z;\babs{z-z_0}\ge\delta_n\}}\berd_{n}^{\langle
z_0\rangle}(z)\dA(z)+\int_{\{z;\babs{z-z_0}\ge\delta_n\}}n\Delta
Q(z_0)\e^{-n\Delta Q(z_0)\babs{z-z_0}^2}\dA(z).
\end{equation}
Since $B_{n}^{\langle z_0\rangle}$ is a p.m., the estimate
\eqref{firstest} yields that the first integral in the right hand
side of \eqref{bmest} converges to $0$ as $n\to\infty$. Moreover, a
simple calculation yields that the second integral in \eqref{bmest}
converges to $0$ when $n\to\infty$. We have shown that $B_{n,z_0}\to
0$ as $n\to \infty$ with uniform convergence for $z_0\in K$. The
proof is finished. \hfill$\qed$

\section{Off-diagonal damping}\label{point}

\subsection{An estimate for $K_{m,n}$.} In this section we prove
Th. \ref{th1.5}. We shall obtain that theorem from Th. \ref{flock}
below, which is of independent interest, and has applications in
random matrix theory, see \cite{AHM}. Our analysis depends on the
following lemma. It will be convenient to define the set
\begin{equation*}\setS_{\tau,1}=\{\zeta;\,
\dist(\zeta,\setS_\tau)\le 1\}.\end{equation*}

\begin{lem}\label{propn1} Assume that $Q\in\calC^2(\C)$.
Let $z_0\in\setS_\tau\cap\setX$ and let $M$ be given non-negative
numbers. Put
\begin{equation*}8d=\dist(z_0,\C\setminus(\setS_\tau\cap\setX))\qquad
a=\inf\{\Delta Q(\zeta);\, \zeta\in\D(z_0;6d)\}\qquad A=\sup\{\Delta
Q(\zeta);\, \zeta\in\setS_{\tau,1}\}.\end{equation*} There then
exists positive numbers $C$ and $\epsilon$ such that for all $m,n\ge
1$ with $m\tau-M\le n\le m\tau+1$, we have
\begin{equation}\label{bupp}
\babs{K_{m,n}(z_0,z)}^2\e^{-m(Q(z_0)+Q(z))}\le C
m^2\e^{-\epsilon\sqrt{m}\min\{d,\babs{z_0-z}\}} ,\qquad z\in
\setS_\tau.
\end{equation}
Here $C$ depends only on $M$, $a$, $A$, and $\tau$, while $\epsilon$
only depends on $a$, $\tau$ and $M$.
\end{lem}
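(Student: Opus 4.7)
The plan is to represent $K_{m,n}(\cdot, z_0)$ as $\Pop_{m,n} v + g$, where $v$ is a localized test function concentrated at $z_0$ and $g \in \Hspace_{m,n}$ is a small error, and then to extract off-diagonal $\sqrt{m}$-scale decay of $\Pop_{m,n} v = v - u_*$ from a weighted $L^2$ estimate for the associated $\dbar$-problem via Theorem \ref{boh}. Choose $\eps > 0$ small enough, depending only on the local geometry, that $\D(z_0; 2\eps) \subset \D(z_0; 6d) \subset \setS_\tau^\circ \cap \setX$ and such that Theorem \ref{mlem} applies uniformly for $z_0$ with $\dist(z_0, \C \setminus (\setS_\tau \cap \setX)) \ge 8d$. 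Let $\chi \in \coity(\C)$ be a cutoff with $\chi = 1$ on $\D(z_0; 3\eps/2)$, $\chi = 0$ outside $\D(z_0; 2\eps)$, $\|\dbar\chi\|_{L^2} \le 3$. Put $v(w) = \chi(w) K_m^1(w, z_0)$ and let $u_*$ be the $L^2_{mQ,n}$-minimal solution to $\dbar u = \dbar v$, so that $\Pop_{m,n} v = v - u_*$. Using the Hermitian symmetry of $K_m^1$ to write $\langle v, h \rangle_{mQ} = \overline{\Iop_m h(z_0)}$ for $h \in \Hspace_{m,n}$, estimate \eqref{ching} of Theorem \ref{mlem} yields $\|g\|_{mQ} \le Cm^{-3/2} e^{mQ(z_0)/2}$ for $g = K_{m,n}(\cdot, z_0) - \Pop_{m,n} v$; Lemma \ref{lemm4} then gives $|g(z)|^2 e^{-m(Q(z) + Q(z_0))} \le Cm^{-2}$ on $\setS_\tau$, which is negligible against the target.

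\textbf{Weighted $L^2$ bound.} Introduce
\[
\Tfun_m(z) = \epsilon \sqrt{m} \, \psi(|z - z_0|),
\]
where $\psi \colon [0, \infty) \to [0, d]$ is a fixed $\calC^{1,1}$ function with $\psi(0) = \psi'(0) = 0$, $\psi' \in [0, 1]$, $\psi(r) = r$ on a middle range $[\eps_1, d/2]$, $\psi(r) \equiv d$ for $r \ge d$, and $\Delta \psi(|z - z_0|) \ge -C_0$ distributionally. Since $d \le \dist(z_0, \C \setminus \setS_\tau)/8$, $\Tfun_m$ is constant outside $\D(z_0; d) \supset \C \setminus \setS_\tau$, hence locally constant on $\C \setminus \setS_\tau$. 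For $z \in \setS_\tau$ one has $\Delta \wh\phi_{m,\apar,\bpar} + \Delta \Tfun_m \ge (m - \apar) a - C_0 \epsilon \sqrt{m}$, while $|\dbar \Tfun_m|^2 \le \epsilon^2 m/4$; for $m$ large and $\epsilon$ small enough (depending only on $a$, $\apar$, $q_\tau$), condition \eqref{kappa} of Theorem \ref{strb} is satisfied with $\kappa < 1$. Since $\dbar v$ is supported in the annulus $3\eps/2 \le |w - z_0| \le 2\eps$ where $|K_m^1(w, z_0)|^2 e^{-m(Q(w) + Q(z_0))} \le Cm^2 e^{-\delta_0 m (3\eps/2)^2}$ by \eqref{bbs}, Theorem \ref{boh} yields
\[
\int_\C |u_*|^2 e^{\Tfun_m - mQ} \, dA \le Cm \, e^{-\delta m + mQ(z_0)}
\]
for some $\delta = \delta(a, \tau, M) > 0$.

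\textbf{Pointwise estimate.} For $z \in \setS_\tau$ with $|z - z_0| \ge 2\eps$, the function $u_*$ is holomorphic on $\D(z; 1/\sqrt{m})$ since $v$ vanishes there. Applying Lemma \ref{lemm2} with its $\delta$-parameter equal to $1$, and exploiting that $\Tfun_m$ is Lipschitz with constant $\epsilon \sqrt{m}$ (so $\Tfun_m$ varies by at most $\epsilon$ on $\D(z; 1/\sqrt{m})$), we obtain
\[
|u_*(z)|^2 e^{-mQ(z)} \le C m e^{A + \epsilon} \, e^{-\Tfun_m(z)} \int_\C |u_*|^2 e^{\Tfun_m - mQ} \, dA \le C m^2 e^{-\delta m} e^{-\Tfun_m(z) + mQ(z_0)}.
\]
Since $\Tfun_m(z) \ge c_0 \epsilon \sqrt{m} \min\{d, |z - z_0|\}$ and $m^2 e^{-\delta m}$ is bounded by a constant, this gives the claimed bound on $|u_*(z)|^2 e^{-m(Q(z) + Q(z_0))}$ after relabeling $\epsilon$. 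For $z \in \setS_\tau$ with $|z - z_0| < 2\eps$, the universal diagonal bound $|K_{m,n}(z, z_0)|^2 e^{-m(Q(z) + Q(z_0))} \le Cm^2$ of Proposition \ref{lemma1} suffices, since there $e^{-\epsilon \sqrt{m} \min\{d, |z - z_0|\}}$ is bounded below by a positive constant once $\epsilon \eps$ is small; the ranges are then patched.

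\textbf{Main obstacle.} The most delicate step is constructing $\Tfun_m \in \calC^{1,1}(\C)$ that is locally constant outside $\setS_\tau$ while keeping the ratio $|\dbar \Tfun_m|^2 / \Delta(\wh\phi_{m,\apar,\bpar} + \Tfun_m)$ below the threshold $\kappa^2/e^{\apar q_\tau}$ uniformly in the parameters. The plateau at $|z - z_0| = d$ forces $\psi'' < 0$ locally, and this negative contribution to $\Delta \Tfun_m$ must be absorbed by the positive $(m - \apar) a$ coming from $\Delta \wh\phi_{m,\apar,\bpar}$; the corner at $z = z_0$ likewise must be smoothed so that $\Delta \Tfun_m$ does not develop a negative singular part. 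Tracking these dependencies to ensure that $\epsilon$ depends only on $a, \tau, M$ and $C$ only on $M, a, A, \tau$ requires careful bookkeeping through the parameters $\apar$, $\bpar$, $q_\tau$, $\kappa$ and the constants appearing in \eqref{bbs} and Theorem \ref{boh}.
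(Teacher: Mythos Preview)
Your approach has a structural gap: it relies on Theorem~\ref{mlem} and the approximating kernel $K_m^1$, both of which are only defined and proved in the paper under the hypothesis that $Q$ is \emph{real-analytic}. Lemma~\ref{propn1}, however, is stated for $Q\in\calC^2(\C)$, and the paper's own proof of this lemma is deliberately independent of the asymptotic expansion machinery precisely so that it applies at this lower regularity. So as written, your argument does not prove the stated result.

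There is a second, more local gap in the patching step. For $|z-z_0|<2\eps$ with $\eps>0$ \emph{fixed}, the target bound is $Cm^2\e^{-\epsilon\sqrt{m}\min\{d,|z-z_0|\}}$, which at $|z-z_0|\sim\eps$ is of order $m^2\e^{-\epsilon\eps\sqrt{m}}\to 0$. The universal bound $Cm^2$ from Proposition~\ref{lemma1} does not dominate this; the exponential factor is \emph{not} bounded below uniformly in $m$ once $\eps$ is fixed. The paper handles this by splitting instead at the $m$-dependent scale $|z-z_0|\le 8/\sqrt{m}$ (Case~3), where the exponential factor genuinely is bounded below.

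For comparison, the paper's argument avoids $K_m^1$ entirely. It uses a duality: the quantity $\int\chi_0|K_{m,n}(\cdot,z_0)|^2\e^{-mQ}$ equals $\sup_u|\Pop_{m,n}[\chi_0 u](z_0)|^2$ over unit vectors in a weighted space, where $\chi_0$ is a cutoff that \emph{vanishes} near $z_0$ (the opposite of your $\chi$). Then $\Pop_{m,n}[\chi_0 u](z_0)=-u_*(z_0)$, where $u_*$ is the $L^2_{mQ,n}$-minimal solution of $\dbar u_*=u\,\dbar\chi_0$, and one applies Theorem~\ref{boh} with a weight $\varrho_m$ that is \emph{zero} near $z_0$ and \emph{negative} on the support of $\dbar\chi_0$. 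The exponential gain comes from the weight on the data side, and the pointwise estimate at $z_0$ follows from subharmonicity on the small disk where $\varrho_m=0$. This route uses only Lemma~\ref{lemm3}, Proposition~\ref{lemma1}, and Theorem~\ref{boh}, all of which hold for $Q\in\calC^{1,1}$ or $\calC^2$.
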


\begin{rem} For fixed
$\tau$ and $M$, our method of proof gives that $\epsilon$ can be
chosen proportional to $a$ while $C$ can be chosen proportional to
$a^{-1}$. Indeed, our proof shows that if there is a positive number
$c$ depending only on $\tau$ and $M$ such that, with $a^\prime=\min\{a,1\}$,
\begin{equation*}\babs{K_{m,n}(z_0,z)}^2\e^{-m(Q(z_0)+Q(z))}\le
C\frac {m^3} {am+c}\e^{-\epsilon'a'\sqrt{m}\min\{d,\babs{z_0-z}\}}
,\qquad z\in \setS_\tau,
\end{equation*}
with $C$ and $\epsilon'$ independent of $a$. This estimate can
easily be extended to all $z\in\C$ by adapting the proof of Th.
\ref{flock} below.
\end{rem}

\noindent Before we turn to the proof of Lemma \ref{propn1}, we use
it to prove the main result of this section.

\begin{thm}\label{flock}
In the situation of Lemma \ref{propn1} we also have an estimate
\begin{equation}\label{brest}\babs{K_{m,n}(z_0,z)}^2\e^{-m(Q(z_0)+Q(z))}\le
Cm^2\e^{-\epsilon\sqrt{m}\min\{d,\babs{z_0-z}\}}\e^{-m
(Q(z)-\wh{Q}_\tau(z))},\qquad z\in\C,\end{equation} valid for all
$z_0\in \setS_\tau\cap X$, $m\ge 1$, and all $n$ with $m\tau-M\le
n\le m\tau+1$.
\end{thm}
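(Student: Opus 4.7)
The plan is to split according to whether $z \in \setS_\tau$ or not. For $z \in \setS_\tau$ we have $Q(z) = \wh{Q}_\tau(z)$, so the factor $\e^{-m(Q(z)-\wh{Q}_\tau(z))}$ equals one and the desired estimate reduces to Lemma \ref{propn1}. For $z \in \C \setminus \setS_\tau$, the hypothesis $\D(z_0; 8d) \subset \setS_\tau \cap \setX$ forces $|z - z_0| \ge 8d > d$, so $\min\{d, |z-z_0|\} = d$, and it remains to prove
\begin{equation*}
|K_{m,n}(z_0, z)|^2 \e^{-m\wh{Q}_\tau(z)} \le C m^2 \e^{m Q(z_0)} \e^{-\epsilon\sqrt{m} d}, \qquad z \in \C \setminus \setS_\tau.
\end{equation*}

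The key tool will be the maximum principle on the domain $\hat\C \setminus \setS_\tau$ in the Riemann sphere. Writing $v(z) = K_{m,n}(z, z_0)$, a polynomial in $z$ of degree at most $n - 1 \le m\tau$, I form
\begin{equation*}
F(z) = \log|v(z)|^2 - m \wh{Q}_\tau(z).
\end{equation*}
Since $\wh{Q}_\tau$ is harmonic on $\C \setminus \setS_\tau$ by Lemma \ref{drspock}, $F$ is subharmonic there. On $\d\setS_\tau$, every point sits at distance at least $8d$ from $z_0$, so Lemma \ref{propn1} combined with $Q = \wh{Q}_\tau$ on $\setS_\tau$ yields the boundary bound
\begin{equation*}
F(z) \le \log(Cm^2) + m Q(z_0) - \epsilon\sqrt{m} d, \qquad z \in \d\setS_\tau.
\end{equation*}

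To apply the global maximum principle on $\hat\C \setminus \setS_\tau$, I must check that $F$ extends subharmonically across $\infty$. Setting $w = 1/z$, the expansion $\wh{Q}_\tau(z) = \tau \log|z|^2 + O(1)$ from Lemma \ref{drspock} together with the polynomial bound $|v(z)|^2 = O(|z|^{2(n-1)})$ produces
\begin{equation*}
F(1/w) = 2(m\tau - n + 1)\log|w| + G(w),
\end{equation*}
where $G$ is subharmonic and bounded near $w = 0$. When $n - 1 < m\tau$, $F \to -\infty$ at $\infty$; when $n - 1 = m\tau$, $F$ has a finite limit and the removable-singularity theorem for subharmonic functions extends $F$ subharmonically. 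Either way the maximum principle on $\hat\C \setminus \setS_\tau$ bounds $\sup F$ by its sup on $\d\setS_\tau$, and exponentiating yields the required inequality.

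The main obstacle is the edge case $n - 1 = m\tau$, which occurs exactly when $m\tau$ is an integer and $n = m\tau + 1$. In that case $F$ does not decay at infinity, so a naive max principle on $\C \setminus \setS_\tau$ combined with a separate bound at infinity (for instance from Proposition \ref{lemma1}) only recovers the weaker conclusion, missing the $\e^{-\epsilon\sqrt{m}d}$ factor. The compactification to $\hat\C$ together with the subharmonic extension at $\infty$ is precisely what handles this edge case cleanly.
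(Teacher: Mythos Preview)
Your proposal is correct and follows essentially the same approach as the paper: reduce to $z\notin\setS_\tau$ via Lemma~\ref{propn1}, form the subharmonic function $F(\zeta)=\log|K_{m,n}(z_0,\zeta)|^2-m\wh{Q}_\tau(\zeta)$ on $\C\setminus\setS_\tau$, extend it subharmonically across $\infty$ (the paper does this by noting $F$ is harmonic and bounded above near $\infty$, hence of the form $h-c\log|\zeta|$ with $c\ge 0$; you do it by the explicit change of variable $w=1/z$), and apply the maximum principle on $\C^*\setminus\setS_\tau$ with the boundary bound coming from Lemma~\ref{propn1}. One tiny imprecision: your $G$ is only bounded \emph{above} (it may be $-\infty$ if $v(1/w)w^{n-1}$ vanishes at $w=0$), but that is all that is needed for the subharmonic extension, so the argument goes through unchanged.
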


\begin{proof} In view of Lemma \ref{propn1}, and since $Q=\wh{Q}_\tau$
on $\setS_\tau$, it suffices to show
the estimate \eqref{brest} when $z\not\in \setS_\tau$. To this end, we
consider the function
\begin{equation*}f(\zeta)=\log\babs{K_{m,n}(z_0,\zeta)}^2-m\wh{Q}_\tau(\zeta).
\end{equation*}
Since $\wh{Q}_\tau$ is harmonic on $\C\setminus\setS_\tau$, $f$ is
subharmonic there. Moreover, since $n-1\le m\tau$ and since
$K_{m,n}(\cdot,z_0)\in \Hspace_{m,n}$, we have a simple estimate
\begin{equation*}\log\babs{K_{m,n}(z_0,\zeta)}^2\le (n-1)\log\babs{\zeta}^2+\calO(1)\le m\tau\log\babs{\zeta}^2+
\calO(1) \qquad \text{when}\quad \zeta\to\infty,\end{equation*}
while the relation \eqref{qtau} says that $\wh{Q}_\tau(\zeta)=
\tau\log\babs{\zeta}^2+\calO(1)$ when $\zeta\to \infty$. Hence $f$
is bounded above. Furthermore, it is clear that $f$ is harmonic in a
punctured neighbourhood of $\infty$, which yields that $f$ has a
representation $f(\zeta)=h(\zeta)-c\log\babs{\zeta}$ for all large
enough $|\zeta|$, where $c$ is a non-negative number and $h$ is
harmonic at $\infty$ (see e.g. [\cite{ST}, Cor. 0.3.7, p. 12]). In
particular, $f$ extends to a subharmonic function on $\C^*\setminus
\setS_\tau$. Finally, since Lemma \ref{propn1} yields that
\begin{equation*}f(\zeta)\le \log(Cm^2)+mQ(z_0)-\epsilon\sqrt{m}d,\qquad
\text{when}\quad \zeta\in\d\setS_\tau,\end{equation*} the maximum
principle shows that the same estimate holds for all $\zeta \in
\C^*\setminus \setS_\tau$. This means that
\begin{equation}\babs{K_{m,n}(z_0,z)}^2\e^{-m\wh{Q}_\tau(z)}\le Cm^2
\e^{-\epsilon\sqrt{m}d}\e^{mQ(z_0)},\qquad \text{when}\quad z\not\in \setS_\tau,
\end{equation}
which implies \eqref{brest} when $z\not\in \setS_\tau$.
\end{proof}

\noindent\bf Background. \rm Estimates related to those considered
in Lemma \ref{propn1} are known, see e.g. Lindholm's article
\cite{L}, Prop. 9, pp. 404--407. We will follow the basic strategy
used in that paper in our proof below. Since we are considering a
different situation with polynomial Bergman kernels (instead of the
full Bergman kernel of $A^2_{mQ}$), and since we are not assuming
the weight $Q$ to be globally strictly subharmonic, nontrivial
modifications of the classical arguments are needed. Our main tool
for accomplishing this is provided by the weighted $L^2$ estimates
in Th. \ref{boh}.

\subsection{The proof of Lemma \ref{propn1}} We can and will
assume that $Q\ge 1$ on $\C$. Moreover, we will denote various
constants by the same letter $C$, which can change meaning during
the course of the calculations. When $C$ depends on one or several
parameters, we will always specify this. To simplify the proof we
will first reduce the problem by treating three simple cases.

\subsection{Case 1: $md^2\le 1$.} Since we have assumed that $n\le m\tau+1$,
the estimate \eqref{berg2} of Prop. \ref{lemma1} applies. It gives
that
\begin{equation}\label{spuc}\babs{K_{m,n}(z_0,z)}^2\e^{-m(Q(z_0)+Q(z))}\le
Cm^2,\quad z\in\C,\end{equation} with a number $C$ depending only on
$\tau$. The assertion \eqref{bupp} follows immediately in case
$d=0$. In the remaining case we have $d>0$ and $m\le d^{-2}$. Then,
for any $\epsilon>0$ and any $z\in\C$, we have that
$\e^{-\epsilon\sqrt{m}\min\{d,\babs{z_0-z}\}}\ge \e^{-\epsilon
d^{-1}d}=\e^{-\epsilon},$ so that
\begin{equation}\label{buss}\babs{K_{m,n}(z_0,z)}^2\e^{-m(Q(z_0)+Q(z))}\le
C\e^\epsilon\e^{-\epsilon\sqrt{m}\min\{d,\babs{z_0-z}\}}.\end{equation}
This gives the desired estimate \eqref{bupp} with $C$ replaced by
$C\e^\epsilon$.

\subsection{Notation}\label{nota}
We now fix positive numbers $\apar$ and $\bpar$ such that the
relation \eqref{sss} is satisfied. We further choose $\apar$ and
$\bpar$ so that $](m-\apar)\tau+\bpar[\le m\tau-M$ for all $m\ge 1$,
and let $m_0=\max\{(1+\apar)/\tau,4\apar\}$. Let $n$ be a positive
integer such that $n\le m\tau+1$. We also fix a point
$z\in\setS_\tau$ and let $R$ be a number such that $\setS_\tau\subset \D(0;R)$.

\subsection{Case 2: $m\le m_0$.}
Given any $\epsilon>0$, we have that
$\e^{-\epsilon\sqrt{m}\min\{\babs{z_0-z},d\}}\ge
\e^{-\epsilon\sqrt{m_0}R},$ and so \eqref{spuc} implies that
\begin{equation*}\babs{K_{m,n}(z_0,z)}^2\e^{-m(Q(z_0)+Q(z))}\le
C\e^{\epsilon\sqrt{m_0}R}\e^{-\epsilon\sqrt{m}\min\{\babs{z_0-z},d\}}.\end{equation*}
Thus \eqref{bupp} holds with $C$ replaced by
$C\e^{\epsilon\sqrt{m_0}R}$.

\subsection{Case 3: $\babs{z-z_0}\le 8/\sqrt{m}$.} In this case,
%$\min\{d,\babs{z_0-z}\}\le 8/\sqrt{m}$ whence
$\e^{-\epsilon\sqrt{m}\min\{d,\babs{z_0-z}\}}\ge \e^{-8\epsilon}$, and thus
\eqref{spuc} implies
\begin{equation*}\babs{K_{m,n}(z_0,z)}^2\e^{-m(Q(z_0)+Q(z))}\le
C\e^{8\epsilon}m^2\e^{-\sqrt{m}\min\{d,\babs{z_0-z}\}}.
\end{equation*}
We have shown \eqref{bupp} in the case when $\babs{z_0-z}\le
8/\sqrt{m}$ with $C$ replaced by $C\e^{8\epsilon}$.

\subsection{Case 4: $m\ge m_0$, $md^2\ge 1$ and $\babs{z-z_0}\ge 8/\sqrt{m}$.}
In the sequel we fix any integer $n$ with $n\ge
](m-\apar)\tau+\bpar[$. Here $](m-\apar)\tau+\bpar[>0$ for all $m\ge
m_0$ by our choice of $m_0$ (see Subsect. \ref{nota}).

It is important to note that the assumption $md^2\ge 1$ means that
$1/\sqrt{m}\le d$ so that
\begin{equation*}8/\sqrt{m}\le
8d=\dist(z_0,\C\setminus(\setS_\tau\cap\setX)).\end{equation*} Our
starting point is Lemma \ref{lemm3}, which yields that
\begin{equation}\label{sim}\babs{K_{m,n}(z,z_0)}^2\e^{-mQ(z)}\le
Cm\int_{\D(z;1/\sqrt{m})}\babs{K_{m,n}(\zeta,z_0)}^2\e^{-mQ(\zeta)}\dA(\zeta),
\quad z\in \setS_\tau,
\end{equation}
where the number $C$ only depends on $A$. Define
\begin{equation*}\eps_0(\zeta)=\min\{\babs{z_0-\zeta}/2,4d\},\quad \zeta\in \C.\end{equation*}
Note that
\begin{equation}\label{siz}4/\sqrt{m}\le \eps_0(z)\le 4d.\end{equation}
Let $\chi_0$ be a smooth non-negative function such that
\begin{equation}\chi_0=0\quad \text{on}\quad \D(z_0;\eps_0(z)/2)\quad
\text{and}\quad \chi_0=1\quad\text{outside}\quad
\D(z_0;\eps_0(z)),\end{equation} and also $\babs{\dbar\chi_0}^2\le
(C/\eps_0(z)^2)\chi_0$ with $C$ an absolute constant ($C=5$ will
do). In view of \eqref{siz}, it yields that
\begin{equation*}\babs{\dbar\chi_0}^2\le
Cm\chi_0\quad \text{on}\quad \C,\end{equation*} where $C=5/16$.
Notice that $\dbar\chi_0$ is supported in the annulus
\begin{equation}\label{ann}U_0=U_0(z_0,z)=\{\zeta;\, \eps_0(z)/2\le \babs{z_0-\zeta}\le
\eps_0(z)\}.\end{equation}

Since $\chi_0$ is non-negative on $\C$ and since $\chi_0=1$ on
$\D(z;1/\sqrt{m})$, the estimate \eqref{sim} implies that
\begin{equation}\label{gops}\babs{K_{m,n}(z,z_0)}^2\e^{-mQ(z)}\le
Cm\int_\C\chi_0(\zeta)\babs{K_{m,n}(\zeta,z_0)}^2\e^{-mQ(\zeta)}\dA(\zeta),
\end{equation}
where $C$ only depends on $A$. Let $H_{\chi_0,m,n}$ be the linear
space $\Hspace_{m,n}$ with inner product
\begin{equation*}\langle f,g\rangle_{\chi_0,mQ}=\int_\C f\bar g\chi_0\e^{-mQ}
\dA.\end{equation*} We rewrite integral in the the right hand side
in \eqref{gops} in the following way
\begin{multline}
\int_\C
\chi_0(\zeta)\babs{K_{m,n}(\zeta,z_0)}^2\e^{-mQ(\zeta)}\dA(\zeta)
=\sup\biggl\{\babs{\langle
u,K_{m,n}(\cdot,z_0)\rangle_{\chi_0,mQ}}^2;\,
u\in\Hspace_{m,n},\, \int_\C \babs{u}^2\chi_0\e^{-mQ}\dA\le 1\biggr\}=\\
=\sup\biggl\{\babs{\bigl\langle\chi_0 u,
K_{m,n}(\cdot,z_0)\bigr\rangle_{mQ}}^2;\,u\in\Hspace_{m,n},
\,\int_\C\babs{u}^2\chi_0 \e^{-mQ}\dA\le 1\biggr\}=\\
=\sup\biggl\{\big|\Pop_{m,n}[\chi_0
u](z_0)\big|^2;\,u\in\Hspace_{m,n},\, \int_\C\babs{u}^2\chi_0
\e^{-mQ}\dA\le 1 \biggr\}, \label{basic-1}
\end{multline}
where $\Pop_{m,n}$ is the orthogonal projection of $L^2_{mQ}$ onto
$\Hspace_{m,n}$. Now fix $u\in\Hspace_{m,n}$ with
$\int_\C\babs{u}^2\chi_0\e^{-mQ}\dA\le 1$ and recall that
$\Pop_{m,n}[\chi_0 u]=\chi_0u-u_*,$ where $u_*$ is the
$L^2_{mQ,n}$-minimal solution to $\dbar u_*=\dbar(\chi_0
u)=u\dbar\chi_0$. In particular, $u_*$ is holomorphic in $\D(z_0;\eps_0(z)/2)$ and
$u_*=-\Pop_{m,n}[\chi_0 u]$ there. See Subsect. \ref{init}.

We intend to apply Th. \ref{boh} with a suitable real-valued
function $\Tfun_m$. We shall at first only specify $\Tfun_m$ by
requiring certain properties of it. An explicit construction of
$\Tfun_m$ is then given at the end of the proof.

\medskip
\noindent \bf Condition 1. \rm We require that
\begin{equation}\label{rc1}\Tfun_m=0\quad \text{on}\quad
\D(z_0;1/(2\sqrt{m})),\end{equation}

\medskip
\noindent \bf Condition 2. \rm There exists a number $\epsilon>0$
depending only on $a$, $\apar$ and $\tau$, such that
\begin{equation*}\Tfun_m(\zeta)\le
-\epsilon\sqrt{m}\eps_0(z)/4\quad \text{when}\quad \babs{\zeta-z_0}\ge
\eps_0(z)/2,\end{equation*}

\medskip
\noindent \bf Condition 3. \rm The various conditions on $\Tfun_m$
in Th. \ref{boh} are satisfied.
More precisely,

(i) $\Tfun_m$ is $\calC^{1,1}$-smooth and
\begin{equation*}(m-\apar)\Delta Q(\zeta)+\Delta \Tfun_m(\zeta)\ge
ma/2,\quad \text{for a.e.}\quad \zeta\in
\overline{\D}(z_0;6d),\end{equation*}

(ii) $\Tfun_m$ is constant in $\C\setminus \overline{\D}(z_0;6d)$,

(iii) We have that
\begin{equation*}\frac {|\dbar \Tfun_m|^2} {ma/2}\le \frac 1
{4\e^{\apar q_\tau}}\quad \text{on}\quad \C,\end{equation*}
where $q_\tau=\sup_{\setS_\tau}\{Q(\zeta)\}$.

It is clear that (i), (ii) and (iii) imply the conditions on
$\Tfun_m$ in Th. \ref{boh}, with $\kappa=1/2$.

We now turn to consequences of the conditions on $\Tfun_m$, and
start with condition 1. Applying Lemma \ref{lemm2}, we find that,
for any real function $\Tfun_m$ satisfying \eqref{rc1}, we have
\begin{equation}\label{gops4}\babs{u_*(z_0)}^2\e^{-mQ(z_0)}\le
Cm\int_{\D(z_0;1/(2\sqrt{m}))}\babs{u_*(\zeta)}^2\e^{-mQ(\zeta)}\dA(\zeta)\le
Cm\int_\C
\babs{u_*(\zeta)}^2\e^{\Tfun_m(\zeta)-mQ(\zeta)}\dA(\zeta),\end{equation}
where $C$ depends only on $A$.

By Condition 3, we may apply Th. \ref{boh} to the integral in the
right hand side in \eqref{gops4}. It yields that
\begin{equation}\label{gops5}\int_\C\babs{u_*}^2
\e^{\Tfun_m-mQ}\dA\le C\int_{U_0}\babs{u\dbar\chi_0}^2 \frac
{\e^{\Tfun_m-mQ}}{ma/2+\bpar c_\tau} \dA,
\end{equation}
with a number $C$ depending only on $\tau$ and $\apar$ (note that
$(1-\kappa)^{-2}=4$). Here
$c_\tau=\inf_{\setS_\tau}\{(1+\babs{\zeta}^2)^{-2}\}
\ge (1+R^2)^{-2}$ and $U_0$ is the annulus defined
in \eqref{ann}.

But since $\babs{\dbar\chi_0}^2\le Cm\chi_0$, \eqref{gops5} implies
\begin{equation*}\int_\C\babs{u_*}^2
\e^{\Tfun_m-mQ}\dA\le \frac {Cm}{am+\bpar
c_\tau}\int_{U_0}\babs{u}^2\chi_0\e^{\Tfun_m-mQ}\dA.\end{equation*}
Here $C$ only depends on $\apar$ and $\tau$. We now use Condition 2,
which implies that $\Tfun_m(\zeta)\le-\epsilon\sqrt{m}\eps_0(z)$
whenever $\zeta\in U_0$. It yields that we may continue to estimate
\begin{equation}\label{gops7}
\int_{U_0}\babs{u}^2\chi_0\e^{\Tfun_m-mQ}\dA\le
\e^{-\epsilon\sqrt{m}\eps_0(z)/4}\int_\C
\babs{u}^2\chi_0\e^{-mQ}\dA\le
\e^{-\epsilon\sqrt{m}\min\{\babs{z_0-z}/8,d\}},\end{equation} where
we have used that $\int_\C \babs{u}^2\chi_0\e^{-mQ}\dA\le 1$ in the
last step.

Tracing back through \eqref{gops}--\eqref{gops7}, we infer that
\begin{equation*}\babs{K_{m,n}(z_0,z)}^2\e^{-m(Q(z_0)+Q(z))}\le
C\frac {m^3} {am+\bpar
c_\tau}\e^{-\epsilon\sqrt{m}\min\{\babs{z_0-z}/8,d\}}\le
Ca^{-1}m^2\e^{-\epsilon\sqrt{m}\min\{\babs{z_0-z}/8,d\}},\end{equation*}
where $C$ depends on $\apar$, $\tau$, and $A$. This proves Lemma
\ref{propn1} (with $\epsilon/8$ instead of $\epsilon$ and $Ca^{-1}$
in place of $C$) under the hypotheses that a function $\Tfun_m$
satisfying conditions 1,2, and 3 above exists. To finish the proof
we must verify the existence of such a $\Tfun_m$.

\subsection{Construction of $\Tfun_m$.} We now construct a
function $\Tfun_m$ and a positive number $\epsilon$ which satisfy the conditions 1, 2, and 3 above.
We look for a radial function of the form
\begin{equation*}\Tfun_m(\zeta)=-\epsilon\sqrt{m}S_m(\babs{\zeta-z_0}),\end{equation*}
where the number $\epsilon>0$ will be fixed later. We start by
giving an explicit construction of $S_m$, the proof of conditions 1
though 3 will then be accomplished without difficulty.

We recall that $1/\sqrt{m}\le d$ and start by specifying the
derivative $S_m^\prime$ to be the piecewise linear continuous
function on $[0,\infty)$ such that
\begin{equation*}S_m^\prime=0\quad \text{on} \quad [0,1/(2\sqrt{m})]\cup
[6d,\infty)\quad \text{and}\quad S_m^\prime=1\quad \text{on}\quad
[1/\sqrt{m},5d],\end{equation*} and $S_m^\prime$ is affine on each
of the intervals $[1/(2\sqrt{m}),1/\sqrt{m}]$ and $[5d,6d]$. The
distributional derivative of $S_m^\prime$ is then a linear
combination of characteristic functions,
\begin{equation*}S_m^\dprime=2\sqrt{m}\1_{[1/(2\sqrt{m}),
1/\sqrt{m}]}-(1/d)\1_{[5d,6d]},\end{equation*}
so that (since $md^2\ge 1$)
\begin{equation*}\babs{S_m^\dprime}\le
\max\{2\sqrt{m},1/d\}=2\sqrt{m}.\end{equation*} We now define $S_m$
by requiring that $S_m(0)=0$. Since $S_m^\prime=0$ on
$[0,1/(2\sqrt{m})]$ it is then clear that $S_m=0$ on $[0,1/(2\sqrt{m})]$.
Moreover, when $2/\sqrt{m}\le t\le 5d$, we get
\begin{equation*}S_m(t)=\int_0^t S_m^\prime(s)\diff s\ge
t-1/\sqrt{m}\ge t/2,\quad t\in [2/\sqrt{m},5d],\end{equation*} since
$S_m^\prime=1$ on $[1/\sqrt{m},5d]$. When $t\ge 5d$, we plainly have
\begin{equation*}S_m(t)\ge 5d-1/\sqrt{m}\ge 4d.\end{equation*}
We conclude that
\begin{equation}\label{auto}S_m(t)\ge \min\{t/2,4d\},\quad t\ge
2/\sqrt{m}.\end{equation} In particular, denoting by $c_m$ the
constant value that $S_m$ assumes on $[6d,\infty)$, we have $c_m\ge
4d$. This finishes the construction of $S_m$, and the corresponding
function $\Tfun_m$ is clearly of class $\calC^{1,1}(\C)$.

We now verify the conditions 1 through 3 above.  First, Condition 1
is clear, since $S_m(t)=0$ when $t\le 1/(2\sqrt{m})$. Also, part (ii)
of condition 3 is clear; since $S_m(t)=c_m$ is constant when $t\ge
6d$, we have that $\Tfun_m(\zeta)=-\epsilon\sqrt{m}c_m$ is constant
when $\zeta\not\in \overline{\D}(0;6d)$.

Since $\eps_0(z)\ge 4/\sqrt{m}$, \eqref{auto} implies that
$\varrho_m(\zeta)=-\epsilon S_m(\babs{\zeta-z_0})\le-\epsilon\eps_0(\zeta)$ when
$\babs{\zeta-z_0}\ge\eps_0(z)/2$. Since moreover $\eps_0(\zeta)\ge \eps_0(z)/4$ in this case, we get that condition 2 is satisfied.

There remains to check parts (i) and (iii) of condition 3, and to make precise what we mean by "$\epsilon$". To this
end, we need the following estimates
\begin{equation}\label{827}\babs{\dbar
\Tfun_m(\zeta)}^2=\epsilon^2m\babs{S_m^\prime(\babs{\zeta-z_0})}^2/4\le
\epsilon^2 m/4,\qquad \zeta\in \C,\end{equation} and
\begin{equation}\label{828}\babs{\Delta \Tfun_m(\zeta)}\le \frac \epsilon 4\sqrt{m}\lpar
\babs{S_m^\dprime\lpar \babs{\zeta-z_0}\rpar}+\frac
{S_m^\prime\lpar\babs{z_0-\zeta}\rpar}
{\babs{z_0-\zeta}}\rpar\le \epsilon m,\qquad \zeta\in
\C,\end{equation} which follows immediately from the properties of
$S_m$ (since $\babs{z_0-\zeta}\ge 1/(2\sqrt{m})$ when
$S_m^\prime(\babs{z_0-\zeta})\ne 0$).

To verify (i), we use \eqref{828}. It yields that it suffices to
choose $\epsilon>0$ such that $(m-\apar)a-\epsilon m\ge ma/2$ for
$m\ge m_0$, i.e. $\epsilon\le (1/2-\apar/m)a.$ Since we have assumed
that $m_0\ge 4\apar$, it thus suffices to choose $\epsilon=a/4$.

We finally verify (iii). By \eqref{827}, it suffices to choose an
$\epsilon>0$ such that $(\epsilon^2/4)/(a/2)\le 1/(4\e^{\apar
q_\tau})$, i.e. $\epsilon^2 \le a/(2\e^{\apar q_\tau})$.

We have verified the existence of $\epsilon>0$, of the form
$\epsilon=\min\{\sqrt{a/(2\e^{\apar q_\tau})},a/2\}$. This shows
that the choice $\epsilon=c\min\{a,1\}$ works with a proportionality constant
$c$ which depends on $\apar$ and $q_\tau$. The proof is finished.
\hfill $\qed$

\subsection{The proof of Theorem \ref{th1.5}} Let $K$ be a
compact subset of $\setS_\tau^\circ\cap \setX$, and pick $M\ge 0$.
By Th. \ref{th3} we have that $K_{m,n}(z_0,z_0)\e^{-mQ(z_0)}=m\Delta
Q(z_0)+\calO(1)$ as $m\to\infty$ and $n\ge m\tau-M$, where the
$\calO$ is uniform for $z_0\in K$. It yields that
\begin{equation}\label{abo}\berd_{m,n}^{\langle z_0\rangle}(z)=\frac
{\babs{K_{m,n}(z,z_0)}^2} {K_{m,n}(z_0,z_0)}\e^{-mQ(z)}=\frac
{\babs{K_{m,n}(z,z_0)}^2} {m\Delta
Q(z_0)+\calO(1)}\e^{-m(Q(z)+Q(z_0))},\quad z\in\C,\end{equation} as
$m\to \infty$ and $n\ge m\tau-M$. Since $\Delta Q$ is bounded below
by a positive number on $K$, the right hand side in \eqref{abo} can
be estimated by
\begin{equation*}Cm^{-1}\babs{K_{m,n}(z,z_0)}^2\e^{-m(Q(z)+Q(z_0))},\quad z\in \C\end{equation*}
where $C$ depends on the lower bound of $\Delta Q$ on $K$. The
assertion now follows from Th. \ref{flock}.

\hfill$\qed$

\section{The Bargmann--Fock case and harmonic measure} \label{new}

\subsection{Preliminaries.} In this section we prove
Th. \ref{th5}. We therefore put $Q(z)=\babs{z}^2$. Recall that in
this case $\setS_\tau=\overline{\D}(0;\sqrt{\tau}),$ and (see
\eqref{bfock})
\begin{equation}
\label{bfock-2} \diff B^{\langle {z_0}\rangle}_{m,n}(z)=
m\frac{|E_{n-1}(mz\bar{z}_0)|^2}{E_{n-1}(m\babs{{z_0}}^2)}\e^{-m|z|^2}\diff
A(z)\qquad\text{where}\qquad E_k(z)=\sum_{j=0}^k\frac {z^j} {j!}.
\end{equation}

\subsection{The action on polynomials.}

\begin{prop} Fix a complex number ${z_0}\ne 0$, a positive integer $d$ and let $n$ be an integer, $n\ge d+1$.
Then, for all analytic polynomials
$u$ of degree at most $d$, we have
\begin{equation}
\label{pv} \pv\int_\C u(z^{-1})\diff B^{\langle {z_0}\rangle}_{m,n}
\to u(z_0^{-1}),\qquad\text{as}\quad m\to\infty,
\end{equation}
uniformly in $n$, $n\ge d+1$. \label{prop-pv}
\end{prop}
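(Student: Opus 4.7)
The plan is to reduce to monomials and carry out an explicit computation. By linearity it suffices to prove \eqref{pv} for $u(z) = z^k$ with $0 \le k \le d$; the case $k = 0$ is automatic since $B^{\langle z_0 \rangle}_{m,n}$ is a probability measure, so I fix $1 \le k \le d$. I will expand
$$\babs{E_{n-1}(mz\bar{z}_0)}^2 = \sum_{j,\ell=0}^{n-1} \frac{m^{j+\ell}}{j!\,\ell!}\, z^j \bar{z}^\ell \bar{z}_0^{\,j} z_0^{\,\ell},$$
substitute into \eqref{bfock-2}, and evaluate the resulting integrals $\int_{\{\babs{z} > \varepsilon\}} z^{j-k} \bar{z}^\ell e^{-m\babs{z}^2}\, \dA(z)$ in polar coordinates before letting $\varepsilon \to 0^+$.

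The angular integral vanishes unless $j - k - \ell = 0$, which both kills all terms with $j < k$ and forces $\ell = j - k$ in the surviving terms. For those surviving terms the radial integrand $r^{2(j-k)+1} e^{-mr^2}$ is integrable at the origin, so the principal value exists and the truncation in $\varepsilon$ can be safely removed. A routine Gaussian integration together with the identity $\bar{z}_0^{\,j} z_0^{j-k} = \babs{z_0}^{2j} z_0^{-k}$ then gives
$$\pv \int_\C z^{-k}\, \diff B^{\langle z_0\rangle}_{m,n}(z) = \frac{z_0^{-k}}{E_{n-1}(m\babs{z_0}^2)} \sum_{j=k}^{n-1} \frac{(m\babs{z_0}^2)^j}{j!} = z_0^{-k}\lpar 1 - \frac{E_{k-1}(m\babs{z_0}^2)}{E_{n-1}(m\babs{z_0}^2)}\rpar.$$

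It remains to show that $E_{k-1}(m\babs{z_0}^2)/E_{n-1}(m\babs{z_0}^2) \to 0$ uniformly in $n \ge d+1$. The numerator is a fixed polynomial in $m$ of degree $k - 1 \le d - 1$. Since the power series of the exponential has non-negative coefficients and $n - 1 \ge d$, one has $E_{n-1}(m\babs{z_0}^2) \ge E_d(m\babs{z_0}^2) \ge (m\babs{z_0}^2)^d/d!$, which grows like $m^d$. Hence the ratio is uniformly $\calO(m^{k-1-d}) = \calO(m^{-1})$, and tends to zero as desired. There is no substantial obstacle here; the only point demanding a little care is confirming that angular cancellation legitimately reduces the principal value to the truncated computation, which is immediate because the angular averaging operates identically on $\{\babs{z} > \varepsilon\}$ as on all of $\C$.
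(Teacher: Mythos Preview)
Your proof is correct and follows essentially the same route as the paper: reduce to monomials, expand the square, use angular orthogonality to pick out the diagonal terms, and arrive at the identity $\pv\int_\C z^{-k}\diff B^{\langle z_0\rangle}_{m,n}=z_0^{-k}\bigl(1-E_{k-1}(m|z_0|^2)/E_{n-1}(m|z_0|^2)\bigr)$. The only cosmetic difference is in the final step, where the paper bounds the ratio by $E_{d-1}/E_d\to 0$ while you bound the denominator below by $(m|z_0|^2)^d/d!$; your version has the minor bonus of giving an explicit $\calO(m^{-1})$ rate.
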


\begin{proof} It is sufficient to prove the statement for $u(z)=z^j$
with $j\le d$. The left hand side in \eqref{pv}
can then be written
\begin{equation*}\pv \int_\C z^{-j}\diff B^{\langle {z_0}\rangle}_{m,n}
=\frac{m\bfun_{m,n}^j({z_0})}{E_{n-1}(m\babs{{z_0}}^2)},\end{equation*}
where we have put
\begin{equation*}\bfun_{m,n}^j({z_0})=\pv \int_{\C}
z^{-j}\bigg|\sum_{k=0}^{n-1}\frac {(m{z_0}\bar{z})^k}
{k!}\bigg|^2\e^{-m\babs{z}^2}\dA(z).
\end{equation*}
Expanding the square yields
\begin{equation*}b_{m,n}^j({z_0})=\sum_{k,l=0}^{n-1}
\frac {m^{k+l}{z_0}^k\bar{z}_0^l}
{k!l!}\pv\int_{\C}\bar{z}^kz^{l-j}\e^{-m\babs{z}^2}\dA(z).
\end{equation*}
Clearly only those $k,l$ for which $k=l-j$ give a non-zero
contribution to the sum, and therefore,
\begin{equation*}b_{m,n}^j({z_0})={z_0}^{-j}\sum_{l=j}^{n-1}\frac
{m^{2l-j}\babs{{z_0}}^{2l}}
{(l-j)!l!}\int_{\C}\babs{z}^{2(l-j)}\e^{-m\babs{z}^2}\dA(z)=
\frac{1}{m{z_0}^{j}}\sum_{l=j}^{n-1}\frac {m^{l}\babs{{z_0}}^{2l}} {l!}.
\end{equation*}
It follows that
\begin{equation*}b_{m,n}^j({z_0})=\frac{1}{m{z_0}^{j}}\sum_{l=j}^{n-1}
\frac {(m\babs{{z_0}}^2)^l} {l!}=\frac{1}{m{z_0}^j}\lpar E_{n-1}(m\babs{{z_0}}^2)-
E_{j-1}(m\babs{{z_0}}^2)\rpar,\end{equation*}
and so
\begin{equation*}\frac {mb_{m,n}^j({z_0})} {E_{n-1}(m\babs{{z_0}}^2)}=\frac1{{z_0}^j}\lpar
1-\frac{E_{j-1}(m\babs{{z_0}}^2)}{E_{n-1}(m\babs{{z_0}}^2)}\rpar.
\end{equation*}
Finally, since $j\le d<n$,
$$\frac{E_{j-1}(m\babs{{z_0}}^2)}{E_{n-1}(m\babs{{z_0}}^2)}\le
\frac{E_{d-1}(m\babs{{z_0}}^2)}{E_{d}(m\babs{{z_0}}^2)}\to0\quad\text{as}\quad
m\to\infty.$$
\end{proof}

\begin{prop} \label{bulk}
Let $0<r<\sqrt{\tau}$, ${z_0}\in\C\setminus\overline{\D}(0;\sqrt{\tau})$
and $u$ an analytic polynomial. Then
\begin{equation*}\pv\int_{\D(0;r)}u(z^{-1})
\diff B^{\langle {z_0}\rangle}_{m,n}(z)\to0
\qquad\text{as}\quad
m\to\infty\quad \text{and}\quad  n/m\to\tau.
\end{equation*}
\end{prop}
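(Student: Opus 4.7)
The plan is to reduce to monomials $u(z)=z^j$ by linearity, and then to repeat the polar-coordinate computation from the proof of Proposition \ref{prop-pv}, but with the domain of integration restricted to $\D(0;r)$. Expanding $|E_{n-1}(mz\bar z_0)|^2$ as a double sum, the angular integral kills every pair $(k,l)$ except the ``shifted diagonal'' $l=k-j$ (and the terms with $k<j$ vanish as principal values); the radial integrals are evaluated via $\int_0^r\rho^{2i+1}\e^{-m\rho^2}\,\dr = i!(1-\e^{-mr^2}E_i(mr^2))/(2m^{i+1})$, using the standard lower-incomplete-Gamma identity. After rearrangement, this should yield
\begin{equation*}
\pv\int_{\D(0;r)} z^{-j}\,\diff B^{\langle z_0\rangle}_{m,n}(z) = z_0^{-j}\left[1-\frac{E_{j-1}(m|z_0|^2)}{E_{n-1}(m|z_0|^2)}\right] - z_0^{-j}\,\Sigma_{m,n,j},
\end{equation*}
where
\begin{equation*}
\Sigma_{m,n,j} := \frac{1}{E_{n-1}(m|z_0|^2)}\sum_{k=j}^{n-1}\frac{(m|z_0|^2)^k}{k!}\,\e^{-mr^2}E_{k-j}(mr^2).
\end{equation*}
Specialising this identity to $r\to\infty$ recovers the full-plane formula derived in the proof of Proposition \ref{prop-pv}, which serves as a useful consistency check.

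The next step is to prove $\Sigma_{m,n,j}\to 1$ as $m\to\infty$ and $n/m\to\tau$. I would interpret the summand probabilistically: let $J\sim\mathrm{Poisson}(mr^2)$ and let $K$ be an independent random variable with distribution $\Pr(K=k)=(m|z_0|^2)^k/(k!\,E_{n-1}(m|z_0|^2))$ on $\{0,1,\ldots,n-1\}$, so that $\Sigma_{m,n,j}=\Pr(J+j\le K)$. A standard Chernoff estimate gives $\Pr(J>mr^2+\delta m)\le \e^{-c_1 m}$ for any sufficiently small $\delta>0$. For $K$, the unrestricted Poisson mean $m|z_0|^2$ lies above the right endpoint $n-1\approx m\tau$, so the truncation piles the mass of $K$ near $n-1$; a Stirling calculation---bounding the partial sum $\sum_{k<n-1-\delta m}(m|z_0|^2)^k/k!$ by its number of terms times its largest summand, then dividing by the single dominant term $(m|z_0|^2)^{n-1}/(n-1)!$ of the denominator---yields $\Pr(K<n-1-\delta m)\le \e^{-c_2 m}$. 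Since $r^2<\tau$, one can fix $\delta>0$ small enough that $mr^2+j+\delta m<n-1-\delta m$ for all large $m$, and hence $\Sigma_{m,n,j}\to 1$ exponentially fast.

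The first bracket in the displayed formula also tends to $1$: the numerator $E_{j-1}(m|z_0|^2)$ is a polynomial in $m$ of fixed degree, while $E_{n-1}(m|z_0|^2)$ grows super-exponentially because $|z_0|^2>\tau$ forces the dominant term $(m|z_0|^2)^{n-1}/(n-1)!$ to behave like $(\e|z_0|^2/\tau)^{n-1}/\sqrt{n}$ by Stirling. Both terms on the right-hand side of the formula therefore tend to $z_0^{-j}$, and they enter with opposite signs; their difference tends to $0$, and summation over the finitely many monomials of $u$ completes the proof. The main technical obstacle is the lower-tail bound for the truncated distribution of $K$, which is less textbook than the ordinary Poisson tail used for $J$; the explicit Stirling estimate sketched above resolves it, and nothing else in the plan looks substantive.
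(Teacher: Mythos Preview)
Your argument is correct, and the identity
\[
\pv\int_{\D(0;r)} z^{-j}\,\diff B^{\langle z_0\rangle}_{m,n}(z)
= z_0^{-j}\Bigl[1-\tfrac{E_{j-1}(m|z_0|^2)}{E_{n-1}(m|z_0|^2)}\Bigr]
- z_0^{-j}\,\Sigma_{m,n,j}
\]
is exactly right (it follows from the lower incomplete Gamma identity $\int_0^x s^i\e^{-s}\diff s = i!\bigl(1-\e^{-x}E_i(x)\bigr)$ after the same angular cancellation as in Proposition~\ref{prop-pv}). The probabilistic reading $\Sigma_{m,n,j}=\Pr(J+j\le K)$ with $J\sim\mathrm{Poisson}(mr^2)$ and $K$ a truncated Poisson$(m|z_0|^2)$ on $\{0,\dots,n-1\}$ is clean, and your tail bounds are valid: the Chernoff estimate for $J$ is standard, and for $K$ the ratio bound $\sum_{k<n-1-\delta m}\lambda^k/k!\big/\bigl(\lambda^{n-1}/(n-1)!\bigr)\le n\,\bigl((n-1)/\lambda\bigr)^{\delta m}$ with $\lambda=m|z_0|^2$ decays geometrically since $(n-1)/\lambda\to\tau/|z_0|^2<1$. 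Choosing $\delta$ small enough that $r^2+2\delta<\tau-\delta$ then forces $\Sigma_{m,n,j}\to1$, and the remaining bracket tends to $1$ for the reason you give.

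The paper proceeds differently: it does \emph{not} evaluate the radial integral $\int_0^{mr^2}s^{l-\nu}\e^{-s}\diff s$ in closed form. Instead it leaves the incomplete Gamma integral as is, splits the sum over $l$ at an intermediate index $k\approx m\beta$ with $r^2<\beta<\tau$, bounds the low part by extending the integral to $\infty$ (yielding $E_{k-1}(m|z_0|^2)/E_{n-1}(m|z_0|^2)$, handled via Szeg\H{o}'s asymptotics for truncated exponentials), and bounds the high part using monotonicity of $s\mapsto s^{l-\nu}\e^{-s}$ together with Stirling. Your route replaces the somewhat delicate Szeg\H{o} input by elementary Poisson/Stirling tail estimates and gives a more transparent structure (full-plane value minus correction $\Sigma$); the paper's route avoids the probabilistic detour and works entirely with direct analytic estimates. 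Both arrive at exponential decay of the relevant pieces.
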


\begin{proof} Put, for $\nu=0,1,2,\ldots$,
$$b_{m,n}^\nu({z_0},r)=\pv\int_{\D(0;r)}z^{-\nu}
\diff B^{\langle {z_0}\rangle}_{m,n}.$$ A straightforward calculation
based on \eqref{bfock-2} leads to
\begin{equation}
\label{grr}
b_{m,n}^\nu({z_0},r)=\frac 1
{z_0^\nu E_{n-1}(m\babs{{z_0}}^2)}\sum_{j=\nu}^{n-1}\frac {(m\babs{{z_0}}^2)^j}
{j!(j-\nu)!}\int_0^{mr^2} s^{j-\nu}\e^{-s}\diff s.
\end{equation}
We suppose $n$ is greater than $\nu$ by at least two units, so that we may
pick an integer $k$ with $\nu<k<n$, and split the sum \eqref{grr} accordingly:
\begin{equation}
\label{grr-1}
b_{m,n}^\nu({z_0},r)=\frac 1
{z_0^\nu E_{n-1}(m\babs{{z_0}}^2)}\Bigg\{\sum_{j=\nu}^{k-1}\frac {(m\babs{{z_0}}^2)^j}
{j!(j-\nu)!}\int_0^{mr^2} s^{j-\nu}\e^{-s}\diff s
+\sum_{j=k}^{n-1}\frac {(m\babs{{z_0}}^2)^j}
{j!(j-\nu)!}\int_0^{mr^2} s^{j-\nu}\e^{-s}\diff s\Bigg\}.
\end{equation}
We estimate the first term trivially as follows:
\begin{equation}
\label{grr-2} \sum_{j=\nu}^{k-1}\frac {(m\babs{{z_0}}^2)^j}
{j!(j-\nu)!}\int_0^{mr^2} s^{j-\nu}\e^{-s}\diff s\le
\sum_{j=\nu}^{k-1}\frac {(m\babs{{z_0}}^2)^j}
{j!(j-\nu)!}\int_0^{\infty} s^{j-\nu}\e^{-s}\diff s=\sum_{j=0}^{k-1}
\frac {(m\babs{{z_0}}^2)^j}{j!}=E_{k-1}(m|{z_0}|^2).
\end{equation}
As for the second term, we use the fact that the function $s\mapsto
s^{j-\nu}\e^{-s}$ is increasing on the interval $[0,j-\nu]$, to say
that \begin{equation*}\int_0^{mr^2} s^{j-\nu}\e^{-s}\diff
s\le(mr^2)^{j-\nu+1}\e^{-mr^2},\end{equation*} provided that $j\ge
mr^2+\nu$. It follows that if $k\ge mr^2+\nu$, then
\begin{equation*}\sum_{j=k}^{n-1}\frac {(m\babs{{z_0}}^2)^j}
{j!(j-\nu)!}\int_0^{mr^2} s^{j-\nu}\e^{-s}\diff s\le
(mr^2)^{1-\nu}\e^{-mr^2}\sum_{j=k}^{n-1}\frac
{(mr\babs{{z_0}})^{2j}} {j!(j-\nu)!}.\end{equation*} By Stirling's
formula, $j!\ge \sqrt{2\pi}j^{j+1/2}\e^{-j},$ so that
\begin{equation*}\frac {(mr\babs{{z_0}})^{2j}}
{j!}\e^{-mr^2}\le\frac{1}{\sqrt{2\pi j}}m^{j}|{z_0}|^{2j}
\lpar\frac{mr^2}{j}\e^{1-\frac{mr^2}{j}}\rpar^j.\end{equation*}
Since the function $x\mapsto x\e^{1-x}$ is increasing on the
interval $[0,1]$, it yields that \begin{equation*}\frac
{(mr\babs{{z_0}})^{2j}} {j!}\e^{-mr^2}\le\frac{1}{\sqrt{2\pi j}}
m^{j}|{z_0}|^{2j} \lpar\frac{mr^2}{k}\e^{1-\frac{mr^2}{k}}\rpar^{j},
\qquad mr^2+\nu\le k\le j.\end{equation*} We write
\begin{equation}
\label{eq-ckm} c_{k,m}=\frac{mr^2}{k}\e^{1-\frac{mr^2}{k}}\le1,
\qquad mr^2+\nu\le k,
\end{equation}
and conclude that
\begin{multline}
\sum_{j=k}^{n-1}\frac {(m\babs{{z_0}}^2)^j}
{j!(j-\nu)!}\int_0^{mr^2} s^{j-\nu}\e^{-s}\diff s\le
(mr^2)^{1-\nu}\sum_{j=k}^{n-1}
\frac{(m|{z_0}|^2c_{k,m})^j}{(j-\nu)!\sqrt{2\pi j}}\\
\le  \frac{(mr^2)^{1-\nu}}{\sqrt{2\pi}}
(c_{k,m})^k\sum_{j=k}^{n-1}\frac{(m|{z_0}|^2)^j}{(j-\nu)!}\le
\frac{mr^2}{\sqrt{2\pi}}\Big(\frac{|{z_0}|}{r}\Big)^{2\nu}
(c_{k,m})^kE_{n-\nu-1}(m|{z_0}|^2). \label{eq-101}
\end{multline}
Now, a combination \eqref{grr-2} and \eqref{eq-101} applied to \eqref{grr-1}
yields
\begin{multline}
\label{grr-1'} \babs{{z_0^{\nu}}b_{m,n}^\nu({z_0},r)}\le\frac
{E_{k-1}(m\babs{{z_0}}^2)}{E_{n-1}(m\babs{{z_0}}^2)}
+\frac{mr^2}{\sqrt{2\pi}}\Big(\frac{|{z_0}|}{r}\Big)^{2\nu}(c_{k,m})^k
\frac{E_{n-\nu-1}(m|{z_0}|^2)}{E_{n-1}(m|{z_0}|^2)}\\
\le\frac {E_{k-1}(m\babs{{z_0}}^2)}{E_{n-1}(m\babs{{z_0}}^2)}
+\frac{mr^2}{\sqrt{2\pi}}\Big(\frac{|{z_0}|}{r}\Big)^{2\nu}(c_{k,m})^k.
\end{multline}
We would like to show that each of the terms on the right hand side of
\eqref{grr-1'} can be made small by choosing $k$ cleverly.
As for the first term, we appeal to a theorem of Szeg\"o,
\cite{Sz}, Hilfssatz 1, p. 54, which states that
\begin{equation*}E_l(lx)=\frac 1 {\sqrt{2\pi l}}
(\e x)^l \frac x{x-1}\lpar 1+\eps_l(x)\rpar\qquad x>1,
\end{equation*}
where $\eps_l(x)\to 0$ uniformly on compact subintervals of
$(1,\infty)$ as $l\to\infty$. It follows that
\begin{equation*}
\frac{E_{k-1}(m\babs{{z_0}}^2)}{E_{n-1}(m\babs{{z_0}}^2)}=
\sqrt{\frac{n-1}{k-1}}\frac{m|{z_0}|^2-n+1}{m|{z_0}|^2-k+1}
\Big(\frac{\e m|{z_0}|^2}{k-1}\Big)^{k-1}\Big(\frac{\e
m|{z_0}|^2}{n-1}\Big)^{1-n}
\frac{1+\eps_{k-1}(\frac{m|{z_0}|^2}{k-1})}{1+\eps_{n-1}(\frac{m|{z_0}|^2}{n-1})}.
\end{equation*}
Finally, we decide to pick $k$ such that
$$k/m\to\beta$$
as $k,m\to\infty$, where $r^2<\beta<\tau$. We observe that with this
choice of $k$, the above epsilons tend to zero as $k,m,n\to\infty$.
The function
$$y\mapsto(\e/y)^{y},\qquad 0<y\le1$$
is is strictly increasing, so that with
$$y_1=\frac{k-1}{m|{z_0}|^2}\approx\frac{\beta}{|{z_0}|^2},
\quad y_2=\frac{n-1}{m|{z_0}|^2}\approx\frac{\tau}{|{z_0}|^2},$$
we have
$$\frac{(\e/y_1)^{y_1}}{(\e/y_2)^{y_2}}\le\theta<1,$$
where at least for large $k,m,n$, the number $\theta$ may be taken
to be independent of $k,m,n$. It follows that
$$\Big(\frac{\e m|{z_0}|^2}{k-1}\Big)^{k-1}\Big(\frac{\e m|{z_0}|^2}{n-1}\Big)^{1-n}
\le \theta^{-m|{z_0}|^2},$$
so that
\begin{equation*}
\frac{E_{k-1}(m\babs{{z_0}}^2)}{E_{n-1}(m\babs{{z_0}}^2)}\le(1+o(1))
\sqrt{\frac{\tau}{\beta}}\frac{|{z_0}|^2-\tau}{|{z_0}|^2-\beta}
\theta^{-m|{z_0}|^2}\to0
\end{equation*}
exponentially quickly as $k,m,n\to\infty$.

Finally, as for the second term, we observe that the numbers $c_{k,m}$
defined by \eqref{eq-ckm} have the property that
$$c_{k,m}\to \frac{r^2}{\beta}\e^{1-\frac{r^2}{\beta}}<1,$$
as $k,m,n\to\infty$ in the given fashion. In particular, the second
term converges exponentially quickly to $0$. The proof is complete.
\end{proof}

\begin{cor} \label{korp} Let ${z_0}\in\C\setminus\overline{\D}(0;\sqrt{\tau})$, and
let $\omega$ be an open set in $\C$ which contains the circle
$\T(0,\sqrt{\tau})$. Further, let $u$ be an analytic polynomial.
Then
\begin{equation*}\int_{\omega}u(z^{-1})\diff B^{\langle
{z_0}\rangle}_{m,n}(z)\to u(z_0^{-1})\qquad \text{as}\quad
m\to\infty\quad \text{and}\quad  n/m\to\tau.\end{equation*}
\end{cor}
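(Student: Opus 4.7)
The plan is to split the domain $\omega$ into an annular neighborhood of the critical circle $\T(0,\sqrt{\tau})$ and complementary pieces, and to combine Propositions \ref{prop-pv}, \ref{bulk}, and \ref{prop1} to control each piece. By linearity in $u$, it will suffice to treat monomials $u(z)=z^{j}$ with $j\ge 0$, so that $u(z^{-1})=z^{-j}$. Since $\omega$ is open and contains the compact circle, I would pick $r_{1},r_{2}$ with $0<r_{1}<\sqrt{\tau}<r_{2}$ so that the closed annulus $\{r_{1}\le|z|\le r_{2}\}$ is contained in $\omega$; then $\C\setminus\omega\subset\D(0;r_{1})\cup\{|z|>r_{2}\}$, and I would decompose
\[
\int_{\omega}z^{-j}\diff B^{\langle z_{0}\rangle}_{m,n}=\int_{\omega\cap\{|z|>r_{1}\}}z^{-j}\diff B+\int_{\omega\cap\D(0;r_{1})}z^{-j}\diff B.
\]

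The outer piece should be the easier one. Writing it as $\int_{\{|z|>r_{1}\}}z^{-j}\diff B-\int_{\{|z|>r_{1}\}\setminus\omega}z^{-j}\diff B$, the first term equals $\pv\int_{\C}z^{-j}\diff B-\pv\int_{\D(0;r_{1})}z^{-j}\diff B$, which tends to $z_{0}^{-j}-0$ by Propositions \ref{prop-pv} and \ref{bulk}. The second term is over a set contained in $\{|z|>r_{2}\}$, on which the integrand is bounded by $r_{2}^{-j}$ and the $B$-mass tends to $0$ by Proposition \ref{prop1} applied to the open neighborhood $\D(0;r_{2})$ of $\setS_{\tau}=\overline{\D}(0,\sqrt{\tau})$. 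So the outer piece will converge to $z_{0}^{-j}=u(z_{0}^{-1})$.

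The inner piece $\int_{\omega\cap\D(0;r_{1})}z^{-j}\diff B$ will be the main obstacle, since $z^{-j}$ has a pole of order $j$ at the origin, which may lie in $\omega$, and this singularity is not obviously controlled by the concentration statements. My plan is a case analysis based on whether $\omega$ contains a neighborhood of $0$. If it does not, then $|z^{-j}|$ is uniformly bounded on the inner piece while its $B$-mass is dominated by $B^{\langle z_{0}\rangle}_{m,n}(\D(0;r_{1}))$, which tends to $0$ by Proposition \ref{bulk} applied with $u\equiv 1$. Otherwise, I would choose $\delta\in(0,r_{1})$ with $\D(0;\delta)\subset\omega$ and split
\[
\int_{\omega\cap\D(0;r_{1})}z^{-j}\diff B=\pv\int_{\D(0;\delta)}z^{-j}\diff B+\int_{(\omega\cap\D(0;r_{1}))\setminus\D(0;\delta)}z^{-j}\diff B;
\]
the first summand tends to $0$ by Proposition \ref{bulk} (the rotational symmetry of the disk $\D(0;\delta)$ tames the pole through principal-value cancellation), while in the second summand $|z^{-j}|\le\delta^{-j}$ is bounded and the mass tends to $0$ by Proposition \ref{bulk}. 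Combining the inner and outer pieces will give the desired limit $u(z_{0}^{-1})$. The technical heart of the proof, as indicated, is resolving the singularity of $z^{-j}$ at the origin, which is achieved by exploiting the angular cancellation embodied in Proposition \ref{bulk}.
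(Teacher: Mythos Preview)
Your argument is correct and matches the paper's approach, which is stated as a one-line ``This follows from propositions \ref{bulk} and \ref{prop1}'' (with Proposition \ref{prop-pv} needed implicitly, exactly as you use it); you have simply supplied the details the paper omits. Two small technical remarks: in your case $0\notin\omega$, the claim that $|z^{-j}|$ is uniformly bounded on $\omega\cap\D(0;r_1)$ really needs $0\notin\overline{\omega}$ (if $0\in\overline{\omega}\setminus\omega$ the integral $\int_\omega z^{-j}\,\diff B$ may itself diverge, so this pathological case is excluded by the statement anyway), and when you invoke Proposition \ref{prop1} under the hypothesis $n/m\to\tau$ rather than $n\le m\tau+1$, you should apply it with a parameter $\tau'\in(\tau,r_2^2)$ so that eventually $n\le m\tau'+1$ and $\D(0;r_2)$ is still a neighborhood of $\setS_{\tau'}$.
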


\begin{proof} This follows from propositions \ref{bulk}
and \ref{prop1}.
\end{proof}

\subsection{The proof of Theorem \ref{th5}.}
Let ${\calH}_\tau$ be the class of continuous functions $\C^*\to\C$
which are harmonic on $\C^*\setminus\setS_\tau$. For a function
$f\in{\calC}_b(\C)$ we write $\wt{f}$ for the unique function of
class ${\calH}_\tau$ which coincides with $f$ on
$\D(0,\sqrt{\tau})$. We must show that \begin{equation*}\int_\C
f(z)\diff B^{\langle {z_0}\rangle}_{m,n}(z)\to \widetilde
f({z_0})\end{equation*} as $m\to\infty$ and $n/m\to\tau$. See e.g.
\cite{GM}, p. 90.

Convolving with the F\'ejer kernel, we see that $f$ may be uniformly
approximated by functions which on a neighborhood $\omega$ of
$\T(0,\sqrt{\tau})$ are of the form $u(z^{-1})$, with $u$ a harmonic
polynomial. We may therefore w.l.o.g. suppose that $f$ itself is of
this form, i.e. $f(z)=u(z^{-1})$ when $z\in \omega$.
 Thus $f(z)=\wt{f}(z)=u(z^{-1})$ on
$\omega$. By Prop. \ref{prop1}, \begin{equation*}\int_\C
(f(z)-\wt{f}(z)) \diff B^{\langle {z_0}\rangle}_{m,n}(z)\to
0,\end{equation*} as $m\to\infty$ and $n/m\to\tau$. Moreover, Cor.
\ref{korp} gives that
\begin{equation*}\int_\C \wt{f}(z)\diff B^{\langle
{z_0}\rangle}_{m,n}(z)=\int_\C u(z^{-1})\diff B^{\langle
{z_0}\rangle}_{m,n}(z)\to u(z_0^{-1})=\wt{f}({z_0}),\end{equation*}
as $m\to\infty$ and $n/m\to\tau$. \hfill $\qed$

\end{document}